\newcommand{\myexp}{\,\textnormal{\sc exp}\,}
\newcommand{\C}{\mathbb{C}}
\newcommand{\A}{\mathcal{A}}
\newcommand{\z}{\mathbb{Z}}
\renewcommand{\a}{\mathbf{A}}
\newcommand{\f}{\mathbb{F}}
\newcommand{\g}{\mathbf{G}}
\renewcommand{\O}{\mathcal{O}}
\newcommand{\h}{\mathcal{H}}
\renewcommand{\k}{\mathcal{K}}
\renewcommand{\b}{\mathcal{B}}
\newcommand{\Int}{\operatorname{Int}}
\newcommand{\Inn}{\operatorname{Inn}}
\newcommand{\mm}{\times}
\newcommand{\ad}{{}}
\newtheoremstyle{pedro}{}{}{\itshape}{}{\sc}{~--}{ }{\thmname{#1}\thmnumber{ #2}\thmnote{ (#3)}}
\newtheoremstyle{pedrodef}{}{}{}{}{\sc}{~--}{ }{\thmname{#1}\thmnumber{ #2}\thmnote{ (#3)}}
\theoremstyle{pedro}
\newtheorem{lem}{Lemma}[section]
\newtheorem{thm}[lem]{Theorem}
\newtheorem{prop}[lem]{Proposition}
\newtheorem{coro}[lem]{Corollary}
\theoremstyle{remark}
\newtheorem{rmk}[lem]{Remark}
\theoremstyle{pedrodef}
\newtheorem{ex}[lem]{Example}
\title{Examples of quantum algebra in positive characteristic}
\author{Pierre Guillot}
\address{
Universit\'{e} de Strasbourg \& CNRS\\
Institut de Recherche Math\'{e}matique Avanc\'{e}e\\
7~Rue Ren\'{e} Descartes\\
67084 Strasbourg, France}
\email{guillot@math.unistra.fr}
\numberwithin{equation}{section}
\begin{document}

\maketitle

\begin{abstract}

There have been few examples of computations of Sweedler cohomology,
or its generalization in low degrees known as lazy cohomology, for
Hopf algebras of positive characteristic. In this paper we first
provide a detailed calculation of the Sweedler cohomology of the
algebra of functions on$\left(\z/2 \right)^r$, in all degrees, over a
field of characteristic~$2$. Here the result is strikingly different
from the characteristic zero analog.

Then we show that there is a variant in characteristic~$p$ of the
result obtained by Kassel and the author in characteristic zero, which
provides a near-complete calculation of the second lazy cohomology
group in the case of function algebras over a finite group; in
positive characteristic the statement is, rather surprisingly,
simpler.

{\tiny {\bfseries Keywords:} Hopf algebras; Sweedler cohomology; lazy cohomology; Drinfeld twists; $R$-matrices.   }

\end{abstract}


\section{Introduction}

Sweedler cohomology was defined in~\cite{sweedler}. Given a field~$k$,
a {\em cocommutative} Hopf algebra~$\h$, and an~$\h$-module
algebra~$A$, Sweedler defines cohomology groups which we will
write~$H^n_{sw}(\h, A)$, for~$n \ge 1$. In fact we shall only discuss
the case where~$A= k$, the base field, viewed as an~$\h$-module via
the augmentation, and write simply~$H^n_{sw}(\h)$. 

When~$\h = k[G]$, the group algebra of the finite group~$G$, one
has~$H^n_{sw}(\h) = H^n(G, k^\times)$; and if~$\h = U(\mathfrak{g})$,
the universal enveloping algebra of the Lie algebra~$\mathfrak{g}$,
one has~$H^n_{sw}(U(\mathfrak{g})) = H^n(\mathfrak{g}, k)$. The first
virtue of Sweedler cohomology is thus to unify these two classical
cohomology theories. 

There are few other examples of Hopf algebras for which the Sweedler
cohomology is known. An easy way to construct Hopf algebras is, of
course, to consider algebras of functions on groups. In the simplest
case thus, we may take a finite group~$G$ and consider the
algebra~$\h= \O_k(G)$ of~$k$-valued functions on~$G$. However, $\h$ is
only cocommutative when~$G$ is abelian, which severely restricts the
choices. What is worse, in the familiar case when~$k$ is the field of
complex numbers~$\C$, one can use the discrete Fourier transform to
get an isomorphism~$\O_k(G) \cong k[\widehat{G}]$, where~$\widehat{G}$
is the Pontryagin dual of~$G$; we are thus reduced to the group
algebra case and will not get anything new.

However in positive characteristic, the Fourier transform is not
available, and the Hopf algebra~$\O_k(G)$ is genuinely different from a
group algebra. Our main result in this paper involves the elementary
abelian~$2$-groups, that is groups of the form~$G=
\left(\z/2\right)^r$. For these, over any field~$k$ of characteristic
zero or~$p > 2$, we have~$\O_k(G) \cong k[G]$, so~$H^n_{sw}(\O_k(G)) =
H^n(G, k^\times)$. For example for~$G = \z/2$, the latter is~$k^\times
/ (k^\times)^2$ when~$n$ is even, and~$\{ \pm 1 \}$ when~$n$ is
odd. When~$k$ has characteristic~$2$, by contrast, we obtain the
following result (Theorem~\ref{thm-main-elemab} in the text):

\begin{thm} \label{thm-intro-zedmodtwo}
Let~$k$ be a field of characteristic~$2$. The Sweedler cohomology
of~$\O_k(\left(\z/2 \right)^r)$ is given by
\[ H^n_{sw} (\O_k(\left(\z/2 \right)^r)) = \left\{ \begin{array}{l}
0 ~\textnormal{for}~ n\ge 3 ~\textnormal{or}~ n=0 \, , \\
~\\
\left(\z/2 \right)^r ~\textnormal{for}~ n=1 \, , \\
~\\
\left( k / \{ x + x^2 : x \in k \} \right)^{\oplus r}
~\textnormal{for}~ n=2 \, . 
\end{array}\right.  \]
When~$k$ is an algebraically closed field, we have in particular $H^2_{sw}(\O_k(\left(\z/2 \right)^r)) = 0$.
\end{thm}

In fact for any ring of characteristic~$2$, the cohomology groups
vanish in degrees greater than~$2$.

As far as non-cocommutative Hopf algebras go, there is at least a
definition in low-degrees of the so-called ``lazy cohomology
groups''~$H^n_\ell(\h)$, for~$n= 1, 2$, with no restriction on the
Hopf algebra~$\h$; of course these agree with Sweedler's cohomology
groups when~$\h$ happens to be cocommutative. Lazy cohomology was
defined originally by Schauenburg, and systematically explored
in~\cite{bichon}. This opens up the exploration of~$H^n_\ell(\O_k(G))$
for any finite group~$G$, mostly for~$n=2$: there is not much mystery
held in the case~$n=1$, since one has~$H^1_\ell(\O_k(G))=
\mathcal{Z}(G)$, the centre of~$G$.

 The groups~$H^2_\ell(\O_k(G))$ were investigated by Kassel and the author
 in~\cite{kassel} in characteristic~$0$. Our main result is as
 follows. Let~$\b(G)$ denote the set of all pairs~$(A, b)$ where~$A$
 is an abelian, normal subgroup of~$G$, and~$b$ is an alternating,
 non-degenerate, $G$-invariant bilinear form~$\widehat{A} \times
 \widehat{A} \to k^\times$. The point is perhaps that~$\b(G)$ is easy
 to describe in finite time. We have then constructed a map of sets
\[ \Theta \colon H^2_\ell(\O_k(G)) \longrightarrow \b(G)  \]
with good properties. In particular, when~$k$ is algebraically closed,
the fibres of~$\Theta $ are finite (and explicitly described), which
proves that~$H^2_\ell(\O_k(G))$ is finite in this situation. It is easy to
use the map~$\Theta $ to compute~$H^2_\ell(\O_k(G))$ in many cases, and
we have thus been able to show that this group can be arbitrary large,
and possibly even non-commutative. 

In this paper, we extend the results of {\em loc.\ cit}.\ to positive
characteristic. It turns out that the result is easier in this
case. The following is made precise in
Theorem~\ref{thm-main-guillotkassel} in the text. 

\begin{thm} \label{thm-intro-guillotkassel}
When~$k$ has characteristic~$p$, the map 
\[ \Theta \colon H^2_\ell(\O_k(G)) \longrightarrow \b(G)  \]
exists with the same formal properties as in characteristic~$0$,
except that the subgroups~$A$ of order divisible by~$p$ have been
excluded from the definition of~$\b(G)$.

In particular, this map is surjective when~$k$ has characteristic~$2$.
\end{thm}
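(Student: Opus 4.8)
The plan is to extend the construction of~$\Theta$ from~\cite{kassel} essentially verbatim, checking at each step that no argument secretly used that~$\operatorname{char} k = 0$, and then to track down precisely which subgroups~$A$ survive the positive-characteristic analysis. First I would recall that a class in~$H^2_\ell(\O_k(G))$ is represented by a lazy $2$-cocycle~$\sigma$ on~$\O_k(G)$, equivalently by a Drinfeld twist~$J$ of the dual Hopf algebra~$k[G]$ up to the appropriate equivalence; via the standard dictionary a twist of~$k[G]$ is a map~$J\colon G\times G\to k^\times$ that need not be a cocycle but whose coboundary is invariant in a suitable sense, and the associated object is the pair~$(A,b)$ where~$A$ is the subgroup on which~$J$ is ``supported'' and~$b$ is the commutator pairing~$b(x,y) = J(x,y)/J(y,x)$ descended to~$\widehat A\times\widehat A$. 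The alternating, non-degenerate and $G$-invariance properties of~$b$ come out of the same formal manipulations as in characteristic~$0$, so the content of the theorem is really the constraint on~$A$.

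The key new point is the following: when~$k$ has characteristic~$p$ and~$A$ is an abelian group whose order is divisible by~$p$, there is no alternating non-degenerate bilinear form~$\widehat A\times\widehat A\to k^\times$ at all. Indeed~$\widehat A$ contains an element of order~$p$, and the values of such a form lie in the subgroup~$\mu_n(k)$ of roots of unity where~$n$ is the exponent of~$A$; but in characteristic~$p$ the group~$\mu_{p}(k)$ is trivial, so the form degenerates on the $p$-torsion of~$\widehat A$. Hence the pairs~$(A,b)$ with~$p\mid |A|$ simply do not exist, which both explains why they are excluded from~$\b(G)$ and, going the other way, shows that the twist-theoretic analysis automatically produces an~$A$ of order prime to~$p$: a twist supported on such an~$A$ would give rise to a nondegenerate alternating form, contradiction, so the support must lie in a $p'$-subgroup. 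I would phrase this cleanly as a lemma about~$\mu_n(k)$ and then feed it into the construction.

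For the ``formal properties'' clause I would go through the list from~\cite{kassel}—naturality in~$G$, the description of the fibres of~$\Theta$, finiteness of~$H^2_\ell(\O_k(G))$ over an algebraically closed field—and note that each proof there is insensitive to the characteristic once we know that the relevant groups of roots of unity~$\mu_n(k)$ with~$p\nmid n$ behave exactly as over~$\C$ (cyclic of order~$n$ when~$k$ is algebraically closed). The fibre computation in particular is a statement about $2$-cocycles valued in~$\mu_n(k)$ for $n$ prime to~$p$, so it transcribes directly. The one place demanding genuine care is making sure that the map~$\Theta$ is still well-defined, i.e.\ that equivalent lazy cocycles yield the same~$(A,b)$; here I would re-examine the equivalence relation on twists and confirm that gauge transformations by maps~$G\to k^\times$ do not change the support subgroup or the commutator pairing modulo the relevant identifications—this should be where most of the bookkeeping lives.

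Finally, for the surjectivity assertion in characteristic~$2$: when~$p=2$, \emph{every} pair~$(A,b)\in\b(G)$ must be hit. Given such a pair, $A$ has odd order, so~$\widehat A$ is an odd-order abelian group and an alternating nondegenerate form on it forces~$\widehat A\cong V\times V$ (a ``symplectic'' decomposition), and one can write down an explicit twist—a bilinear form~$J$ on~$A$ realizing~$b$ as its antisymmetrization—which is a genuine $2$-cochain whose class maps to~$(A,b)$. The $G$-invariance of~$b$ is exactly what is needed to check that the resulting lazy cocycle on~$\O_k(G)$ is well-defined, i.e.\ lazy. I expect the main obstacle to be precisely this construction of a preimage: one needs an explicit bilinear~$J$ on the odd-order group~$A$ with prescribed antisymmetrization and compatible with the $G$-action, and verifying that it assembles into a lazy cocycle on the full~$\O_k(G)$ (as opposed to just on the sub-object indexed by~$A$) requires unwinding the lazy-cohomology formalism carefully; I would isolate this as the technical heart of the proof, with the root-of-unity lemma doing the conceptual work everywhere else.
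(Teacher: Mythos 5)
There is a genuine gap at the exact point the paper identifies as the essential new ingredient. Your plan reduces the positive-characteristic case to the claim that ``a twist supported on an $A$ with $p\mid |A|$ would give rise to a nondegenerate alternating form on $\widehat A$, contradiction''. But the correspondence between twists/$R$-matrices supported in $k[A]\otimes k[A]$ and bilinear forms $\widehat A\times\widehat A\to k^\times$ is exactly the discrete Fourier transform, and that isomorphism $k[A]\cong \O_k(\widehat A)$ is only available when $|A|$ is prime to $p$ (and $k$ algebraically closed). So when $p$ divides $|A|$ there is no form attached to the twist at all, and no contradiction can be extracted this way; the argument is circular. (A further sign that the root-of-unity lemma is aimed at the wrong object: with the paper's convention $\widehat A=Hom(A,k^\times)$, in characteristic $p$ the dual has \emph{no} $p$-torsion -- the $p$-part of $A$ is simply invisible to it -- so ``the form degenerates on the $p$-torsion of $\widehat A$'' is not a statement one can even formulate.)

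What is actually needed, and what the paper supplies as Proposition~\ref{prop-R-matrix-char-p}, is an honest structural result about $R$-matrices: in characteristic $p$, any $R$-matrix on $k[A]$ lies in $k[A']\otimes k[A']$ where $A'$ is the prime-to-$p$ part, and in particular the only $R$-matrix on the group algebra of an abelian $p$-group is $1\otimes 1$. The paper proves this by Cartier duality: an $R$-matrix amounts to a morphism of group schemes $Spec(k[A])\to Spec(\O(A))$, and since $k[A_p]$ has no nontrivial idempotents (it is local in characteristic $p$), every algebra map $\O(A)\to k[A]$ is ``constant'', so only the trivial one respects the group structure. Only with this in hand does the minimal $A$ with $R_F\in k[A]\otimes k[A]$ automatically have order prime to $p$, making $\Theta$ well defined; after that, your remaining steps (fibres, $G$-invariance, and the explicit symplectic construction of a preimage for surjectivity when all relevant $A$ have odd order, hence always when $p=2$) do go through as in characteristic $0$, which is precisely how the paper concludes. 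Also note a smaller slip in your setup: a twist of $k[G]$ is an invertible element of $k[G]\otimes k[G]$, not a function $G\times G\to k^\times$; that description belongs to $\O_k(G)$, or to $k[A]$ only after the Fourier transform, i.e.\ again only in the prime-to-$p$ situation.
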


\medskip

\noindent {\em Organization of the paper}. Section~\ref{sec-defs}
recalls all the relevant definitions. The strategy for the
computations of Theorem~\ref{thm-intro-zedmodtwo} is given in
Section~\ref{sec-homology}, together with a few tools from homological
algebra. The computation itself is carried out in
Section~\ref{sec-zedmodtwo} for the case~$r=1$, which is much less
technical, and in Section~\ref{sec-generic-case} for the general
case. The last Section is devoted to
Theorem~\ref{thm-intro-guillotkassel}.

\medskip

\noindent {\em Acknowledgements.} The author wishes to thank the
referee for pointing out Theorem 4.8 in~\cite{bichon}.


\section{Definitions} \label{sec-defs}

\noindent {\em Notation.} Throughout the paper, we
write~$R_\mm$ for the multiplicative group of units in the ring~$R$.

\medskip

We recall all the relevant definitions. The Appendix contains some
material on cosimplicial objects, should the reader feel the need to
review this topic.

\subsection{Sweedler cohomology}
(See~\cite{sweedler}.) Let~$\h$ be a Hopf algebra over the
field~$k$. For each integer~$n\ge 1$, we form the
coalgebra~$\h^{\otimes n}$ and define faces and degeneracies by the
following formulae:
\begin{equation*}\label{face}
d_i (x_0 \otimes \cdots \otimes x_n)= 
\begin{cases}
  x_0 \otimes \cdots \otimes x_i x_{i+1} \otimes \cdots \otimes x_n &
  \text{for $i < n$} \, ,\\
  x_0 \otimes \cdots \otimes x_{n-1} \varepsilon (x_n)  &
  \text{for $i=n$} \, ,

\end{cases}
\end{equation*}
and 
\begin{equation*}
s_i (x_0 \otimes \cdots \otimes x_n) = x_0 \otimes \cdots \otimes x_i \otimes 1
\otimes x_{i+1} \otimes \cdots \otimes x_n \, . 
\end{equation*}

We are thus in the presence of a simplicial coalgebra. The
monoid~$Hom(\h^{\otimes n}, k)$, equipped with the ``convolution
product'', contains the group~$R^n(\h) = Reg(\h^{\otimes n}, k)$
(comprised of all the invertible elements in~$Hom(\h^{\otimes n},
k)$). Since~$Reg(-, k)$ is a functor, we obtain a cosimplicial
group~$R^*(\h)$ (sometimes written~$R^*$ for short in what follows). 

Whenever~$\h$ is cocommutative, $R^*$ is a cosimplicial abelian group.
Thus it gives rise to a cochain complex~$(R^*, d)$ whose differential
is
\[ d = \sum_{i=0}^{n+1} (-1)^i d^i  \]
in additive notation, or (as we shall also encounter it) 
\[ d = \prod_{i=0}^{n+1} (d^i)^{(-1)^i} = d^0 (d^1)^{-1} d^2
(d^3)^{-1} \cdots   \]
in multiplicative notation.

The cohomology~$H^*(R^*, d)$ is by definition the Sweedler cohomology of
the cocommutative Hopf algebra~$\h$, denoted by~$H^*_{sw}(\h)$.

\subsection{Twist cohomology \& Finite dimensional algebras} \label{subsec-defs-twist}

Now suppose that~$\h$ is a finite-dimensional Hopf algebra. Then its
dual~$\k=\h^*$ is again a Hopf algebra. In this situation, the
cosimplicial group associated to~$\h$ by Sweedler's method may be
described purely in terms of~$\k$, and is sometimes easier to
understand when we do so. 

In fact, let us start with any Hopf algebra~$\k$ at all. We may
construct a cosimplicial group directly as follows. Let~$A^n_\mm(\k) =
(\k^{\otimes n})_\times$ and let the cofaces and codegeneracies be
defined by
\begin{equation*}\label{face}
d^i = 
\begin{cases}
1 \otimes id^{\otimes n} & \text{for $i=0$} \, ,\\
id^{\otimes (i-1)} \otimes \Delta \otimes id^{\otimes (n-i)} & 
\text{for $i=1, \ldots, n-1$} \, ,\\
id^{\otimes n} \otimes 1 & \text{for $i=n$} \, ,
\end{cases}
\end{equation*}
and 
\begin{equation*}
s^i =
\begin{cases}
\varepsilon \otimes id^{\otimes (n-1)} & \text{for $i=0$} \, ,\\
id^{\otimes (i-1)} \otimes \varepsilon \otimes id^{\otimes (n-i)} & 
\text{for $i=1, \ldots, n-1$} \, ,\\
id^{\otimes (n-1)} \otimes \varepsilon & \text{for $i=n$} \, .
\end{cases}
\end{equation*}

When~$\k$ is commutative, then~$A^*_\mm(\k)$ is a cosimplicial abelian
group, giving rise to a cochain complex~$(A^*_\mm, d)$ in the usual
way. Its cohomology~$H^*(A^*_\mm, d)$ is what we call the {\em twist
  cohomology} of~$\k$, written~$H^*_{tw}(\k)$. This terminology comes
from the fact, easily checked, that an element of~$A^2_\mm(\k) =
(\k\otimes \k)_\times$ is in the kernel of~$d$ if and only if it is a
twist in the sense of Drinfeld (see equation ($\dagger$) below).

Coming back to the case when~$\k = \h^*$ for a finite-dimensional Hopf
algebra~$\h$, it is straightforward to check that~$R^*(\h)$ can be
identified with~$A^*_\mm(\k)$ (Theorem~1.10 and its proof in~\cite{kassel}
may help). 

In this paper we are chiefly interested in computing
with~$R^*(\O_k(G))$, where~$\O_k(G)$ denotes the algebra of functions on
the finite group~$G$. By the above, this is the same
as~$A^*_\mm(k[G])$, writing~$k[G]$ for the group algebra of~$G$. It
turns out to be easier to work with the latter.

The following remarks will be useful in the sequel. As the notation
suggest, it is possible to define another cosimplicial abelian
group~$A^*(\k)$ by simply taking~$A^n(\k) = \k^{\otimes n}$, the
vector space underlying~$\k^{\otimes n}$. The cofaces and
codegeneracies are exactly the same as above (they are really maps of
algebras), although the differential of the corresponding cochain
complex is now the alternating {\em sum} of the cofaces rather than
the alternating {\em product}. Still, we denote it by~$d$. Let us
explain why the cohomology of~$(A^*, d)$ must in fact be very simple.

We consider the case~$\k= k[G]$ for a finite group~$G$, and
write~$A^*$ for~$A^*(\k)$. Note that~$(A^*_\ad, d)$ does not depend on
the group structure on~$G$, and we expect its cohomology to be
trivial. Indeed, let~$X$ denote~$G$ viewed as a pointed set only. There
is an obvious cosimplicial set which in degree~$n$ is~$X^n$ (cartesian
product of~$n$ copies of~$X$), and such that the cosimplicial
group~$A^*_\ad$ is obtained by applying the functor ``free~$k$-vector
space'' to~$X^*$. In this sort of situation we may apply the following
Lemma, which we prove in the Appendix.

\begin{lem} \label{lem-goodwillie}
  Let~$X^*$ be any cosimplicial set, let~$k$ be any ring, and
  let~$k[X]^*$ be the cosimplicial~$k$-module obtained by taking~$k[X]^n$ to
  be the free $k$-module on~$X^n$. Then~$H^n(k[X]^*, d) = 0$ for~$n >
  0$.

\end{lem}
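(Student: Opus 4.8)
The plan is to recognise the cochain complex $(k[X]^*, d)$ as a \emph{representable} functor of the cosimplicial set $X$ and then to invoke the method of acyclic models, which reduces the vanishing to a single family of especially simple cosimplicial sets. First I would introduce, for each $m \ge 0$, the corepresentable cosimplicial set $\Delta^m := \operatorname{Hom}_\Delta([m], -)$, whose value in cosimplicial degree $n$ is $\operatorname{Hom}_\Delta([m],[n])$ and whose cofaces and codegeneracies are given by post-composition. The Yoneda lemma supplies, for every cosimplicial set $X$, a natural bijection $X^n \cong \operatorname{Nat}(\Delta^n, X)$; hence the degree-$n$ term of our complex, regarded as a functor of $X$, is the free $k$-module on the set of maps out of the model $\Delta^n$. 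In other words $(k[X]^*, d)$ is, in each cohomological degree, representable on the family of models $\{\Delta^m\}_{m \ge 0}$. This is precisely the structural input that acyclic models demands, and it is the feature that has no analogue for simplicial sets.

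Next I would verify that the models are acyclic, that is, $H^n(k[\Delta^m]^*, d) = 0$ for $n > 0$ (in fact these complexes are contractible, as one checks by hand in low degrees). Here — and this is the only place where the argument is permitted to use a basepoint — the corepresentable $\Delta^m$ carries a canonical one, coming from the constant map $[m] \to [n]$; more conceptually, the contraction is produced from the cone structure given by ordinal sum with a point, the endofunctor $[0] \star (-) \colon \Delta \to \Delta$, $[n] \mapsto [n+1]$. Applying $k[-]$ turns this cosimplicial contraction of $\Delta^m$ onto its basepoint into a contracting homotopy of $(k[\Delta^m]^*, d)$, whence the acyclicity of the models. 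Pinning down this homotopy explicitly is routine but requires some care, since a naive single post-composition with a codegeneracy fails one of the required identities; everything here is defined by maps of sets, so the coefficient ring $k$ plays no role.

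Finally I would feed these two facts into the acyclic models machine: a functor to cochain complexes that is representable in each degree and acyclic on the models is itself acyclic in positive degrees on every object, naturally in $X$. Applied to $(k[X]^*, d)$ this yields $H^n(k[X]^*, d) = 0$ for $n > 0$ and \emph{arbitrary} cosimplicial sets $X$. The hard part is exactly this last passage, from the based, corepresentable case to a general unbased $X$: one cannot simply choose a basepoint in an arbitrary $X$ and mimic the contraction, and indeed no uniform, formula-in-the-structure-maps contracting homotopy can exist, because it would dualise to prove the corresponding \emph{simplicial} statement, which is false (the homology of $k[Y]$ recovers the homology of the realisation $|Y|$). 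The asymmetry that saves the cosimplicial case is precisely the representability recorded in the first step, and checking that the hypotheses of acyclic models hold in the exact form required is where the real work lies.
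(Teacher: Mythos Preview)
Your proposal has a genuine gap at the crucial step. You invoke ``the acyclic models machine'' to pass from acyclicity of $k[\Delta^m]^*$ to acyclicity of $k[X]^*$ for arbitrary $X$, asserting that ``a functor to cochain complexes that is representable in each degree and acyclic on the models is itself acyclic in positive degrees on every object.'' But this is not a theorem: exactly the same hypotheses hold in the \emph{simplicial} setting --- there $Y_n \cong \operatorname{Hom}(\Delta^n, Y)$ is representable by Yoneda, and $k[\Delta^m]_*$ is acyclic since $|\Delta^m|$ is contractible --- yet the conclusion fails, as $H_*(k[Y]_*) \cong H_*(|Y|;k)$. You yourself flag this obstruction, but then attribute the cosimplicial rescue to ``representability,'' which is present on both sides and so cannot be the distinguishing feature. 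The classical acyclic models theorem produces natural chain maps and homotopies \emph{between} given functors; it does not manufacture acyclicity of a single functor from acyclicity on models alone.

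The paper supplies the genuinely asymmetric ingredient you are missing. Goodwillie's lemma in the Appendix shows that every cosimplicial set decomposes, \emph{as a semi-cosimplicial set}, into a disjoint union of the pieces $F_m$ (where $F_m^n$ is the set of injections $[m]\hookrightarrow[n]$ in $\Delta$), indexed by its ``roots.'' This relies on the injectivity of cofaces (a consequence of $s^i d^i = \mathrm{id}$) and has no simplicial analogue, since simplicial face maps need not be injective. From this one gets $k[X]^* \cong \bigoplus_r k[F_{m_r}]^*$ as cochain complexes, and cohomology commutes with the direct sum. The acyclicity of each $k[F_m]^*$ is then extracted from one explicit family of examples (the cosimplicial sets $n\mapsto S^n$ for $S$ finite pointed, whose $k$-linear dual is recognised as a Hochschild complex). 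Your observation that $k[\Delta^m]^*$ is contractible is correct and could replace this last step, since $\Delta^m$ itself decomposes semi-cosimplicially as a coproduct of $F_{m'}$'s for $m'\le m$; but the decomposition lemma is still what carries you from the models to a general~$X$.
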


In particular, the cohomology of~$A^*$ indeed vanishes (even in
degree~$0$, in this case).

\subsection{Lazy cohomology}

When~$\h$ is not cocommutative, Sweedler's cohomology is not
defined. However, there is a general definition of low-dimensional
groups~$H^i_\ell(\h)$ for~$i= 1, 2$, called the {\em lazy} cohomology
groups of~$\h$, for any Hopf algebra~$\h$: this definition is
originally due to Schauenburg and is systematically explored
in~\cite{bichon}. Of course when~$\h$ happens to be cocommutative,
then~$H^i_\ell(\h) = H^i_{sw}(\h)$. This is perfectly analogous to the
construction of the non-abelian~$H^1$ in Galois cohomology -- note
that~$H^2_\ell(\h)$ may be non-commutative (cf~\cite{kassel}).

When~$\h$ is finite-dimensional, there is again a description
of~$H^i_\ell(\h)$ in terms of the dual Hopf algebra~$\k$. Since this
is the case of interest for us, we only give the details of the
definition in this particular situation (using results
from~\cite{kassel}, \S1). Quite simply, $H^1_\ell(\h)$ is the
(multiplicative) group of central group-like elements in~$\k$. The
group~$H^2_\ell(\h)$ is defined as a quotient. Consider first the
group~$Z^2$ of all invertible elements~$F\in\k \otimes \k$ satisfying 
\[ \Delta (a) F = F \Delta (a)  \]
(here~$\Delta $ is the diagonal of~$\k$ -- one says that~$F$ is
invariant), and 
\[ (F\otimes 1) (\Delta \otimes id)(F) = (1 \otimes F) (id \otimes
\Delta ) (F) \tag{$\dagger$} \]
(which says that~$F$ is a Drinfeld twist). The group~$Z^2$ contains
the group~$B^2$ of so-called trivial twists, that is elements of the
form~$F = (a\otimes a) \Delta (a^{-1})$ for~$a$ central
in~$\k$. Then~$H^2_\ell(\h) = Z^2 / B^2$.



\section{Homological preliminaries} \label{sec-homology}

This section prepares the ground for the next two.

\subsection{Strategy} \label{subsec-strategy}

It is classical that the cohomology of any abelian cosimplicial
group~$A^*$ can be computed by restricting attention to the
``normalized'' cocycles, that is those cocycles for which all the
codegeneracies~$s^*$ vanish. In this paper we shall focus our
attention on the last codegeneracy ($s^{n-1}$ on~$A^n$). As we shall
explain at length in this section, the cocycles for which~$s^{n-1}$
vanish can also be used to compute the cohomology.

The reason for paying special attention to this map is the following.
Put~$A^n= k[G^n]$, where~$G$ is a finite group. The cosimplicial
group we are interested in is~$A_\mm^n= k[G^n]_\mm$ as
in~\S\ref{subsec-defs-twist}. Thus~$A_\mm^n = R[G]_\mm$ for~$R=
k[G^{n-1}]$, and the map~$s^{n-1} \colon R[G]_\mm \to R_\mm$ is simply the
augmentation. So the ``unit spheres'' $S(A^n)$, comprised of those
elements of augmentation~$1$, can be taken to compute the
cohomology. In turn, this is useful because in positive
characteristic~$S(R[G])$ can sometimes be very simple, and indeed
isomorphic to a sum of several copies of the abelian group
underlying~$R$. In the sequel we shall prove a much more precise
statement (Theorem~\ref{thm-decomposition-exp}), but for the present
discussion let us be content with the following.

\begin{lem}
Let~$G= (\z/2)^r$, and let~$R$ be any commutative ring of
characteristic~$2$. Put 
\[ S(R[G]) = \{ x \in R[G] : \varepsilon (x) = 1 \} \, ,   \]
where~$\varepsilon $ is the augmentation. Then there is an isomorphism 
\[ S(R[G]) \cong R^{\oplus 2^r - 1} \, .   \]
\end{lem}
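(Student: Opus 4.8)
The plan is to exploit the fact that $R[G]$ for $G = (\z/2)^r$ is a truncated polynomial algebra, and that in characteristic $2$ the augmentation-one elements are governed by the additive structure. Write $G = (\z/2)^r$ multiplicatively with generators $g_1, \dots, g_r$, and set $t_i = g_i - 1 \in R[G]$. Since $g_i^2 = 1$ and $\mathrm{char}(R) = 2$, we have $t_i^2 = g_i^2 - 2g_i + 1 = g_i^2 + 1 = 0$; and the $t_i$ commute. Thus $R[G] \cong R[t_1, \dots, t_r]/(t_1^2, \dots, t_r^2)$ as an $R$-algebra, with $R$-basis the squarefree monomials $t_S := \prod_{i \in S} t_i$ for $S \subseteq \{1, \dots, r\}$. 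Under this identification the augmentation $\varepsilon$ sends each $g_i \mapsto 1$, hence $t_i \mapsto 0$, so $\varepsilon$ is the projection onto the coefficient of $t_\emptyset = 1$. Therefore
\[
S(R[G]) = \Bigl\{\, 1 + \sum_{\emptyset \neq S \subseteq \{1,\dots,r\}} a_S\, t_S \ :\ a_S \in R \,\Bigr\},
\]
which is a coset of the ideal $I = \ker \varepsilon$, and as a set is in bijection with $I \cong R^{\oplus 2^r - 1}$ (there are $2^r - 1$ nonempty subsets $S$).

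The real content is that $S(R[G])$ is a \emph{subgroup} of the group of units $R[G]_\mm$ — every such element is automatically invertible, since $I$ is a nilpotent ideal (it is generated by the square-zero, commuting elements $t_i$, so $I^{r+1} = 0$), whence $1 + x$ is a unit for $x \in I$ — and that this group is isomorphic, as an abelian group, to the \emph{additive} group $R^{\oplus 2^r - 1}$. The natural candidate map is the truncated logarithm or, more robustly in characteristic $2$, to go the other way: one checks that the exponential-type map, or simply a direct filtration argument, identifies the multiplicative group $1 + I$ with the additive group $I$. The cleanest approach is to filter $1 + I$ by the subgroups $1 + I^j$ and observe that each successive quotient $(1+I^j)/(1+I^{j+1}) \cong I^j/I^{j+1}$ is an elementary abelian $2$-group (killed by $2$ since $(1+x)^2 = 1 + x^2 \in 1 + I^{j+1}$ when $x \in I^j$, using $\mathrm{char} = 2$ and $x^2 \in I^{2j} \subseteq I^{j+1}$ once $j \geq 1$). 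So $1 + I$ is a $2$-group of exponent dividing... well, not quite exponent $2$, but it is a successive extension of $R$-modules.

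Here is where I expect the main obstacle, and how to handle it: the group $1 + I$ need not have exponent $2$ (e.g. $(1 + t_1)(1+t_2) = 1 + t_1 + t_2 + t_1 t_2$, and squaring gives $1 + t_1^2 + t_2^2 + t_1^2 t_2^2 = 1$, so actually that one squares to $1$ — but in general with $R$-coefficients $(1 + a t_1 t_2)\cdots$ interactions can be subtler). The honest fix is to produce an explicit isomorphism rather than argue abstractly. I would define $\Phi \colon R^{\oplus 2^r-1} \to S(R[G])$ by $\Phi\bigl((a_S)_S\bigr) = \prod_{\emptyset \neq S} (1 + a_S t_S)$, where the product is taken in some fixed order (say by increasing $|S|$, then lexicographically). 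Because $t_S t_{S'} = t_{S \cup S'}$ when $S \cap S' = \emptyset$ and $= 0$ otherwise, each factor $1 + a_S t_S$ is unipotent and the product, expanded out, has the form $1 + a_S t_S + (\text{terms } t_T \text{ with } |T| > |S|)$ "triangularly". One then shows $\Phi$ is a bijection by triangularity (solve for the $a_S$ recursively on $|S|$) and a homomorphism from the additive group by a direct check that $\Phi(a + a') = \Phi(a)\Phi(a')$; this last identity is exactly where characteristic $2$ is used — cross terms $a_S a'_S t_S^2 = 0$ and the relevant binomial coefficients vanish — and it is the step that needs the most care, though it reduces to the commuting square-zero relations among the $t_i$. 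The statement of Theorem~\ref{thm-decomposition-exp} alluded to in the text is presumably this explicit decomposition, proved in greater generality; for the Lemma as stated, the order-of-multiplication bookkeeping together with the square-zero relations is all that is required.
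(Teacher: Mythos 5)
Your argument is correct, but it is not the route the paper takes for this Lemma. The paper argues by induction on $r$: writing $G = H \times \z/2$, it splits $S(R[G]) \cong S(R[H]) \times K$, where $K$ is identified with $S_{R[H]}\bigl(R[H][\z/2]\bigr) \cong R[H]$ via the base case $r=1$, for which $a \mapsto 1 + a + a\sigma$ is an explicit isomorphism $R \to S(R[\z/2])$; this is quick but produces the isomorphism only abstractly, one coordinate at a time. What you do instead — identify $R[G]$ with $R[t_1,\dots,t_r]/(t_1^2,\dots,t_r^2)$, and show that $(a_S)_S \mapsto \prod_S (1 + a_S t_S)$ is an isomorphism, additivity coming from $t_S^2 = 0$ together with commutativity of $R[G]$, bijectivity from triangularity with respect to $|S|$ — is essentially the explicit decomposition the paper only establishes later, in Theorem~\ref{thm-decomposition-exp}: your $t_S$ is the paper's $u_X$ and your factor $1 + a_S t_S$ is its $\exp_{u_X}(a_S)$. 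Even there the paper's proof differs: injectivity and surjectivity are obtained by passing to the quotient group algebras $R[G_X]$, whereas your recursive (triangular) solution for the coefficients is a clean alternative. Two small remarks: your middle paragraph on the filtration by $1 + I^j$ is a dead end which you rightly abandon in favour of the explicit map, and your concern about the order of the factors is moot since $R[G]$ is commutative. In sum, the paper's induction buys brevity for this Lemma, while your construction buys the explicit formula — which the paper needs anyway, and proves separately, for the later computations with the differential.
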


(In order to avoid confusion with other upperscripts, we
write~$R^{\oplus 2^r - 1}$ for a direct sum of~$2^r - 1$ copies
of~$R$.)

\begin{proof}
By induction on~$r$. For~$r=1$ it is immediate that, writing~$\z/2 =
\{ 1, \sigma  \}$, the map 
\[ a \mapsto 1 + a + a \sigma   \]
is the required isomorphism between~$R$ and~$S(R[\z/2])$. 

Now suppose~$G = H \times \z/2$ with~$H= (\z/2)^{r} $. It is clear
that 
\[ S(R[G]) \cong S(R[H]) \times K  \]
where~$K$ is the group of those~$x \in S(R[G])$ mapping to~$1$ under
the map~$S(R[G]) \to S(R[H])$, itself induced by the projection~$G \to
H$. However we can view~$R[G]$ as~$R'[\z/2]$ with~$R'= k[H]$, and
under this identification, $K$ is simply~$S(R'[\z/2])$.

By induction, $S(R[H])$ is isomorphic to~$R^{\oplus 2^{r-1} - 1}$,
while the case~$r=1$ just treated shows that~$S(R'[\z/2])$ is
isomorphic to~$R'$, itself clearly isomorphic to~$R^{\oplus 2^r}$ as
abelian group. The result follows.
\end{proof}

So at least in the case of the group~$G= (\z/2)^r$, we are led to
study a cochain complex which in degree~$n$ is made of~$2^r - 1$
copies of the additive abelian group~$k[G^{n-1}] = A^{n-1}$.

It is of course tempting to compare this cochain complex to another
one having the same underlying abelian groups: indeed
in~\S\ref{subsec-defs-twist} we proved that the ``additive''
cosimplicial group~$A^*$ had zero cohomology. In principle, the
differential on~$S(A^*)$ must be related to that on~$A^*$, but in
order to make this statement precise we need a much more explicit
description of the isomorphism in the Lemma.

Eventually we shall do just that, proving that the cohomology
of~$S(A^*)$ vanishes in degress~$\ge 3$ (though not below).

In the rest of this section, we explain in detail a two-step reduction
process for the computation of the cohomology of cosimplicial groups:
first the restriction to the kernel of the last codegeneracy, and then
in good cases a second, very similar restriction. In brief, expressive
terms, when this two-step reduction is performed for both~$A^*$
and~$A^*_\mm$, the complexes we get are almost the same.

\subsection{Basic fact}

We shall elaborate on the following trivial lemma in
homological algebra.

\begin{lem} \label{lem-iterate} Let~$(C^*, d)$ be a cochain complex of
  abelian groups. Assume that
\[ d = \sum_{i=0}^{n+1} (-1)^i d^i \, ,   \]
where~$d^i\colon C^n \to C^{n+1}$ is a homomorphism (we do not assume
the cosimplicial identities!).

Assume that there are maps~$\varepsilon_n \colon C^n\to C^{n-1}$
satisfying
\[ \varepsilon_{n+1}(d^i(x)) = d^i(\varepsilon_{n}(x))
\qquad \textnormal{for}~ 0\le i < n \, , 
\]
and 
\[ \varepsilon_{n+1}(d^n(x)) = \varepsilon_{n+1}(d^{n+1}(x)) = x \, .  \]
Put~$K^n= \ker \varepsilon_n$. Then~$d$ carries~$K^n$ into~$K^{n+1}$,
and we have 
\[ H^n(C^*, d) = H^n(K^*, d) \, .   \]

\end{lem}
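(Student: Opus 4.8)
The plan is to exhibit $(K^*,d)$ as a deformation retract of $(C^*,d)$. Concretely I will build, out of the maps $\varepsilon_*$ alone, an idempotent chain map $p$ on $C^*$ with image exactly $K^*$, together with a cochain homotopy $h$ satisfying $d h+h d=\mathrm{id}-p$. Granting this, let $\iota\colon K^*\hookrightarrow C^*$ be the inclusion and $\pi\colon C^*\to K^*$ the corestriction of $p$; both are chain maps, $\pi\iota=\mathrm{id}_{K^*}$, and $\iota\pi=p$ is cochain homotopic to $\mathrm{id}_{C^*}$ via $h$, so $\iota$ induces an isomorphism $H^n(K^*,d)\xrightarrow{\sim}H^n(C^*,d)$ for all $n$. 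Note this argument uses only the two displayed hypotheses on $\varepsilon_*$, never a cosimplicial identity.

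First I would check that $d$ sends $K^*$ to itself and simultaneously pin down $p$. If $\varepsilon_n(x)=0$, then applying $\varepsilon_{n+1}$ to $dx=\sum_{i=0}^{n+1}(-1)^i d^i x$ annihilates every term with $i<n$, by the first hypothesis, while the $i=n$ and $i=n+1$ terms give $(-1)^n x+(-1)^{n+1}x=0$; hence $\varepsilon_{n+1}(dx)=0$ and $d(K^n)\subseteq K^{n+1}$. Now write $d^{n}\colon C^{n-1}\to C^{n}$ for the operator of index $n$ in the differential on $C^{n-1}$, and set $u_n:=d^{n}\circ\varepsilon_n\colon C^n\to C^n$ and $p_n:=\mathrm{id}_{C^n}-u_n$. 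The instance $\varepsilon_n d^{n}=\mathrm{id}_{C^{n-1}}$ of the second hypothesis gives $u_n^2=u_n$, hence $p_n^2=p_n$; and $\varepsilon_n u_n=\varepsilon_n$ gives $\ker u_n=K^n$, so $\operatorname{im}p_n=K^n$ and $p_n$ is the identity on $K^n$.

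Next I would put $h_n:=(-1)^n\varepsilon_n\colon C^n\to C^{n-1}$ and compute $d h_n+h_{n+1}d$ on $C^n$. Expanding both composites through $d=\sum(-1)^i d^i$ and using the two hypotheses on $\varepsilon_*$, one finds: the operators $d^0,\dots,d^{n-1}$ enter $d h_n$ with total sign $(-1)^n$ and $h_{n+1}d$ with total sign $(-1)^{n+1}$, so their contributions cancel; the $d^{n}$- and $d^{n+1}$-terms sitting inside $h_{n+1}d$ collapse to $-x+x=0$; and what is left over is exactly $d^{n}(\varepsilon_n x)=u_n(x)$. Thus $d h_n+h_{n+1}d=u_n=\mathrm{id}_{C^n}-p_n$. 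Because $d^2=0$ the graded endomorphism $d h+h d$ commutes with $d$, so $p=\mathrm{id}-(d h+h d)$ is a chain map, and the formal argument of the first paragraph finishes the proof.

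The one point that needs care — the ``hard part'', to the extent there is one — is the sign accounting in that last computation: one has to keep the differential on $C^{n-1}$, which is $\sum_{i=0}^{n}(-1)^i d^i$, rigorously separate from the differential on $C^n$, which is $\sum_{i=0}^{n+1}(-1)^i d^i$, so that the interior operators really do cancel in pairs and the two surviving ``$\pm x$'' terms really do telescope, leaving precisely $u_n$ and nothing more. A harmless boundary effect in the lowest degree present (where $C^{n-1}$ or $\varepsilon_n$ is not defined) is absorbed by the standing convention that undefined terms are zero.
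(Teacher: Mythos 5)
Your proof is correct, and it packages the argument differently from the paper. The paper works class by class: it observes that if $\varepsilon(d(x))=0$ then $d(\varepsilon(x))=\pm d^n(\varepsilon(x))$, and then replaces a cocycle $x$ by $x-d^n(\varepsilon(x))$ (and a preimage $y$ of a coboundary by $y-d^{n-1}(\varepsilon_{n-1}(y))$) to prove surjectivity and injectivity of $H^n(K^*)\to H^n(C^*)$ directly. Your replacement operator is exactly the same, namely $p=\mathrm{id}-d^n\varepsilon_n$, but you go further and establish the identity $dh+hd=\mathrm{id}-p$ with $h_n=(-1)^n\varepsilon_n$ \emph{on all of $C^n$}, not just on cocycles; your sign bookkeeping (the $i<n$ terms cancel between $dh$ and $hd$, the $i=n,n+1$ terms telescope to zero, leaving $d^n\varepsilon_n$) is accurate, and the verifications that $p$ is an idempotent chain map with image $K^*$, identity on $K^*$, are all sound, as is the check that $d(K^n)\subseteq K^{n+1}$. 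What this buys is a stronger conclusion: $(K^*,d)$ is a deformation retract of $(C^*,d)$, i.e.\ the inclusion is a chain homotopy equivalence, from which the isomorphism on cohomology in all degrees follows formally and uniformly (no separate surjectivity/injectivity arguments). The paper's route is shorter and needs only the special case of your homotopy identity evaluated on cocycles; yours costs one careful sign computation but is more robust (it would, for instance, survive applying any additive functor). The boundary-degree convention you invoke is the same one the paper uses implicitly, and it is indeed harmless here.
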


\begin{proof} 
The fact that~$d$ carries~$K^n$ into~$K^{n+1}$ is trivial.

Observe the following: if~$x\in C^n$ is any element such that~$\varepsilon
(d(x)) = 0$, then~$d(\varepsilon (x)) = \pm d^n(\varepsilon (x))$, and in
particular~$d^n(\varepsilon (x))$ is a coboundary.

Applying this to an~$x\in C^n$ such that~$d(x) = 0$, we see that the
cohomology class of~$x$ is the same as that of~$x'= x -
d^n(\varepsilon (x))$. However~$\varepsilon_n (x') = \varepsilon_n
(x)- \varepsilon_n d^n \varepsilon_n(x) = \varepsilon_n(x)-
\varepsilon_n(x) = 0$, that is~$x'\in K^n$. Thus the natural map 
\[ H^n(K^*, d) \to H^n(C^*, d)  \]
is surjective.

To see that it is injective, too, pick~$x\in K^n$ such that~$x = d(y)$
for some~$y \in C^{n-1}$. Since~$\varepsilon (x)= 0 = \varepsilon
(d(y))$, the observation above applied to~$y$ shows
that~$d^{n-1}(\varepsilon_{n-1} (y))$ is a coboundary. Therefore if we
put~$y' = y - d^{n-1}(\varepsilon_{n-1} (y))$, we have~$d(y')=
d(y) = x$. However~$\varepsilon_{n-1} (y') = \varepsilon_{n-1} (y) -
\varepsilon_{n-1} d^{n-1} (\varepsilon_{n-1} (y)) = 0$, so~$y'\in
K^{n-1}$, and~$y$ is a coboundary in the complex~$K^*$.
\end{proof}

This lemma allows us to replace the cochain complex~$(C^*, d)$ by a
smaller complex, without losing the cohomological information. The
purpose of the next subsection is to show that, if~$(A^*, d)$ is the
complex associated to a cosimplicial group, then in the vein of the
above lemma we may produce a subcomplex~$(B^*, d)$ which computes the
cohomology of~$(A^*, d)$. However this time there is finer information
available on the coboundary of~$B^*$ (see
Lemma~\ref{lem-diff-explicit}). What is more, the shifted
complex~$(C^*, d) := (B^{*+1}, d)$ retains enough of the original
cosimplicial structure for us to apply Lemma~\ref{lem-iterate} in good
cases. We are thus capable of making a two-step reduction from~$A^*$
to~$C^*$ to~$K^*$.

\subsection{Reduction of cosimplicial abelian groups} \label{subsec-assumptions}

We assume that~$A^*$ is a cosimplicial abelian group, written
additively for now. We write~$\varepsilon_n \colon A^n \to A^{n-1}$
for the degeneracy map~$s^{n-1}$, in order to make a parallel with
Lemma~\ref{lem-iterate}. We recall that~$\varepsilon_{n+1}(d^i(x)) =
d^i(\varepsilon_n(x))$ for~$0 \le i < n$,
while~$\varepsilon_{n+1}(d^n(x)) = \varepsilon_{n+1}(d^{n+1}(x)) =
x$. These follow from the cosimplicial identities (which are recalled
in the Appendix). In particular, the map $d^{n+1}\colon A^n \to
A^{n+1}$ is injective.

We define~$\beta^i \colon A^{n-1} \to A^n$ by~$\beta^i= d^i$ for~$0
\le i \le n$, and~$\beta^{n+1} = \beta^n = d^n$. Note that in this way we
have created~$n+2$ maps out of~$A^{n-1}$, and for each of them there
is a commutative diagram 
\[ \begin{CD}
A^{n-1} @>{\beta^i}>> A^n \\
@V{d^n}VV                  @VV{d^{n+1}}V \\
A^n   @>{d^i}>> A^n \, . 
\end{CD}
\]
Indeed, for~$0\le i \le n$ the commutativity follows from the
cosimplicial relations, while for~$i=n+1$ it is tautological. Let us
write~$\iota_{n+1} = d^{n+1} \colon A^n \to A^{n+1}$.

We now further define a map~$\beta = \beta_n \colon A^{n-1} \to A^n$ by
\[ \beta = \sum_{i=0}^{n+1} (-1)^i \beta^i = \sum_{i=0}^{n-1} (-1)^i
\beta^i\, , \]
the equality following from~$\beta^n = \beta^{n+1}$. We have~$d\circ
\iota = \iota \circ \beta $.

Since~$\iota_n$ is injective, for all~$n$, we deduce from~$d^2 = 0$
that~$\beta^2 = 0$.

Thus we have produced a new cochain complex~$(m(A)^*, \beta )$,
where~$m(A)^n = A^{n-1}$.  Moreover~$\iota $ is a cochain map~$m(A)^*
\to A^*$. As a result there is an induced map in cohomology
\[ H^n(m(A)^*, \beta ) \to H^n(A^*, d) \, .   \]

\begin{lem}
  Under these conditions, $\varepsilon_*\colon (A^*, d) \to (m(A)^*,
  \beta )$ is a map of cochain complexes.
\end{lem}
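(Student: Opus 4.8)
The plan is to unwind the definitions and reduce the statement to the cosimplicial identities recalled at the start of~\S\ref{subsec-assumptions}. Fix~$n$. On~$A^n$ the differential is~$d = \sum_{i=0}^{n+1} (-1)^i d^i$, whereas on~$m(A)^n = A^{n-1}$ the differential is~$\beta = \sum_{i=0}^{n-1} (-1)^i d^i$ (with~$d^i \colon A^{n-1} \to A^n$ the cofaces), the terms~$i=n$ and~$i=n+1$ having already cancelled in the definition of~$\beta$ since~$\beta^n = \beta^{n+1}$. Since~$\varepsilon_n \colon A^n \to A^{n-1} = m(A)^n$ and~$\varepsilon_{n+1} \colon A^{n+1} \to A^n = m(A)^{n+1}$, the assertion amounts to the single identity~$\varepsilon_{n+1} \circ d = \beta \circ \varepsilon_n$ between maps~$A^n \to A^n$, for every~$n$.

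The verification is then direct. I would expand~$\varepsilon_{n+1} \circ d = \sum_{i=0}^{n+1} (-1)^i\, \varepsilon_{n+1} d^i$ and split the range of summation: for~$0 \le i < n$ one applies the relation~$\varepsilon_{n+1} d^i = d^i \varepsilon_n$, and for~$i \in \{ n, n+1 \}$ one applies~$\varepsilon_{n+1} d^n = \varepsilon_{n+1} d^{n+1} = \mathrm{id}$. The computation then reads
\[ \varepsilon_{n+1} \circ d = \sum_{i=0}^{n-1} (-1)^i d^i \varepsilon_n + \big( (-1)^n + (-1)^{n+1} \big)\mathrm{id} = \sum_{i=0}^{n-1} (-1)^i d^i \varepsilon_n = \beta \circ \varepsilon_n \, , \]
which is exactly what is wanted.

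Honestly there is no serious obstacle here: the result is a purely formal consequence of the cosimplicial relations. The one point deserving a moment's attention is the bookkeeping forced by the degree shift~$m(A)^n = A^{n-1}$ — one should check that the cancellation of the~$i=n$ and~$i=n+1$ terms on the left is precisely what accounts for the truncated sum~$\sum_{i=0}^{n-1}$ defining~$\beta$ on the right, so that~$\varepsilon_*$ commutes with the differentials in every degree and is genuinely a morphism of cochain complexes.
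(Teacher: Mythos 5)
Your proof is correct and is essentially the paper's own argument: apply $\varepsilon_{n+1}$ to $d=\sum_{i=0}^{n+1}(-1)^i d^i$, commute past the cofaces with $i<n$ via $\varepsilon_{n+1}d^i=d^i\varepsilon_n$, and let the $i=n$ and $i=n+1$ terms cancel since both equal the identity with opposite signs, leaving $\beta\circ\varepsilon_n$. The paper's proof is just this one-line computation (plus the remark that the case $n=0$ also goes through), so there is nothing to add.
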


\begin{proof}

\[ \varepsilon_{n+1}\left( \sum_{i=0}^{n+1} (-1)^i d^i(x)  \right) =
\sum_{i=0}^{n-1} (-1)^i d^i(\varepsilon_{n}(x)) = \beta (\varepsilon_n(x)) \,
.  \]
This also works for~$n=0$.
\end{proof}

\label{subsec-decomposition}

Now for each~$n \ge 1$ we have~$\varepsilon_n \circ \iota_n = id$. It
follows that~$p_n= \iota_i \circ \varepsilon_n$ is a projector that
commutes with the coboundary maps; we may write~$A^n = A^{n-1} \oplus
B^n$ with~$B^n = \ker(p_n)$, and~$d$ carries~$B^n$ into~$B^{n+1}$.

In other words there is a direct sum of cochain complexes~$A^* =
m(A)^* \oplus B^*$, and in cohomology we get 
\[ H^n(A^*, d) = H^n( m(A)^*, \beta ) \oplus H^n(B^*, d) \, .  \]

However under these assumptions we can also show:

\begin{lem}
For all~$n$ we have~$H^n(m(A)^*, \beta ) = 0$.
\end{lem}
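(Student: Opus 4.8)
The plan is to exhibit an explicit contracting homotopy for the cochain complex $(m(A)^*, \beta)$, using the extra map $\beta^{n+1} = \beta^n = d^n$ that was built into the construction. Recall that $m(A)^n = A^{n-1}$ and that $\beta = \sum_{i=0}^{n-1}(-1)^i d^i \colon A^{n-1} \to A^n$. The key observation is that the ``missing'' alternating terms — the ones that would normally be $(-1)^n d^n + (-1)^{n+1} d^{n+1}$ in an honest cosimplicial differential — have here been collapsed to zero, so $\beta$ is really just the partial alternating sum of the first $n$ cofaces. A partial alternating sum of cofaces is precisely the sort of thing that admits a contracting homotopy coming from a codegeneracy, and the natural candidate here is $s^{n-1} = \varepsilon_n$ itself.

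Concretely, I would define $h_n \colon m(A)^{n+1} = A^n \to A^{n-1} = m(A)^n$ by $h_n = \varepsilon_n = s^{n-1}$, and check the homotopy identity $h_{n+1}\beta_n + \beta_{n-1} h_n = \mathrm{id}$ on $A^n$ (with appropriate sign bookkeeping; one may need $h_n = \pm \varepsilon_n$ or a sign depending on parity). Expanding $h_{n+1}\beta_n(x) = \varepsilon_{n+1}\bigl(\sum_{i=0}^{n-1}(-1)^i d^i x\bigr)$, the cosimplicial identities in the form $\varepsilon_{n+1} d^i = d^i \varepsilon_n$ for $i < n$ (from \S\ref{subsec-assumptions}) turn the terms with $i \le n-2$ into $\sum (-1)^i d^i \varepsilon_n x$, which is almost $\beta_{n-1}(\varepsilon_n x) = \sum_{i=0}^{n-2}(-1)^i d^i \varepsilon_n x$ up to the top term; and the $i = n-1$ term, together with the relation $\varepsilon_{n+1} d^n = \varepsilon_{n+1}d^{n+1} = \mathrm{id}$ and $\varepsilon_{n+1}d^{n-1} = d^{n-1}\varepsilon_n$, should collapse to exactly $x$. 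Summing the two homotopy terms, everything cancels except the identity. This is the standard ``extra degeneracy'' argument, and the point is that the definition $\beta^n = \beta^{n+1} = d^n$ was rigged so that $\varepsilon_{n+1}$ plays the role of an extra codegeneracy making $m(A)^*$ an augmented complex that is split exact.

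The main obstacle is the sign and index bookkeeping near the top degree: one must track carefully how the truncation of the alternating sum at $i = n-1$ interacts with the two relations $\varepsilon_{n+1}d^n = \varepsilon_{n+1}d^{n+1} = \mathrm{id}$, since this is exactly where the complex $m(A)^*$ differs from a genuine cosimplicial complex and where the homotopy could fail if the signs do not line up. A clean way to organize this, which I would adopt if the direct computation gets messy, is to note that $m(A)^*$ with the augmentation $\varepsilon$ is the complex of a ``pre-cosimplicial'' object with an extra codegeneracy in the sense of the Appendix material on cosimplicial objects, and then invoke the general fact — presumably recorded there — that such objects have vanishing cohomology; this is morally the same mechanism as Lemma~\ref{lem-goodwillie}. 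Either way the conclusion $H^n(m(A)^*, \beta) = 0$ for all $n$ follows.
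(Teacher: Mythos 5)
Your contracting-homotopy argument is correct and is essentially the paper's own proof: the paper checks precisely the identity $\beta(\varepsilon_{n-1}(x)) = (-1)^n x$ for a $\beta$-cocycle $x$ (by adding the missing term $(-1)^n d^n(x)=(-1)^n\iota_n(x)$ and applying $\varepsilon$, using $\varepsilon d^i = d^i\varepsilon$ for $i<n$ and $\varepsilon\circ\iota = \mathrm{id}$), which is exactly your homotopy identity $h\beta + \beta h = \mathrm{id}$ specialized to cocycles. The sign bookkeeping you flag does work out (one gets $\varepsilon\beta - \beta\varepsilon = (-1)^{n-1}\mathrm{id}$ on $m(A)^n$, so $h = (-1)^{n-1}\varepsilon$ serves as the contraction), and no appeal to the Appendix is needed.
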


\begin{proof}
Let~$x\in A_{n-1}$ be a cocycle in degree~$n$. The condition~$\beta
(x) = 0$ reads 
\[ \sum_{i=0}^{n-1} (-1)^i d^i(x) = 0 \, .   \]
By adding~$(-1)^nd^n(x) = (-1)^n \iota_{n}(x)$ on each side, we find
that~$d(x) = (-1)^n\iota_{n}(x)$. Now applying~$\varepsilon_n$ to this
equality yields 
\[ \varepsilon (d(x)) = \beta (\varepsilon (x)) = (-1)^n \varepsilon (\iota
(x)) = (-1)^n x \, .   \]
Hence~$x$ is the coboundary of~$(-1)^n \varepsilon (x)$.
\end{proof}

Hence:

\begin{prop} \label{prop-coho-iso} 
When~$A^*$ is a cosimplicial abelian group, then for all~$n\ge 0$
there is an isomorphism
\[ H^n(A^*, d) \cong H^n(B^*, d) \, ,   \]
where~$B^n$ is the kernel of~$\varepsilon_n$. 
\end{prop}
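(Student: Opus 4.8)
The plan is to combine the two cochain-maps between $A^*$, $m(A)^*$ and $B^*$ that we have just constructed, together with the vanishing statement $H^n(m(A)^*,\beta)=0$ for all $n$.

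First I would record the splitting produced above. Since $\varepsilon_n\circ\iota_n=\mathrm{id}$, the endomorphism $p_n=\iota_n\circ\varepsilon_n$ of $A^n$ is an idempotent commuting with $d$, so we get an internal direct sum of cochain complexes
\[
A^*=\iota_*\big(m(A)^*\big)\oplus B^*,
\]
where $B^n=\ker(p_n)=\ker(\varepsilon_n)$ (using that $\iota_n$ is injective, so $\ker p_n=\ker\varepsilon_n$). Taking cohomology of a direct sum of complexes,
\[
H^n(A^*,d)\cong H^n(m(A)^*,\beta)\oplus H^n(B^*,d).
\]

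Then I would simply invoke the previous Lemma, which gives $H^n(m(A)^*,\beta)=0$ for all $n$. Feeding this into the displayed decomposition immediately yields $H^n(A^*,d)\cong H^n(B^*,d)$ for every $n\ge 0$, which is the assertion of the Proposition. (One should double-check the edge case $n=0$: here $B^0$ should be interpreted so that $m(A)^0=A^{-1}=0$, and the argument degenerates correctly, as was already noted parenthetically in the proof of the map-of-complexes lemma.)

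There is essentially no obstacle here: the Proposition is a formal consequence of the lemmas established just before it. The only point requiring a moment's care is the verification that the projector $p_n$ genuinely commutes with the differential $d$ of $A^*$ — this follows because $\varepsilon_*$ and $\iota_*$ are both cochain maps (the former by the lemma above, the latter from $d\circ\iota=\iota\circ\beta$), so $p_n=\iota_n\circ\varepsilon_n$ is a composite of cochain maps and hence commutes with $d$. Everything else is the standard fact that cohomology commutes with finite direct sums of complexes.
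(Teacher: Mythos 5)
Your argument is correct and is essentially the paper's own proof: you use the splitting $A^*=\iota\big(m(A)^*\big)\oplus B^*$ coming from the idempotent $p_n=\iota_n\circ\varepsilon_n$ (a cochain map, since $\varepsilon_*$ and $\iota_*$ are), together with the lemma that $H^n(m(A)^*,\beta)=0$ for all $n$. The identification $\ker p_n=\ker\varepsilon_n$ via injectivity of $\iota_n$ and the degenerate case $n=0$ are handled just as in the text, so nothing further is needed.
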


For computational purposes, the following expressions will help
dealing with the differential on~$B^*$. As observed, the
differential~$d$ of the complex~$A^*$ carries~$B^*$ into itself, but
the same {\em cannot} be said of the individual coface
maps~$d^i$. Instead, we have the following formulae. Let~$q=q_n= id -
p_n$ be the projector orthogonal to~$p_n$, which is a projector
onto~$B^n$, and let~${\bar d}^i = q\circ d^i \colon B^n \to B^{n+1}$;
from the relation~$q\circ d(x) = d(x)$ for~$x\in B^n$ we certainly
have
\[ d(x) = \sum_{i=0}^{n+1} (-1)^n {\bar d}^i(x) \qquad\textnormal{for}~x \in
B^n \, . 
\]
This relation will also be clear from the following more precise
equations. 

\begin{lem} \label{lem-diff-explicit}
We have 
\[ {\bar d}^i = d^i \qquad ~\textnormal{for}~ 0\le i < n \, , 
\]
while 
\[ {\bar d}^n = d^n - d^{n+1} \, ,   \]
and 
\[ {\bar d}^{n+1} = 0 \, .   \]
\end{lem}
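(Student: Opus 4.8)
The plan is to compute $\bar d^i = q_{n+1}\circ d^i$ on $B^n$ straight from the definitions, feeding in the three relations between the last codegeneracy and the coface maps that were recalled at the beginning of~\S\ref{subsec-assumptions}. There is no serious obstacle here: the proof is a one-line expansion followed by three case checks, and the only thing to watch is that the identity $\varepsilon_{n+1}\circ d^i = d^i\circ\varepsilon_n$ is used only in the range $0\le i<n$ for which it holds, and that $\varepsilon_{n+1}$ kills the relevant term precisely because we have restricted to $B^n$.

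First I would rewrite the orthogonal projector as $q_{n+1} = id - p_{n+1} = id - \iota_{n+1}\circ\varepsilon_{n+1} = id - d^{n+1}\circ\varepsilon_{n+1}$, using $\iota_{n+1}=d^{n+1}$. Hence, for any $x\in B^n$,
\[ {\bar d}^i(x) = d^i(x) - d^{n+1}\bigl(\varepsilon_{n+1}(d^i(x))\bigr)\,, \]
and everything reduces to evaluating $\varepsilon_{n+1}(d^i(x))$. For $0\le i<n$ we have $\varepsilon_{n+1}(d^i(x)) = d^i(\varepsilon_n(x))$, which is $0$ because $B^n = \ker\varepsilon_n$; this gives ${\bar d}^i = d^i$. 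For $i=n$ and $i=n+1$ the relations $\varepsilon_{n+1}(d^n(x)) = x$ and $\varepsilon_{n+1}(d^{n+1}(x)) = x$ (valid for all $x\in A^n$) yield ${\bar d}^n(x) = d^n(x)-d^{n+1}(x)$ and ${\bar d}^{n+1}(x) = d^{n+1}(x)-d^{n+1}(x) = 0$, as claimed.

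As a consistency check I would then recover the displayed formula $d(x) = \sum_{i=0}^{n+1}(-1)^i{\bar d}^i(x)$ for $x\in B^n$ by summing the three cases: the $-d^{n+1}$ contributed by ${\bar d}^n$, together with the vanishing of ${\bar d}^{n+1}$, reproduces exactly the $(-1)^{n+1}d^{n+1}(x)$ term, so $\sum_{i}(-1)^i{\bar d}^i(x) = \sum_{i}(-1)^i d^i(x) = d(x)$, in agreement with $q_{n+1}d(x) = d(x)$. This also confirms that no cosimplicial identity beyond those already listed in~\S\ref{subsec-assumptions} is needed.
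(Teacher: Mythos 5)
Your proposal is correct and follows essentially the same route as the paper: expand ${\bar d}^i(x) = d^i(x) - \iota_{n+1}\varepsilon_{n+1}(d^i(x))$ and treat the three cases using $\varepsilon_n(x)=0$ on $B^n$ together with the relations $\varepsilon_{n+1}d^i = d^i\varepsilon_n$ for $i<n$ and $\varepsilon_{n+1}d^n = \varepsilon_{n+1}d^{n+1} = id$. The concluding consistency check on $d(x)=\sum_i(-1)^i{\bar d}^i(x)$ is a harmless addition not present in the paper.
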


\begin{proof}
For~$x\in B^n$ we have 
\[ {\bar d}^i(x) = d^i(x) - \iota_{n+1} \varepsilon_{n+1}(d^i(x)) \, .   \]
We have~$\varepsilon_{n}(x)=0$ by definition of~$B^n$, so from
assumption two we get the formula in the case~$0\le i < n$.

For~$i=n$, we use~$\varepsilon_{n+1}(d^n(x)) =
\varepsilon_{n+1}(d^{n+1}(x)) = x$ from assumptions two and
three. Since~$\iota_{n+1}(x) = d^{n+1}(x)$, we do have~${\bar d}^n = d^n -
d^{n+1}$.

The case~$i= n+1$ is similar.
\end{proof}

The fact that~${\bar d}^{n+1}$ is the zero map encourages us to
consider~$B^n$ as being in degree~$n-1$, that is, to consider the
complex~$(C^*, d)= (B^{*+1}, d)$. As announced, in practice we will be
able to apply Lemma~\ref{lem-iterate} to~$(C^*, d)$, though we will
not try to look for axioms on~$(A^*, d)$ for this to hold in
general. Let us give an example at once.

\subsection{First application}

Let~$G = \left( \z/2 \right)^r$, and let~$A^n = k [G^n]$ as in
\S\ref{subsec-defs-twist}; these comprise a cosimplicial abelian group
whose differential will be denoted by~$d$. Its cohomology is zero by
Lemma~\ref{lem-goodwillie}. We shall apply
Proposition~\ref{prop-coho-iso} and deduce the existence of certain
cochain complexes with zero cohomology.

Here and elsewhere, we shall use the following notation: for~$\sigma
\in G$, we write~$\sigma_n$ for the element 
\[ (1, 1, \ldots, 1, \sigma ,1, \ldots, 1) \in G^N  \]
with~$\sigma $ in the~$n$-th position, for some~$N \ge n$ which is
always clear from the context. (Usually~$N=n$.)

\begin{prop} \label{prop-application-homology}
Let~$\sigma \in G$. Define a cochain complex~$(A^*, \delta_\sigma ) $
with~$A^*$ as above and~$\delta \colon A^{n-1} \to A^n$ given by 
\[ \delta_\sigma (a) = d(a) + a(1+\sigma_n) \, .   \]
Then~$\delta_\sigma \circ \delta_\sigma = 0$ and~$H^n(A^*,
\delta_\sigma ) = 0$ for~$n \ge 0$. Moreover, the subcomplex~$\bar
A^*$ of elements of having zero augmentation is preserved
by~$\delta_\sigma $ and we also have~$H^n(\bar A^*, \delta_\sigma ) =
0$ for~$n \ge 0$.
\end{prop}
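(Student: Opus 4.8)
The plan is to realize $(A^*,\delta_\sigma)$ as a complex of the form $(m(A)^*\text{-type construction})$ built from the genuine cosimplicial group $A^*=k[G^*]$, so that the vanishing of its cohomology follows from the machinery of \S\ref{subsec-decomposition} rather than from a direct computation. The key observation is that the extra term $a(1+\sigma_n)$ should be interpreted as the difference of two ``coface-like'' maps. Concretely, note that multiplication by $\sigma_n$ on $A^n=k[G^n]$ is an automorphism coming from translation on the last coordinate; since $\sigma$ has order $2$, the map $\mu_\sigma\colon a\mapsto a\sigma_n$ satisfies $\mu_\sigma^2=\mathrm{id}$, and $a(1+\sigma_n)=a+\mu_\sigma(a)$. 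So $\delta_\sigma=d + \mathrm{id} + \mu_\sigma$ as maps $A^{n-1}\to A^n$, where I am writing $\mathrm{id}$ and $\mu_\sigma$ for maps $A^{n-1}\to A^n$ that first apply $\iota_n=d^{n+1}$ (the injection raising degree) and then the relevant self-map — I will need to fix notation carefully here so that degrees match.

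First I would check $\delta_\sigma\circ\delta_\sigma=0$ directly: expanding $\delta_\sigma^2 = (d+\iota+\mu_\sigma\iota)(d+\iota+\mu_\sigma\iota)$ and using $d^2=0$, $d\iota=\iota\beta$ (from \S\ref{subsec-decomposition}), and the commutation of $\mu_\sigma$ with the cofaces $d^i$ for $i<n$, the cross-terms should telescope; the characteristic-$2$ hypothesis ($1=-1$) is what makes the signs work out. Second, to get the vanishing of cohomology I would mimic the proof of Lemma following Proposition~\ref{prop-coho-iso}: given a $\delta_\sigma$-cocycle $x\in A^{n-1}$, the equation $\delta_\sigma(x)=0$ can be rewritten, after adding the ``missing'' term $d^n(x)=\iota_n(x)$ to both sides, as an identity expressing $d(x)$ (the honest cosimplicial differential, up to the translation twist) in terms of $\iota_n$ applied to something; then apply $\varepsilon_n=s^{n-1}$, use $\varepsilon_n\iota_n=\mathrm{id}$ and the fact that $\varepsilon_n$ commutes with $\mu_\sigma$ (translation on the last coordinate is compatible with the last degeneracy in the right way), and conclude that $x$ is a $\delta_\sigma$-coboundary. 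In effect this shows the shifted complex is contractible by the same explicit homotopy $(-1)^n\varepsilon_n$ that worked before, now in the presence of the twist.

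For the subcomplex $\bar A^*$ of elements of augmentation zero: I would first check that $\delta_\sigma$ preserves augmentation-zero elements — the cofaces $d^i$ and the maps $\iota$, $\mu_\sigma$ are all algebra maps or translations, hence compatible with the counit $\varepsilon\colon k[G^n]\to k$, so $\delta_\sigma(\bar A^{n-1})\subseteq\bar A^n$. Then, since $A^n=\bar A^n\oplus k$ as a direct sum of complexes (the augmentation splits off a trivial summand with zero differential in positive degrees, by Lemma~\ref{lem-goodwillie}-type reasoning, or simply because the constant functions form a subcomplex on which $\delta_\sigma$ acts through $d$ plus a nilpotent piece), the vanishing $H^n(A^*,\delta_\sigma)=0$ forces $H^n(\bar A^*,\delta_\sigma)=0$ as well; one must just check degree $0$ separately, which is immediate.

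The main obstacle I anticipate is bookkeeping rather than conceptual: getting the indexing of the ``degree-raising then twist'' maps exactly right so that $\delta_\sigma$ genuinely factors as $d$ plus $\iota$-based correction terms, and verifying the precise commutation relations between $\mu_\sigma$ (translation on the last coordinate) and the cofaces/codegeneracies. In particular $\mu_\sigma$ commutes with $d^i$ for $0\le i<n$ but interacts with $d^n$, $d^{n+1}$ and with $\varepsilon_n$ in a way that must be pinned down exactly; the fact that $\sigma^2=1$ and $\operatorname{char}k=2$ are used essentially to kill the discrepancies. Once those identities are laid out, the homotopy argument is a line-by-line transcription of the earlier lemma.
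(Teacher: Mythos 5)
Your verification that $\delta_\sigma\circ\delta_\sigma=0$ can indeed be done by direct expansion in characteristic $2$, so that part is fine in principle. The genuine gap is in the central step, the vanishing of $H^n(A^*,\delta_\sigma)$: your claim that the contraction proving $H^n(m(A)^*,\beta)=0$ transcribes ``line by line'' with $\varepsilon_n=s^{n-1}$ as homotopy is false. For a $\delta_\sigma$-cocycle $x\in A^{n-1}$ the cocycle equation reads $d(x)=x(1+\sigma_n)$, and applying $\varepsilon_n$ (using that $\varepsilon_*$ is a chain map to $(m(A)^*,\beta)$) gives $\beta(\varepsilon_{n-1}(x))=\varepsilon_n\bigl(x(1+\sigma_n)\bigr)=0$: the twist term has augmentation zero, so it is annihilated, whereas in the $\beta$-argument the corresponding term was $\iota_n(x)$ and $\varepsilon_n\iota_n=id$ returned $x$ itself. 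You thus learn only that $\varepsilon_{n-1}(x)$ is a $\beta$-cocycle, not that $x=\delta_\sigma(\pm\varepsilon_{n-1}(x))$. Concretely, for $G=\z/2$ the element $x=a+a\sigma_1\in A^1$ is a $\delta_\sigma$-cocycle with $\varepsilon_1(x)=0$, and it is the coboundary of $a\in A^0$, not of $\varepsilon_1(x)$. Moreover, any contraction built solely from the relations you invoke ($\varepsilon$ commutes with $d^i$ for $i<n$, $\varepsilon d^n=\varepsilon d^{n+1}=id$, and the extra term is killed by $\varepsilon$) would apply equally to the complex $(A^*,\partial)$ of Proposition~\ref{prop-multiplicative}, which satisfies exactly these hypotheses and yet has $H^1(A^*,\partial)=k/\{x+x^2\}\neq 0$ in general (Theorem~\ref{thm-main-zedmodtwo}). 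So the vanishing cannot be formal: it requires the global input $H^*(A^*,d)=0$ (Lemma~\ref{lem-goodwillie}), which your sketch never uses. The paper's proof goes through precisely this input: Proposition~\ref{prop-coho-iso} gives $H^n(A^*,d)\cong H^n(B^*,d)$ with $B^n=\ker\varepsilon_n$, hence $H^n(B^*,d)=0$, and the explicit isomorphism $(a_\sigma)_{\sigma\neq 1}\mapsto\sum_\sigma a_\sigma(1+\sigma_n)$ identifies $B^n$ with $\bigoplus_{\sigma\neq 1}A^{n-1}_\sigma$ and transports $d|_{B^*}$ (computed by Lemma~\ref{lem-diff-explicit}) into the direct sum of the $\delta_\sigma$; this identification is also what yields $\delta_\sigma^2=0$ for free.

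Two further corrections on the last statement. In this paper $\bar A^n$ is the kernel of the last codegeneracy $\varepsilon_n\colon A^n\to A^{n-1}$ (this is how the Proposition is used in the proof of Theorem~\ref{thm-main-zedmodtwo}), not of the augmentation to $k$; and your proposed direct-sum splitting of $(A^*,\delta_\sigma)$ with the constants as a complementary subcomplex is false, since $\delta_\sigma(1)=d(1)+1+\sigma_n\notin k\cdot 1$. The correct (and intended) argument is Lemma~\ref{lem-iterate}: writing the modified last coface as $x\mapsto d^{n+1}(x)+x(1+\sigma_{n+1})$, its composite with $\varepsilon_{n+1}$ is still the identity because $\varepsilon_{n+1}\bigl(x(1+\sigma_{n+1})\bigr)=0$, so the hypotheses of that lemma hold and $H^n(\bar A^*,\delta_\sigma)=H^n(A^*,\delta_\sigma)$; no complementary subcomplex exists or is needed there.
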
   

\begin{proof}
We apply Proposition~\ref{prop-coho-iso}. We have a
decomposition~$A^{n} = A^{n-1} \oplus B^n$ where~$B^n$ is the kernel
of the augmentation~$A^n \to A^{n-1}$, and the cohomology of~$B^n$
vanishes. 

For~$\sigma \in G$ such that~$\sigma \ne 1$, let~$A^n_\sigma $ be a
copy of~$A^n$. For~$n \ge 1$ we use the identification 
\[ \begin{array}{rcl}
\psi \colon {\displaystyle \bigoplus_\sigma A^{n-1}_\sigma} & \stackrel{\cong}{\longrightarrow} & B^n \\
                (a_\sigma )_\sigma & \mapsto & {\displaystyle  \left( \sum_\sigma
a_\sigma    \right) + \sum_\sigma  a_\sigma \, \sigma_n} \, .  
\end{array}   \]
This isomorphism defines a differential~$\delta = \psi^{-1} \circ
d \circ \psi$ on~$C^* = \bigoplus_\sigma A^*_\sigma $ which must then
satisfy~$H^n(C^*, \delta ) = H^n(B^{*+1}, d) = 0$ at least for~$n \ge
1$. 

Now checking the definitions, we see that~$\delta $ splits as the
direct sum of the differentials~$\delta_\sigma$ given in the statement of
the Proposition. It is immediate that the cohomology of~$\delta_\sigma
$ is also zero in degree~$0$. We have proved the first statement.

Finally, since~$\varepsilon_n (1 + \sigma_n) = 0$, we can apply
Lemma~\ref{lem-iterate} to~$(A^*, \delta_\sigma )$. The second
statement follows.
\end{proof}



\section{The case of~$\z/2$} \label{sec-zedmodtwo}

In this section we compute completely the Sweedler cohomology
of~$\O_k(\z/2)$, or equivalently the twist cohomology
of~$k[\z/2]$. The group with two elements is so simple that other
approaches than the one below are possible, which could be easier
(looking at the normalized cocycles is a good idea). However, we
choose to apply the strategy described in~\S\ref{subsec-strategy} as
an illustration which is much less technical than the general case.

\subsection{The unit sphere} \label{subsec-log-exp}

Let~$R$ be a commutative ring of characteristic~$2$. The elements of
the group~$\z/2$ will be written~$1$ and~$\sigma $. The group
algebra~$A= R[\z/2]$ consists, of course, of the elements~$z = x + y
\sigma $ with~$x, y\in R$.

We define the {\em modulus} of~$z$ to be 
\[ |z| = x + y \in R\qquad (= \sqrt{(x+y)^2} = \sqrt{x^2 + y^2}) \, ,  \]
or in other words we shall write~$|z|$ for the augmentation of~$z$.
We note that~$z\mapsto |z|$ is a map of algebras~$A\to R$.

We have the relation
\[  z^2 = |z|^2 \, ,   \]
from which it follows that~$z$ is invertible in~$A$ if and only
if~$|z|$ is invertible in~$R$ (and then~$z^{-1} = |z|^{-2} z$). As a
result the elements in the {\em unit sphere} 
\[ S(A) = \{ z\in A ~:~ |z| = 1 \}
\]
are all invertible in~$A$.

 There is an isomorphism
\[ A_\mm \stackrel{\simeq}{\longrightarrow} R_\mm \times S(A) \,
, 
\]
given by~$z \mapsto (|z|,  \frac{z} {|z|})$.

An element in~$S(A)$ is of the form~$(1+x) + x \sigma $. For any~$x\in
R$ we define its {\em exponential} to be precisely 
\[ e^x  = (1+x) + x \sigma \in A_\mm \, .   \]
There is the usual formula 
\[ e^{a+b} = e^a e^b \, .   \]
The exponential gives an isomorphism~$R \to S(A)$, whose inverse we
call the logarithm and write~$\log\colon S(A)\to R$. We end up with an
isomorphism 
\[ A_\mm \stackrel{\simeq}{\longrightarrow} R_\mm \times R \, , 
\]
given by~$z\mapsto (|z|, \log( \frac{z} {|z|} ))$.

\subsection{Higher group algebras}

Let~$k$ be a ring of characteristic~$2$, let~$A^0 = k$, and for~$n\ge
1$ let~$A^n = k[(\z/2)^n]$. We always see~$A^n$ as a subring
of~$A^{n+1}$. The evident generators for~$(\z/2)^n$ will be
written~$\sigma_1, \ldots, \sigma_n$, so that for example~$\sigma_1
\sigma_2$ is an element of~$A^2$. Of course it may also be considered
as an element of~$A^3$, but in practice the ambiguities created are of
no consequence.

It is fundamental that~$A^{n+1} = A^n[\z/2]$.

\begin{prop} \label{prop-multiplicative}
 Let~$A^*$ and~$d$ be as above. Define a cochain complex~$(A^{*},
  \partial )$ with~$\partial \colon A^{n-1} \to A^n$ given by
\begin{align*}
\partial (a) & = d(a) + d^{n}(a)(1 + e^{1+a}) \\
          & = d(a) + (a+a^2) (1+ \sigma_n) \, . 
\end{align*}
Then~$\partial \circ \partial = 0$ and~$H^n( A^*_\mm, d) = H^{n-1}(
A^{*}, \partial )$ for~$n \ge 2$. Moreover, the cochain
complex~$(A^*, \partial )$ satisfies the hypotheses of
Lemma~\ref{lem-iterate}. 
\end{prop}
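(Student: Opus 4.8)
The plan is to realise $(A^{*},\partial)$ as the image, under the exponential of \S\ref{subsec-log-exp}, of the reduced complex that Proposition~\ref{prop-coho-iso} attaches to the \emph{multiplicative} cosimplicial abelian group $A^{*}_\mm = k[(\z/2)^{*}]_\mm$. First I would note that the last codegeneracy $\varepsilon_n = s^{n-1}\colon A^{n}_\mm\to A^{n-1}_\mm$ is, under the identification $A^{n} = A^{n-1}[\z/2]$, exactly the augmentation (as in \S\ref{subsec-strategy}); hence Proposition~\ref{prop-coho-iso} applies and yields $H^{n}(A^{*}_\mm,d)\cong H^{n}(B^{*}_\mm,d)$ for every $n$, where $B^{n}_\mm = \ker\varepsilon_n = S(A^{n-1}[\z/2])$ is the unit sphere. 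Next, applying \S\ref{subsec-log-exp} with $R = A^{n-1}$ and $\sigma = \sigma_n$, the map $a\mapsto e^{a} = (1+a)+a\sigma_n$ is an isomorphism of abelian groups $A^{n-1}\xrightarrow{\sim} B^{n}_\mm$, additive precisely because the characteristic is $2$. Transporting $d$ through these isomorphisms (which shift the homological degree by one) produces a differential $\partial$ on $A^{0}, A^{1},\dots$ with $(A^{*},\partial)\cong(B^{*+1}_\mm,d)$. In particular $\partial\circ\partial = 0$ is inherited from $d\circ d = 0$, and combining this isomorphism with Proposition~\ref{prop-coho-iso} gives $H^{n-1}(A^{*},\partial) = H^{n}(A^{*}_\mm,d)$ for $n\ge 2$; the restriction is due solely to the one-step shift, which at the bottom would discard the coboundary $d\colon B^{0}_\mm\to B^{1}_\mm$.

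The substance of the argument — and the step I expect to be the main obstacle — is to check that the transported $\partial$ is the map written in the statement, i.e.\ that $e^{\partial(a)} = d(e^{a})$ with the right-hand side computed in $A^{*}_\mm$. Here I would expand $d(e^{a}) = \prod_{i}(d^{i}(e^{a}))^{(-1)^{i}}$, using that every coface $d^{i}$ is an algebra homomorphism and that $\Delta$ is diagonal on the group-like element $\sigma_n$; this makes each factor explicit. For the interior indices one finds $d^{i}(e^{a}) = e^{d^{i}(a)}$ with respect to the new variable $\sigma_{n+1}$; of the two remaining cofaces, the one applying $\Delta$ to the $\sigma_n$-coordinate sends $e^{a}$ to $(1+a)+a\sigma_n\sigma_{n+1}$, and the one appending a trivial coordinate sends $e^{a}$ to itself. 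The decisive simplification is that every one of these factors is an involution: by the cosimplicial relations the modulus with respect to $\sigma_{n+1}$ of $d^{i}(e^{a})$ is $\varepsilon_{n+1}(d^{i}(e^{a}))$, which equals $1$ for interior $i$ and $e^{a}$ for the last two, and both square to $1$ in $A^{n}$ (the latter because $e^{a}$ lies in a unit sphere). Hence the alternating product collapses to an ordinary product, and $e^{x}e^{y} = e^{x+y}$ absorbs all but the last two factors. A short direct multiplication identifies the product of the last two with $e^{\,d^{n}(a)\,e^{1+a}}$, where $d^{n}(a)$ denotes the standard image $a\in A^{n}$, so that $\partial(a) = \sum_{i<n}d^{i}(a) + d^{n}(a)\,e^{1+a}$; in characteristic $2$ this equals $d(a) + d^{n}(a)(1+e^{1+a})$, since $d(a) = \sum_{i\le n}d^{i}(a)$ and $d^{n}(a)+d^{n}(a) = 0$. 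Finally the two displayed formulas for $\partial$ agree, because $d^{n}(a) = a$ and, in characteristic $2$, $1 + e^{1+a} = 1 + (a+(1+a)\sigma_n) = (1+a)(1+\sigma_n)$, whence $d^{n}(a)(1+e^{1+a}) = a(1+a)(1+\sigma_n) = (a+a^{2})(1+\sigma_n)$.

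Finally I would verify the hypotheses of Lemma~\ref{lem-iterate}, which is routine. In the notation of that lemma put $C^{n} = A^{n}$, so that the differential out of degree $n$ reads $\partial(a) = d(a) + (a+a^{2})(1+\sigma_{n+1})$ for $a\in A^{n}$, with $d = \sum_{i=0}^{n+1}(-1)^{i}d^{i}$ and $d^{i}\colon A^{n}\to A^{n+1}$ the (additive) cofaces. Take $\varepsilon_n\colon A^{n}\to A^{n-1}$ to be the last codegeneracy $s^{n-1}$ of $A^{*}$, and set $\delta^{i} = d^{i}$ for $i\ne n$ and $\delta^{n}(a) = d^{n}(a) + (a+a^{2})(1+\sigma_{n+1})$; then $\partial = \sum_{i=0}^{n+1}(-1)^{i}\delta^{i}$, the sign in front of the extra term being irrelevant in characteristic $2$. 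For $0\le i<n$ the relation $\varepsilon_{n+1}(\delta^{i}(x)) = \delta^{i}(\varepsilon_n(x))$ is the one already recalled in \S\ref{subsec-assumptions}; we have $\varepsilon_{n+1}(\delta^{n+1}(x)) = \varepsilon_{n+1}(d^{n+1}(x)) = x$; and, since $\varepsilon_{n+1}$ is an algebra map fixing $A^{n}$ and sending $\sigma_{n+1}$ to $1$, we get $\varepsilon_{n+1}(\delta^{n}(x)) = \varepsilon_{n+1}(d^{n}(x)) + (x+x^{2})(1+1) = x$. These are exactly the hypotheses required, which finishes the argument.
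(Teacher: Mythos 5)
Your proof is correct and is essentially the paper's own argument: both apply Proposition~\ref{prop-coho-iso} to the cosimplicial group $A^*_\mm$, transport the differential on the unit spheres through the exponential of \S\ref{subsec-log-exp}, identify the product of the images of $e^a$ under the last two cofaces with $e^{d^n(a)e^{1+a}}$, and check the hypotheses of Lemma~\ref{lem-iterate} using that $\varepsilon(e^{1+a})=1$ (equivalently, that the extra term $(a+a^2)(1+\sigma_{n+1})$ is killed by the last codegeneracy). The only cosmetic deviation is that you discard the inverses in the alternating product by noting each factor $d^i(e^a)$ squares to $1$, where the paper instead routes the computation through the maps $\bar d^i$ of Lemma~\ref{lem-diff-explicit}.
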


\begin{proof}

We apply Proposition~\ref{prop-coho-iso}. The decomposition~$A^n_\mm =
A^{n-1}_\mm \times B^n$ as in \S\ref{subsec-decomposition} can be
identified for~$n\ge 1$ with the decomposition~$A^{n}_\mm =
A^{n-1}_\mm \times S(A^n)$. Using the logarithm, we deduce the
decomposition $A^{n}_\mm = A^{n-1}_\mm \times A^{n-1}$, and
Proposition~\ref{prop-coho-iso} now states that there is an
isomorphism for~$n\ge 2$
\[ H^n( A^*_\mm, d) = H^n( A^{*-1}, \partial ) \, ,   \]
where~$\partial $ needs to be explicitly described.

This is based on Lemma~\ref{lem-diff-explicit}, and we write~${\bar
  d}^i$ for the maps described there -- keeping in mind that we need
to use multiplicative notation now. We point out that the elements
of~$B^n = S(A^n) \cong A^{n-1}$ are of order~$2$, so we may ignore the
inverses, just like we can ignore the signs in additive notation.

We write~$\bar \partial ^i(a) = \log( {\bar d}^i(e^a)) $, so that 
\[ \partial (a) = \sum_{i=0}^{n+1} (-1)^i \bar \partial ^i (a) \, .    \]
(Again the signs are here for decoration.)

For~$0\le i < n$, and~$a\in A^{n-1}$, we check readily
that~$d^i(1+a + a \sigma_n) = 1 + d^i(a) + d^i(a)
\sigma_n$, which reads~$d^i(e^a) = e^{d^i(a)}$. It follows that~$\bar
\partial ^i(a) = d^i(a)$ in these cases.

For~$i=n$, we first compute 
\[ d^n(e^a) = 1 + a + a  \sigma_n \sigma_{n+1} \, ,   \]
and
\[ d^{n+1}(e^a) = 1 + a  + a \sigma_n  \, .   \]
The product (=quotient) of these is 
\begin{multline*}
1 + a^2  + (a + a^2) \sigma_n  + a^2 \sigma_{n+1} + (a +
a^2)  \sigma_n  \sigma_{n+1} \\
= e^{a^2 + (a + a^2) \sigma_n} = e^{a(a + (1 + a) \sigma_n)} =   e^{d^n(a)e^{1+a}} \,
. 
\end{multline*}
Thus~$\bar \partial^{n}(a) = d^n(a)e^{1+a} = d^n(a) + d^n(a)(1 +
e^{1+a})$. And~${\bar d}^{n+1}=0$ implies~$\bar \partial^{n+1} = 0$, of
course.

This gives the expression for~$\partial(a)$. To show that the
hypotheses of lemma~\ref{lem-iterate} are satisfied, we note that,
this time, $(A^*, \partial)$ is obtained from~$(A^*, d)$ by replacing
the last coface~$d^{n+1}(a) = a\otimes 1$ by~$d^{n+1}(a)(e^{1+ a})$;
however $\varepsilon (e^{1+a}) = 1$ so~$\varepsilon ( d^{n+1}(a)(e^{1+
  a})  ) = \varepsilon (d^{n+1}(a)) = a$, as we wanted.
\end{proof}

Comparing the Propositions~\ref{prop-application-homology}
and~\ref{prop-multiplicative} shows how close the
differentials~$\delta_\sigma  $ and~$\partial$ are: the expression~$a + a^2$
simply replaces~$a$, so that they are the same ``at first order''.

\subsection{Sweedler cohomology of~$\O_k(\z/2)$}

\begin{thm} \label{thm-main-zedmodtwo}
Let~$k$ be a ring of characteristic~$2$. The twist cohomology
of~$k[\z/2]$, or the Sweedler cohomology of~$\O_k(\z/2)$, is given by
\[ H^n_{sw}(\O(\z/2)) = \left\{ \begin{array}{l}
0 ~\textnormal{for}~ n\ge 3 ~\textnormal{or}~ n=0 \, , \\
~\\
k / \{ x + x^2 ~|~ x \in k \} ~\textnormal{for}~ n=2 \, , \\
~\\
\z/2 ~\textnormal{for}~ n=1 \, . 
\end{array}\right.  \]
In particular when~$k$ is algebraically closed then these groups
vanish in degrees~$\ge 2$.
\end{thm}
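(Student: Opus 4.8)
The strategy is to use Proposition~\ref{prop-multiplicative} to reduce the computation of $H^n_{sw}(\O_k(\z/2)) = H^n(A^*_\mm, d)$, for $n \ge 2$, to the cohomology of the much smaller complex $(A^*, \partial)$, and then to actually compute the latter in each degree. Since Proposition~\ref{prop-multiplicative} tells us that $(A^*, \partial)$ satisfies the hypotheses of Lemma~\ref{lem-iterate}, the first step is to identify the maps $\varepsilon_n$ and the kernel subcomplex $K^*$, and reduce to computing $H^*(K^*, \partial)$. Here $A^0 = k$, $A^1 = k[\z/2]$, $A^2 = k[(\z/2)^2]$, and so on; the map $\varepsilon_n$ is the augmentation $A^n \to A^{n-1}$ sending $\sigma_n \mapsto 1$, so $K^n$ is spanned over $A^{n-1}$ by $(1+\sigma_n)$ and more generally by $(1+\sigma_n)w$ for $w$ running over a basis of $(\z/2)^{n-1}$. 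One should rewrite $\partial$ restricted to $K^*$ explicitly; because $\partial(a) = d(a) + (a+a^2)(1+\sigma_n)$ and the last coface has been modified only by a factor of augmentation $1$, the induced differential on $K^*$ closely resembles the $\delta_\sigma$ of Proposition~\ref{prop-application-homology} — indeed the remark after Proposition~\ref{prop-multiplicative} flags that $a+a^2$ ``replaces $a$ at first order''.

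The heart of the matter is then a small-degree computation. For $n \ge 3$ one wants to show the cohomology vanishes; the idea is to exploit that the complex $(A^*, \partial)$ is a deformation of $(A^*, d)$ — whose cohomology is zero by Lemma~\ref{lem-goodwillie} — the difference being governed by the nonlinear term $a \mapsto a^2$. In characteristic $2$ the Frobenius $a \mapsto a^2$ is additive, so $a \mapsto a + a^2$ is an additive (in fact $\f_2$-linear) endomorphism of $k$, which means $\partial$ is genuinely a homomorphism of abelian groups and the whole apparatus of Lemma~\ref{lem-iterate} and Proposition~\ref{prop-coho-iso} applies verbatim. One expects that applying the iteration Lemma enough times collapses $K^*$ in degrees $\ge 3$, reducing everything down to a two-term complex computing $H^2$; the degrees $n=0,1$ are handled by the explicit low-degree remarks already cited in the introduction ($H^0 = 0$, $H^1 \cong \z/2$). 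The identification $H^2 \cong k / \{x + x^2 \mid x \in k\}$ should fall out as the cokernel (and kernel-triviality) of the $\f_2$-linear map $x \mapsto x + x^2$ appearing in the bottom of the reduced complex: a $1$-cochain is $a \in A^0 = k$, a $2$-cocycle condition becomes vacuous after reduction, and two such classes agree iff they differ by something of the form $x + x^2$.

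The main obstacle, I expect, is \emph{bookkeeping of signs and of the multiplicative-versus-additive notation} while carrying out the two-step reduction: Proposition~\ref{prop-multiplicative} already performed one passage from $(A^*_\mm, d)$ to $(A^*, \partial)$ using the logarithm $S(A^n) \cong A^{n-1}$, and Lemma~\ref{lem-iterate} must then be applied (perhaps iterated) inside the now-additive complex $(A^*, \partial)$, tracking how $d^n$ and $d^{n+1}$ interact with the modified coface $d^{n+1}(a)e^{1+a}$. A secondary subtlety is making sure that in degrees $\ge 3$ the residual complex really is acyclic and does not leave stray cohomology: this is where the comparison with the acyclic $(A^*, d)$ must be made precise, presumably by an explicit contracting homotopy adapted from the proof of Lemma~\ref{lem-goodwillie}, corrected by the nonlinear term. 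Once the dust settles, the statement that the groups vanish in degrees $\ge 2$ when $k$ is algebraically closed is immediate, since then $x \mapsto x + x^2$ is surjective on $k$ (every element is a value of a separable polynomial of degree $2$), so $k/\{x+x^2\} = 0$.
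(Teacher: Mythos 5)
Your scaffolding is the right one (Proposition~\ref{prop-multiplicative} to pass to $(A^*,\partial)$, then Lemma~\ref{lem-iterate} to restrict to the kernel $K^*$ of the last augmentation), but the decisive step for $n\ge 3$ is missing, and it is exactly the step you defer to ``an explicit contracting homotopy adapted from Lemma~\ref{lem-goodwillie}, corrected by the nonlinear term.'' No homotopy correction is needed, and trying to build one is not a proof. The key observation is that for $a\in A^n$ with $n\ge 1$ one has $a^2=\varepsilon_n(a)^2$ (squaring in $k[(\z/2)^n]$ kills all group elements since $\sigma_i^2=1$ and we are in characteristic $2$), so on $K^n=\ker\varepsilon_n$ every element satisfies $a^2=0$. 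Hence on $K^*$, in degrees $\ge 1$, the differential $\partial(a)=d(a)+(a+a^2)(1+\sigma_n)$ coincides \emph{exactly} with $\delta_\sigma(a)=d(a)+a(1+\sigma_n)$ of Proposition~\ref{prop-application-homology}. Applying Lemma~\ref{lem-iterate} once on each side (not iterated, as you suggest --- one application suffices and there is no further collapse), one gets $H^{n-1}(A^*,\partial)=H^{n-1}(K^*,\partial)=H^{n-1}(K^*,\delta_\sigma)=H^{n-1}(A^*,\delta_\sigma)=0$ for $n-1\ge 2$, the last vanishing being Proposition~\ref{prop-application-homology} (which is where Lemma~\ref{lem-goodwillie} enters, already packaged). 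Note this identification fails in degree $0$ of the shifted complex, where $A^0=k$ carries no augmentation constraint and $a^2\ne 0$ in general; this is precisely why the vanishing only starts at $H^3_{sw}$ and why $H^2_{sw}$ survives --- a vague ``deformation'' comparison does not see this boundary and risks proving too much.

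Your degree-$2$ discussion is also only a gesture, and the indexing is off: since $H^2(A^*_\mm,d)=H^1(A^*,\partial)$, the relevant cocycles live in $A^1=k[\z/2]$, not in $A^0=k$, and the cocycle condition is not vacuous on all of $A^1$: one must compute $\partial(a+b\sigma_1)=(a^2+b^2)+(a+b+a^2+b^2)\sigma_2$ to see that the kernel is the diagonal $\{a+a\sigma_1\}\cong k$ (equivalently, that after restriction to $K^1$ every element is a cocycle), while the coboundaries from $A^0=k$ are $\partial(x)=(x+x^2)(1+\sigma_1)$, giving $H^2_{sw}\cong k/\{x+x^2\}$. Your final remark about algebraically closed fields (surjectivity of $x\mapsto x+x^2$) is correct and matches the paper.
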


\begin{proof}
The statements for~$n=0$ or~$n=1$ are (easy) general facts. We first
prove that~$H^n(A^*_\mm, d) = 0$ for~$n \ge 3$, which is the first case
above. 

We have seen (Proposition~\ref{prop-application-homology}) that for~$n
\ge 2$
\[ 0 = H^n(A^*, d) = H^{n-1}(A^*, \delta_\sigma  ) \, .  \]
Moreover, by applying Lemma~\ref{lem-iterate} we have 
\[ H^{n-1}(A^*, \delta ) = H^{n-1}(K^* , \delta ) \, ,   \]
where~$K^n$ is the subgroup of elements~$a\in A^n$ such
that~$\varepsilon_n(a) = 0$. (This is also part of the conclusion of
Proposition~\ref{prop-application-homology} where~$K^*$ is
denoted~$\bar A^*$.)

On the other hand we have also (Proposition~\ref{prop-multiplicative})
for~$n \ge 2$
\[ H^n(A^*_\mm, d) = H^{n-1}(A^*, \partial ) \, .  \]
By applying Lemma~\ref{lem-iterate} we have 
\[ H^{n-1}(A^*, \partial ) = H^{n-1}(K^* , \partial ) \, ,   \]
where~$K^n$ is precisely the same as above.

Now the fundamental observation is that~$a^2 = \varepsilon_n(a)^2$
for~$n\ge 1$. So for an element~$a\in K^n$, we have~$a^2 = 0$. As a
result, {\em the differentials~$\delta $ and~$\partial$ agree
  on~$K^*$}, from~$K^1$ and above. Thus~$H^r(K^*, \delta ) =
H^r(K^*, \partial)$ for~$r\ge 2$.

It follows that
\[ H^{n-1}(A^*, \delta ) \cong H^{n-1}(A^*, \partial) \, ,   \]
for~$n\ge 3$, whence the result.

Now we turn to the computation of~$H^2(A^*_\mm, d) = H^1(A^*,
\partial)$. For any element~$ a + b \sigma_1 \in A^1$, we compute that 
\[ \partial(a + b \sigma_1) = (a^2 + b^2) + (a+b + a^2 + b^2) \sigma_2 \in A^2 \, ,   \]
so the kernel of~$\partial$ in degree~$1$ is isomorphic to~$k$, and is
comprised of those elements of the form~$ a + a \sigma_1$. On the
other hand for~$x\in k = A^0$, we have
\[ \partial(x) = (x+x^2) + (x+x^2) \sigma_1 \, .   \]
This shows the announced result for~$n = 2$. When~$k$ is algebraically
closed, note that the equation~$x^2 + x = a$ always has a solution
regardless of the parameter~$a\in k$, so~$H^2$ vanishes, too.
\end{proof}

\begin{ex} \label{ex-zedmodtwo}
Let us illustrate the theorem with a simple example. Let~$q = 2^r$,
and take~$k= \f_{q}$, the field with~$q$ elements. The map~$\f_q \to
\f_q$ sending~$x$ to~$x + x^2$ has kernel~$\f_2$, so its cokernel has
dimension~$1$ over~$\f_2$. Thus~$H^2_{tw}(\f_q[\z/2]) = \f_2$.

The non-trivial element is described as follows. There is a non-zero
element~$a \in \f_q$ which is not of the form~$x + x^2$, and~$a +
a \sigma_1 $ is a representative of the non-zero class in~$H^1(A^*,
\partial)$. Via the isomorphism with~$H^2_{tw}( \f_q[\z/2])$, we obtain the
twist 
\[ F = e^{a + a \sigma_1 } = (1 + a) 1\otimes 1 + a \sigma_1\otimes 1
+ a 1 \otimes \sigma_2 + a \sigma_1 \otimes \sigma_2 \in \f_q[\z/2]^{\otimes 2}\, .  \tag{*}  \]
It is symmetric, that is~$F = F_{21}$ in common Hopf-algebraic
notation, so that the element~$R_F = F_{21} F^{-1} = 1 \otimes
1$. (Whenever~$F$ is a twist, the element~$R_F$ is always an
``$R$-matrix'', of which much more in the rest of this paper, and it
normally holds important information about~$F$.)

There is a simple way to see~$F$ in action. Whenever~$\A$ is
an~$\f_q$-algebra endowed with a~$\z/2$-action, we can twist it
using~$F$ into a new algebra~$\A_F$. A lot of information about this
is presented in~\cite{akira}, but we will keep things elementary and
only state that if~$\mu \colon \A \otimes \A \to \A$ is the original
multiplication, then it is twisted to 
\[ x * y = \mu( x \otimes y \, F) \, .   \]
That this new multiplication is associative is equivalent to~$F$ being
a twist. It also follows from {\em loc.\ cit}.\  that a fundamental
example is~$\A = \O(\z/2)$, the algebra of functions on~$\z/2$, so let
us only look at this case.

This algebra is~$2$-dimensional over~$\f_q$, with a basis given by the
constant function~$1$ and the Dirac function~$\delta $ at the neutral element
of~$\z/2$. In~$\A_F$ the unit will be unchanged (easy check), and
there remains to compute 
\[ \delta * \delta = \mu(\delta \otimes \delta F)=  a + \delta \, .   \]
So in~$\A_F$, we have a solution of~$x^2 + x = a$, namely~$x= \delta
$. It follows that~$\A_F$ is simply~$\f_{q^2}$. 

In fact, if~$F$
corresponds to {\em any}~$a \in k$ by formula (*), we will
have~$\O(\z/2)_F = \f_q[x]/(x^2 + x + a)$ (which is two copies
of~$\f_q$ when~$a$ is already of the form~$x^2 + x$ in~$\f_q$).

We recall that in characteristic~$0$, we have~$H^2_{tw}(k[\z/2]) = H^1(k,
\z/2) = k^\times / (k^\times)^2$. If~$F$ is the twist corresponding to
the class of~$a$ modulo squares, then~$\O(\z/2)_F$ is isomorphic
to~$k[\sqrt{a}] = k[x]/(x^2 - a)$.

\end{ex}



\section{The general case} \label{sec-generic-case}

We now let~$G = \left( \z/2 \right)^r$ be any elementary
abelian~$2$-group, and we let~$k$ be any ring of
characteristic~$2$. In this section we prove that the twist cohomology
of~$k[G]$ vanishes in degrees~$\ge 3$. We also give information about
the low-dimensional cohomology groups.

\subsection{The unit sphere} \label{subsec-exponential}

Let~$R$ be a ring of characteristic~$2$. In this paragraph we seek a
describtion of 
\[ S(R[G]) = \{ x \in R[G] : \varepsilon (x) = 1 \} \, .   \]
Here~$ \varepsilon \colon R[G] \to R$ is the augmentation. It will
sometimes be convenient to write~$S_R(R[G])$ for emphasis; consider
for example the group algebra~$R[G\times G] = R'[G]$ for~$R'= R[G]$,
for which the notation~$S_{R[G]}(R[G\times G])$ refers to the
augmentation $ \varepsilon \colon R[G \times G] \to R[G]$. In fact,
the following decomposition will be useful in the sequel: 
\[ S_R(R[G \times G]) = S_R(R[G]) \times S_{R[G]}(R[G\times G]) \, .  \tag{$\dagger$} \]
The proof is immediate.

A useful device for the study of~$S(R[G])$ is the {\em
  exponential}. Namely, whenever~$u \in R[G]$ satisfies~$\varepsilon
(u) = 0$, and thus~$u^2 = \varepsilon (u)^2 = 0$, we put
\[ \exp_u(a) = 1 + au \qquad (a \in R) \, .  \]
Of course~$\exp_u(a)$ is an element of~$S(R[G])$.

\begin{lem} \label{lem-prop-exp}
The exponential enjoys the following properties:
\begin{enumerate}
\item $\exp_u(a+b) = \exp_u(a) \, \exp_u(b)$.
\item $\exp_u(a) \, \exp_v(a) = \exp_{u+v} (a) \, \exp_{uv} (a^2)$. 
\end{enumerate} 
\end{lem}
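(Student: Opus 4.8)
The plan is to verify both identities by direct computation, exploiting the single key algebraic fact that whenever $u \in R[G]$ has $\varepsilon(u)=0$ we have $u^2 = \varepsilon(u)^2 = 0$, together with the characteristic $2$ hypothesis (so that $2=0$ and all cross-terms with a coefficient of $2$ disappear). First I would prove (1): expanding $\exp_u(a)\exp_u(b) = (1+au)(1+bu) = 1 + au + bu + ab\,u^2 = 1 + (a+b)u$, where the last step uses $u^2=0$; this is exactly $\exp_u(a+b)$. This is entirely routine and serves mainly to pin down the normalization.

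For (2), the essential point is that $u$ and $v$ each square to zero but $u+v$ need not, since $(u+v)^2 = u^2 + uv + vu + v^2 = uv + vu$ (again using characteristic $2$ to kill the $2uv$ term — but note $G$ abelian makes $R[G]$ commutative, so in fact $(u+v)^2 = 2uv = 0$ as well; one should double-check whether commutativity of $R[G]$ is being used here, and it is, since $G$ is abelian throughout this section). I would compute the left-hand side: $\exp_u(a)\exp_v(a) = (1+au)(1+av) = 1 + a(u+v) + a^2 uv$. The right-hand side is $\exp_{u+v}(a)\exp_{uv}(a^2) = (1 + a(u+v))(1 + a^2 uv) = 1 + a(u+v) + a^2 uv + a^3(u+v)uv$, and the final term vanishes because $(u+v)uv = u^2 v + v^2 u = 0$ (using $u^2 = v^2 = 0$). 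Hence both sides equal $1 + a(u+v) + a^2 uv$, as required. One should also note in passing that $\exp_{uv}(a^2)$ is well-defined: $uv$ has $\varepsilon(uv) = \varepsilon(u)\varepsilon(v) = 0$, so $(uv)^2 = 0$ and the exponential notation applies.

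I do not anticipate any real obstacle here — the lemma is a formal consequence of the nilpotency relations in characteristic $2$. The only point requiring a moment's care is keeping track of which terms vanish for which reason ($u^2=0$, $v^2=0$, or $2=0$), and confirming that $uv$ is itself an admissible argument for $\exp$; both are immediate. The slightly surprising shape of identity (2) — the appearance of the correction factor $\exp_{uv}(a^2)$ — is precisely what makes the exponential fail to be additive in the subscript, and this is the feature that will later force the quadratic terms $a+a^2$ to appear in the differential $\partial$ of Proposition \ref{prop-multiplicative}.
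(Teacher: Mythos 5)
Your verification is correct, and it is exactly the routine computation the paper has in mind (the paper states the lemma without proof): both identities follow from expanding the products and killing terms via $u^2=v^2=0$ and commutativity of $R[G]$, so that each side of (2) equals $1+a(u+v)+a^2uv$. Your momentary worry about $(u+v)^2$ is moot — it is not needed for the identity, and in any case any augmentation-zero element of $R[G]$ squares to zero here since $z^2=\varepsilon(z)^2$ for $G=(\z/2)^r$ in characteristic $2$.
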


Fix once and for all a basis~$\Sigma \subset G $ for~$G$ as
an~$\f_2$-vector space. For a subset~$X \subset \Sigma $, we put 
\[ u_X = \prod_{\sigma \in X} (1 + \sigma ) \, .   \]

\begin{thm} \label{thm-decomposition-exp}
For each non-empty~$X \subset \Sigma $, let~$R_X$ be a copy of the
abelian group underlying~$R$. Then the map 
\[ \begin{array}{rcl}
{\displaystyle \myexp \colon \bigoplus_{\emptyset \ne X \subset \Sigma } R_X} & \longrightarrow &
S(R[G]) \\
 (a_X)_X             & \mapsto & {\displaystyle \prod_X \exp_{u_X}( a_X) }
\end{array}\]
is an isomorphism.
\end{thm}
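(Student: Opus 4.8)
The plan is to prove that $\myexp$ is an isomorphism by induction on $r = \dim_{\f_2} G$, using the splitting $(\dagger)$ together with the product formulae for $\exp$ from Lemma~\ref{lem-prop-exp}. The base case $r = 1$ is already covered: with $\Sigma = \{\sigma\}$ the only non-empty subset is $X = \{\sigma\}$, $u_X = 1 + \sigma$, and $\myexp(a) = 1 + a(1+\sigma) = (1+a) + a\sigma = e^a$, which is the isomorphism $R \to S(R[\z/2])$ identified in \S\ref{subsec-log-exp}. For the inductive step I would write $G = H \times \langle\tau\rangle$ with $H = \langle \Sigma' \rangle$, $\Sigma = \Sigma' \cup \{\tau\}$, so that $R[G] = R[H][\langle\tau\rangle]$; and I would apply $(\dagger)$ in the form $S_R(R[G]) = S_R(R[H]) \times S_{R[H]}(R[G])$, where the second factor consists of the elements $e^c = 1 + c(1+\tau)$ with $c \in R[H]$ arbitrary (again by the $r=1$ case, now over the ring $R[H]$).

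The heart of the argument is to match the indexing set $\{\emptyset \ne X \subset \Sigma\}$ with this product decomposition. Split the non-empty subsets $X \subset \Sigma$ into those contained in $\Sigma'$ (indexing $\bigoplus R_X$ over $\emptyset \ne X \subset \Sigma'$, which by induction maps isomorphically onto $S_R(R[H])$ via the $\myexp$ for $H$) and those containing $\tau$, i.e.\ $X = Y \cup \{\tau\}$ for $Y \subset \Sigma'$ possibly empty. For the latter, $u_X = u_Y(1+\tau)$ where $u_Y \in R[H]$ (with $u_\emptyset = 1$), so $\exp_{u_X}(a_X) = 1 + a_X u_Y (1+\tau)$; these are precisely the elements $e^c$ with $c = \sum_{Y \subset \Sigma'} a_{Y\cup\{\tau\}}\, u_Y$. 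Since $\{u_Y : Y \subset \Sigma'\}$ is an $R$-basis of $R[H]$ (it is obtained from the group basis of $H$ by a unitriangular change of coordinates), the assignment $(a_{Y\cup\{\tau\}})_Y \mapsto c$ is an isomorphism $\bigoplus_{Y \subset \Sigma'} R \xrightarrow{\ \sim\ } R[H]$, hence onto the second factor $S_{R[H]}(R[G])$.

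The remaining point — and the step I expect to be the main obstacle — is that $\myexp$ genuinely respects the product decomposition, i.e.\ that $\prod_X \exp_{u_X}(a_X)$ equals the product of its ``$H$-part'' $\prod_{X \subset \Sigma'} \exp_{u_X}(a_X)$ and its ``$\tau$-part'' $\prod_{X \ni \tau} \exp_{u_X}(a_X)$, with the $H$-part landing in $S_R(R[H])$ and the $\tau$-part in the kernel of $S_R(R[G]) \to S_R(R[H])$. This is where one must be careful, because the factors $\exp_{u_X}(a_X)$ do not commute in general and their products pick up correction terms governed by part~(2) of Lemma~\ref{lem-prop-exp}: $\exp_u(a)\exp_v(a) = \exp_{u+v}(a)\exp_{uv}(a^2)$, and more generally one needs to control the reordering of an arbitrary product. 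The key simplification is that $u_X u_{X'} = u_{X \cup X'} \cdot u_{X \cap X'}$, so any product of correction terms is again of the form $\exp_{u_Z}(\text{something})$ for various $Z$; one fixes an order on the subsets (say by cardinality, then lexicographically) and checks that collecting terms never mixes the two halves — concretely, a correction term $\exp_{u_{X \cup X'}}(a_X^2 \cdots)$ with $X \subset \Sigma'$ and $X' \ni \tau$ has $X \cup X' \ni \tau$, so it stays in the $\tau$-part. Once injectivity and surjectivity are checked on each half separately (injectivity of the total map reduces, via the $H$-part being a subring and the $\tau$-part squaring to zero, to injectivity on the two halves, which is the inductive hypothesis plus the $r=1$ case), the theorem follows. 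Alternatively, one may prefer a direct dimension count plus injectivity: both sides are free $R$-modules of rank $2^r - 1$ (for $S(R[G])$: it is a coset of the augmentation ideal, which has rank $2^r - 1$), so it suffices to show $\myexp$ is injective, or that it is surjective, but the inductive bookkeeping above seems the cleanest route.
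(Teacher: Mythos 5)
Your argument is correct, but it is a genuinely different route from the one in the paper. The paper fixes the rank and argues via the quotient maps $R[G] \to R[G_X]$: it first proves a lemma characterizing the elements $\exp_{u_\Sigma}(a)$ as exactly those $x \in S(R[G])$ whose image in every proper $R[G_X]$ is $1$, then gets injectivity by looking at a subset $Z$ of minimal cardinality with $a_Z \ne 0$ and restricting to $R[G_Z]$, and surjectivity by successively multiplying a given $x$ by elements $\exp_{u_Z}(a)$ in the image of $\myexp$ until all restrictions become trivial. You instead induct on the rank $r$, using the splitting $S_R(R[G]) \cong S_R(R[H]) \times S_{R[H]}(R[G])$ of type $(\dagger)$, identifying the second factor with $R[H]$ via the rank-one exponential $c \mapsto 1 + c(1+\tau)$, and observing that $\{u_Y : Y \subset \Sigma'\}$ is an $R$-basis of $R[H]$ by a unitriangular change of basis from the group basis; this in effect upgrades the counting lemma of \S\ref{subsec-strategy} (which gives $S(R[G]) \cong R^{\oplus 2^r-1}$ by the same induction) to an explicit isomorphism in terms of the $u_X$. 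The paper's method has the side benefit of producing the restriction-to-$R[G_X]$ picture that is reused later (the filtrations of \S\ref{subsec-map-phi}), while yours is arguably more structural and shorter. One remark: the step you single out as ``the main obstacle'' is in fact immediate, since $R[G]$ is a commutative ring, so all the factors $\exp_{u_X}(a_X)$ commute and the product splits into its $H$-part and $\tau$-part with no reordering corrections; Lemma~\ref{lem-prop-exp}(2) is not needed there, only part (1) over the base ring $R[H]$, because all the $\tau$-factors share the same $u = 1+\tau$ with $u^2 = 0$. Also, the aside about concluding from a rank count ``injective or surjective suffices'' should be dropped: $\myexp$ is not an $R$-module map (the target carries its multiplicative group structure), so that shortcut is not available; but your main inductive bookkeeping does not rely on it.
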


\begin{ex} 
For~$G= \z/2 \times \z/2= \langle \sigma , \tau \rangle$, the Theorem
asserts that there is an isomorphism of abelian groups
\[ R^{\oplus 3} = R_{\{ \sigma, \tau  \}} \oplus R_{\{ \sigma  \}}
\oplus R_{\{ \tau  \}} \longrightarrow S(R[G])  \]
\[ (\lambda, \mu , \nu ) \mapsto \exp_{1+\sigma + \tau + \sigma \tau
}(\lambda ) \exp_{1 + \sigma } (\mu ) \exp_{1+\tau } (\nu ) \, .  \]
In fact the generic element 
\[ 1 + (a+b+c) + a \sigma + b \tau  + c \sigma \tau \in S(R[G])  \]
is of the form above with~$\lambda = 1 + c + (a+c)(b+c)$, $\mu = a+c$
and $\nu = b+c$. 
\end{ex}

Before we turn to the proof, we need some notation and a Lemma. For~$X
\subset \Sigma $, we let~$G_X$ denote the subgroup of~$G$ generated
by~$X$. We can see~$G_X$ as a quotient of~$G$ as well, the evident
map~$G \to G_X$ having kernel~$G_{\Sigma \smallsetminus X}$. Thus we
can speak of the image of an element~$x \in R[G]$ in~$R[G_X]$.

\begin{lem}
Let~$x \in S(R[G])$. Then~$x$ is of the form~$\exp_{u_\Sigma }(a)$
for~$a\in R$ if and only if its image in all the rings~$R[G_X]$
for~$\emptyset \ne X \varsubsetneq \Sigma $ is~$1$.
\end{lem}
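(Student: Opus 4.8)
The plan is to analyze the effect of the quotient maps $R[G] \to R[G_X]$ on the exponential building blocks $\exp_{u_Y}(a)$, and to show that the only building block surviving all the "proper" quotients is the top one. First I would record how $u_Y$ behaves under the quotient $\pi_X \colon G \to G_X$ with kernel $G_{\Sigma \smallsetminus X}$: if $Y \subset X$, then $\pi_X(u_Y) = u_Y$ (read inside $R[G_X]$), whereas if $Y \not\subset X$, i.e.\ $Y$ contains some $\sigma \notin X$, then $1 + \sigma$ maps to $1 + 1 = 0$ in $R[G_X]$ (here we use characteristic $2$), so $\pi_X(u_Y) = 0$ and hence $\exp_{u_Y}(a)$ maps to $1$. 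Combining these, for a general element $x = \prod_{\emptyset \ne Y \subset \Sigma} \exp_{u_Y}(a_Y)$ of $S(R[G])$ — which is the general form by Theorem~\ref{thm-decomposition-exp} — the image of $x$ in $R[G_X]$ is $\prod_{\emptyset \ne Y \subset X} \exp_{u_Y}(a_Y)$.

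With that formula in hand the Lemma is almost immediate. Suppose $x = \exp_{u_\Sigma}(a)$; then $a_Y = 0$ for every $Y \ne \Sigma$, so for any $X \varsubsetneq \Sigma$ the product $\prod_{\emptyset \ne Y \subset X} \exp_{u_Y}(a_Y)$ is empty and the image of $x$ in $R[G_X]$ is $1$. Conversely, suppose all these images are $1$. I would argue by induction on $|X|$, from small $X$ upward, that $a_Y = 0$ for all $Y \varsubsetneq \Sigma$: the image of $x$ in $R[G_X]$ is, by Theorem~\ref{thm-decomposition-exp} applied to the group $G_X$ (whose chosen basis is $X$), the image under $\myexp$ of the tuple $(a_Y)_{\emptyset \ne Y \subset X}$; since that image is $1$ and $\myexp$ is injective, every $a_Y$ with $Y \subset X$ vanishes. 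Running $X$ over all subsets of $\Sigma$ of size $|\Sigma| - 1$ (or indeed any proper subsets), we conclude $a_Y = 0$ for every proper non-empty $Y$, so $x = \exp_{u_\Sigma}(a_\Sigma)$, as required.

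The only genuinely delicate point is the compatibility invoked in the last paragraph: that the decomposition isomorphism of Theorem~\ref{thm-decomposition-exp} for $G$ is compatible, via $\pi_X$, with the one for $G_X$. This is exactly the content of the image formula from the first paragraph, $\pi_X\big(\prod_Y \exp_{u_Y}(a_Y)\big) = \prod_{Y \subset X} \exp_{u_Y}(a_Y)$, together with the observation that $\pi_X(u_Y) = u_Y$ (for $Y \subset X$) is precisely the building block $u_Y$ relative to the basis $X$ of $G_X$. So once the behaviour of the $u_Y$ under $\pi_X$ is pinned down, the rest is a bookkeeping argument using injectivity of $\myexp$; I expect the verification that $1+\sigma \mapsto 0$ in $R[G_X]$ whenever $\sigma$ lies in the kernel $G_{\Sigma\smallsetminus X}$ — which is where characteristic $2$ enters — to be the one step worth stating carefully.
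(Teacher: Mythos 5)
There is a genuine logical problem with your argument as it stands: it is circular relative to the role this Lemma plays in the paper. You write a general element of~$S(R[G])$ as $\prod_Y \exp_{u_Y}(a_Y)$ ``by Theorem~\ref{thm-decomposition-exp}'', and then use injectivity of~$\myexp$ for the subgroups~$G_X$. But this Lemma is stated and proved \emph{before} Theorem~\ref{thm-decomposition-exp} precisely because it is an ingredient in the proof of that Theorem: the surjectivity of~$\myexp$ is established there by reducing to elements whose images in all the~$R[G_X]$, $X \varsubsetneq \Sigma$, are~$1$, and then invoking this very Lemma (both for~$G$ and for the subgroups~$G_Z$). So the step ``which is the general form by Theorem~\ref{thm-decomposition-exp}'' assumes surjectivity of~$\myexp$, which in the paper's development rests on the statement you are trying to prove. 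The use of injectivity is harmless (the paper proves injectivity of~$\myexp$ by a direct minimal-counterexample argument that does not use the Lemma), but the surjectivity input is exactly the content the Lemma is designed to supply.

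Your intermediate computations are correct and worth keeping: $\pi_X(u_Y) = u_Y$ when $Y \subset X$, while $u_Y \mapsto 0$ when $Y \not\subset X$ because some $1+\sigma$ maps to $1+1=0$ in characteristic~$2$, hence $\pi_X\bigl(\prod_Y \exp_{u_Y}(a_Y)\bigr) = \prod_{\emptyset \ne Y \subset X} \exp_{u_Y}(a_Y)$, compatibly with the basis~$X$ of~$G_X$. To make the proof non-circular you must produce the expression of~$x$ as a product of exponentials without quoting the Theorem: for instance argue by induction on the rank of~$G$, as the paper does, writing $\Sigma = \{\sigma\} \cup \Sigma_0$, $R[G] = R'[G_{\Sigma_0}]$ with $R' = R[\z/2]$, and analysing~$x$ directly in~$R'[G_{\Sigma_0}]$ using the hypothesis on its images (the key points being that the rank-one case is immediate, that the hypothesis forces $x = 1 + a' u_{\Sigma_0}$ with $a' = a + a\sigma \in R'$, and that $(1+\sigma)u_{\Sigma_0} = u_\Sigma$). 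Alternatively, you could first prove surjectivity of~$\myexp$ independently (e.g.\ by a filtration or triangularity argument on the coefficients), but then you are in effect reproving the Lemma, so the inductive route is the natural fix.
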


\begin{proof}
The condition is clearly necessary, as~$u_\Sigma $ maps to~$0$ in all
the rings~$R[G_K]$. To prove that it is sufficient, we proceed by
induction on the rank of~$G$. For~$G = \z/2 = \{ 1, \sigma \}$, it is
certainly true that any~$x \in S(R[\z/2])$ must be of the form~$x= 1 + a + a
\sigma = \exp_{1+ \sigma }(a)$.

Now for the general case, write~$\Sigma = \{ \sigma  \} \cup \Sigma_0$
and~$H = G_{\Sigma_0}$, so that~$G = \z/2 \times H$. We have~$R[G]=
R'[H]$ for~$R'= R[\z/2]$, and we know that the result of the Lemma
holds for~$R'[H]$.

Let~$x$ be as in the Lemma. As an element of~$R'[H]$, the augmentation
of~$x$ (which is also its image under the map~$G \to G_{\{ \sigma
  \}}$) must be~$1$ by hypothesis; further, when viewed in~$S(R'[H])$
the element~$x$ still satisfies the hypotheses of the Lemma, so~$x =
\exp_{u_{\Sigma _0}}(a')= 1 + a' u_{\Sigma_0}$ for some~$a' \in R' =
R[\z/2]$. By looking at the image of~$x$ under~$G \to H$, which must
be~$1$, we see that~$a' = a + a \sigma $ for some~$a \in R$. We
observe that
\[ (1 + \sigma ) \, u_{\Sigma_0} = u_{\Sigma }\, ,   \]
so in the end~$x = \exp_{u_\Sigma }(a)$, as we wished to prove.
\end{proof}

\begin{proof}[Proof of Theorem~\ref{thm-decomposition-exp}]
We prove that~$\myexp$ is injective first. Let~$(a_X)_X$ be such
that~$\myexp (a_X)_X = 1$. Let~$Z$ be such that~$a_Z \ne 0$, if there
is such a~$Z$, and assume that~$Z$ has minimal cardinality with
respect to this property.

We consider the images in~$R[G_Z]$ of various elements. Of course the
image of~$\myexp (a_X)_X$ is~$1$. Let~$Y \subset \Sigma $. If~$Y$ is
not a subset of~$Z$, then~$u_Y$ maps to~$0$ in~$R[G_Z]$, so~$\exp_{u_Y} a_Y$
maps to~$1$. If~$Y \varsubsetneq Z$, then~$a_Y = 0$ by hypothesis, so
again~$\exp_{u_Y} a_Y$ maps to~$1$. Finally, for~$Y = Z$ the image
of~$\exp_{u_Z}(a_Z)$ is itself, so in the end~$\myexp (a_X)_X$
restrict to~$\exp_{u_Z}(a_Z) = 1$. This implies (easily) that~$a_Z =
0$, a contradiction showing that~$\myexp$ is injective.

We turn to the surjectivity. Let~$x \in S(R[G])$. If the image of~$x$
in~$R[G_X]$ is~$1$ for all the proper subsets~$X$ of~$\Sigma $,
then~$x$ is in the image of~$\myexp$ by the Lemma. 

Now let~$Z$ be a proper subset of~$\Sigma $ such that the image of~$x$
in~$R[G_Z]$ is not equal to~$1$, and assume that~$Z$ has minimal
cardinality with respect to this property. By the Lemma again, the
image of~$x$ in~$R[G_Z]$ is of the form~$\exp_{u_Z}(a)$ for some~$a
\in R$. Let~$x' = \exp_{u_Z}(a)$, viewed as an element of~$S(R[G])$,
and consider~$x_1 = xx'$. Its image in~$R[G_Z]$ is~$1$ by
construction. What is more, if the image of~$x$ is~$1$ in~$R[G_X]$ for
some~$X$, then~$Z$ is certainly not a subset of~$X$; so~$u_Z$ maps
to~$0$ in~$R[G_X]$ and~$x'$ maps to~$1$ there, as does~$x_1$.

Continuing, we form~$x_2$, $x_3$, etc, such that~$x_{i+1} = x_i x
^{(i+1)}$ with~$x ^{(i+1)}$ belonging to the image of~$\myexp$, and
such that~$x_{i+1}$ maps to~$1$ in~$R[G_X]$ whenever~$x_i$ does {\em
  and} for one extra subset. This process stops when some~$x_i$ maps
to~$1$ in all the rings corresponding to all the proper subsets
of~$\Sigma $, in which case~$x_i$ is in the image of~$\myexp$ as
already observed. We conclude that~$x$ is in the image of~$\myexp$,
and this map is surjective.
\end{proof}

\subsection{The map~$\phi$} \label{subsec-map-phi}

We shall now study a certain map~$\phi$ from~$R[G]$ to~$R[G\times
  G]$. It is defined as the product of the inclusion and the diagonal,
that is
\[ \phi(x) = x \, \Delta (x) \in R[G \times G] \, .   \]
Whenever~$x \in S(R[G])$, it is clear that~$\phi(x)$ lies in the
subgroup~$S_{R'}(R'[G])$ where~$R' = R[G]$, as in ($\dagger$). Thus we
shall consider~$\phi$ as a map 
\[ \phi \colon S(R[G]) \longrightarrow S(R'[G]) \, .   \]
We can apply Theorem~\ref{thm-decomposition-exp} to both~$S(R[G])$
and~$S(R'[G])$, of course. The map~$\phi$ induces, via the~$\myexp$
isomorphisms, a map~$\tilde \phi$. The latter does not preserve the
direct sum decompositions, but it is compatible with certain
filtrations. 

For each~$\ell \ge 1$, let~$S(R[G])_\ell$ denote the image of the sum of
all the~$R_X$ with~$X$ of cardinality~$\le \ell$, under the map of the
Theorem. Also, let~$S(R[G])_X$ denote the image of the sum of all
the~$R_Y$ with~$Y \subset X$.

\begin{lem} \label{lem-phi-preserves-filtrations}
We have 
\[ \phi(S(R[G])_\ell) \subset  S(R'[G])_\ell   \]
and 
\[ \phi( S(R[G])_X ) \subset   S(R'[G])_X   \, .   \]
\end{lem}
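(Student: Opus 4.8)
The plan is to exploit the fact that $\phi$ is in fact a group homomorphism, and then to reduce both inclusions to a single computation on the standard generators supplied by Theorem~\ref{thm-decomposition-exp}.

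First I would observe that $\phi\colon S(R[G])\to S(R'[G])$ is a homomorphism of abelian groups: since the ring $R[G\times G]$ is commutative, $\phi(xy)=xy\,\Delta(x)\Delta(y)=\bigl(x\Delta(x)\bigr)\bigl(y\Delta(y)\bigr)=\phi(x)\phi(y)$. Next, because $\myexp$ is an isomorphism of groups, it carries the subgroup $\bigoplus_{|Y|\le\ell}R_Y$ onto $S(R[G])_\ell$ and the subgroup $\bigoplus_{Y\subset X}R_Y$ onto $S(R[G])_X$; hence $S(R[G])_\ell$ is generated as a group by the elements $\exp_{u_Y}(a)$ with $|Y|\le\ell$ and $a\in R$, and $S(R[G])_X$ is generated by those with $Y\subset X$. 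Consequently both inclusions of the Lemma follow from the single claim
\[ \phi\bigl(\exp_{u_Y}(a)\bigr)\in S(R'[G])_Y\qquad\text{for every }\emptyset\ne Y\subset\Sigma\text{ and every }a\in R, \]
because $S(R'[G])_Y\subseteq S(R'[G])_{|Y|}\subseteq S(R'[G])_\ell$ when $|Y|\le\ell$, while $S(R'[G])_Y\subseteq S(R'[G])_X$ when $Y\subset X$.

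To prove the claim I would identify $S(R'[G])_Y$ with the image of $S_{R'}(R'[G_Y])$ under the inclusion $R'[G_Y]\hookrightarrow R'[G]$, where $G_Y$ denotes the subgroup of $G$ generated by $Y$. Indeed, applying Theorem~\ref{thm-decomposition-exp} to the group $G_Y$ over the ring $R'$ yields an isomorphism $\bigoplus_{\emptyset\ne Z\subset Y}R'_Z\stackrel{\cong}{\longrightarrow}S_{R'}(R'[G_Y])$ built from the very same elements $u_Z$ and exponentials $\exp_{u_Z}$ that enter the decomposition of $S(R'[G])$, so its image is exactly $S(R'[G])_Y$. It then suffices to check that $\phi(\exp_{u_Y}(a))$ lies in the subring $R'[G_Y]$ and has $R'$-augmentation equal to $1$. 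The second point is already recorded in the text (the values of $\phi$ lie in $S_{R'}(R'[G])$; concretely the $R'$-augmentation of $\phi(x)$ is $x^2$, which is $1$ for $x\in S(R[G])$). For the first, write $\exp_{u_Y}(a)=1+au_Y$ with $u_Y\in R[G]$, so that
\[ \phi\bigl(\exp_{u_Y}(a)\bigr)=\bigl((1+au_Y)\otimes 1\bigr)\,\bigl(1\otimes 1+a\,\Delta(u_Y)\bigr),\qquad \Delta(u_Y)=\prod_{\sigma\in Y}(1\otimes 1+\sigma\otimes\sigma); \]
in the second tensor factor of $\Delta(u_Y)$ only $1$ and the elements $\sigma\in Y$ occur, so $\Delta(u_Y)\in R'[G_Y]$, and therefore so does the whole product.

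I do not anticipate a genuine obstacle. The one point deserving careful wording is the identification of $S(R'[G])_Y$ with $S_{R'}(R'[G_Y])$: one must check that the decomposition of Theorem~\ref{thm-decomposition-exp} is compatible with the inclusion $R'[G_Y]\hookrightarrow R'[G]$. This is immediate from the definition of $\myexp$, since for $Z\subset Y$ the element $u_Z$ and the exponential $\exp_{u_Z}$ are literally the same whether computed in $R'[G_Y]$ or in $R'[G]$.
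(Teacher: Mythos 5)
Your proof is correct and takes essentially the same route as the paper's: the paper's short argument (deduce the first inclusion from the second, then treat $S(R[G])_X$ by naturality of all the maps with respect to the inclusion $G_X \to G$) is precisely your reduction to the subgroup generated by $Y$, which you carry out explicitly on the generators $\exp_{u_Y}(a)$ using the formula for $\Delta(u_Y)$ and the identification of $S(R'[G])_Y$ with $S_{R'}(R'[G_Y])$. The only difference is the level of detail, not the idea.
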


\begin{proof}
The first statement follows from the second. There is nothing to prove
if~$X = \Sigma $. If not, consider the subgroup~$G_X \subset G$
spanned by~$X$, and appeal to the naturality of all the maps in sight
with respect to the inclusion~$G_X \to G$.
\end{proof}

There is a canonical isomorphism, for~$X$ of cardinality~$\ell$,
\[ R_X \cong \frac{ {}S(R[G])_X} { S(R[G])_X \cap S(R[G])_{\ell-1} } \,
,  \]
induced by~$\exp_{u_X}$. Thus~$\phi$ induces a map 
\[ \phi_X \colon R_X \to R'_X  \]
which we wish to describe explicitly. Let~$m_X$ be the product of the
elements of~$X$.

\begin{prop} \label{prop-phiX-explicit}
Let~$a \in R_X$ be such that~$a^2 = 0$. Then 
\[ \phi_X(a) = a \, m_X \, .   \]
\end{prop}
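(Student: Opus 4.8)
The plan is to evaluate $\phi$ explicitly on the generator $x=\exp_{u_X}(a)=1+a\,u_X$ of $S(R[G])_X$ and to recognise the result, modulo the next filtration step, as $\exp_{u_X}(a\,m_X)$. First I unravel the definition: $\phi(x)=(x\otimes 1)\,\Delta(x)$, where $x\otimes 1$ denotes the image of $x$ under the first-copy inclusion $R[G]\hookrightarrow R[G\times G]$. Put $u=u_X\otimes 1=\prod_{\sigma\in X}\bigl(1+(\sigma,1)\bigr)$ and $v=\Delta(u_X)=\prod_{\sigma\in X}\bigl(1+(\sigma,\sigma)\bigr)$; both square to zero and have trivial $R$-augmentation, so $x\otimes 1=\exp_u(a)$, $\Delta(x)=\exp_v(a)$, and Lemma~\ref{lem-prop-exp}(2) gives
\[
\phi(x)=\exp_u(a)\exp_v(a)=\exp_{u+v}(a)\,\exp_{uv}(a^2)=\exp_{u+v}(a)=1+a(u+v),
\]
the hypothesis $a^2=0$ annihilating the factor $\exp_{uv}(a^2)$.

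The core computation is the expansion of $u+v$ by means of the characteristic-$2$ identity
\[
1+(\sigma,\sigma)=\bigl(1+(\sigma,1)\bigr)+(\sigma,1)\bigl(1+(1,\sigma)\bigr).
\]
Multiplying this over $\sigma\in X$ and expanding, the term in which one always selects the summand $1+(\sigma,1)$ equals $u$, and it cancels the $u$ added to $v$; what survives is
\[
u+v=\sum_{\emptyset\ne S\subseteq X} c_S\,u'_S,\qquad u'_S=\prod_{\sigma\in S}\bigl(1+(1,\sigma)\bigr),\qquad c_S=\Bigl(\prod_{\sigma\in X\setminus S}\bigl(1+(\sigma,1)\bigr)\Bigr)\Bigl(\prod_{\sigma\in S}(\sigma,1)\Bigr).
\]
For $S\ne\emptyset$ the element $u'_S$ is nothing but the datum $u_S$ attached to $S$ in the decomposition of $S(R'[G])$, $R'=R[G]$, provided by Theorem~\ref{thm-decomposition-exp}, while $c_S$ is supported on the first copy and hence lies in $R'$. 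In particular the $S=X$ term is $c_X\,u'_X=m_X\,u'_X$, since $c_X=\prod_{\sigma\in X}(\sigma,1)=m_X$, and $u'_X$ is the datum $u_X$ used to define $\phi_X$ on the target.

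To finish, multiply $\phi(x)=1+a\,m_X u'_X+a\sum_{\emptyset\ne S\subsetneq X}c_S u'_S$ by the inverse of the candidate answer, $\exp_{u'_X}(a\,m_X)^{-1}=1+a\,m_X u'_X$ (an element of order $2$): since $a^2=0$ all cross terms vanish, leaving $1+a\sum_{\emptyset\ne S\subsetneq X}c_S u'_S=\prod_{\emptyset\ne S\subsetneq X}\exp_{u'_S}(a\,c_S)$. This product lies in $S(R'[G])_{\ell-1}$, every factor being indexed by a set of cardinality $\le\ell-1$. Hence $\phi(\exp_{u_X}(a))\equiv\exp_{u_X}(a\,m_X)$ modulo $S(R'[G])_{\ell-1}$; combined with Lemma~\ref{lem-phi-preserves-filtrations} (which keeps everything inside $S(R'[G])_X$) and with the fact that the isomorphisms of Theorem~\ref{thm-decomposition-exp} respect the filtrations, this says precisely that $\phi_X(a)=a\,m_X$.

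The step I expect to be the true obstacle is organisational rather than computational: one has to keep the two copies of $G$ inside $R[G\times G]=R'[G]$ rigorously apart — the ``inclusion'' copy, through which $c_S$ becomes an element of $R'$, and the ``diagonal/second'' copy, whose generators $1+(1,\sigma)$ produce the $u'_S$ and in particular pin down which element $u_X$ governs $\phi_X$ on the target side. Once the displayed identity for $1+(\sigma,\sigma)$ is in hand and the square-zero hypothesis $a^2=0$ is invoked to discard every nonlinear term, the arithmetic is brief.
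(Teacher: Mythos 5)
Your proof is correct, and it takes a genuinely more direct route than the paper. The paper's argument is a reduction to the singleton case: it first computes $\phi(\exp_{1+\sigma}(a))$ by hand, then uses the symmetric functions $s_1,\dots,s_\ell$ and repeated applications of Lemma~\ref{lem-prop-exp}(2) to show $\exp_{u_X}(a)\equiv \exp_{1+m_X}(a)$ modulo $S(R[G])_{\ell-1}$ on the source side, and finally transports the rank-one computation back through that congruence. You instead compute $\phi(\exp_{u_X}(a))$ in closed form: writing $\phi(x)=(x\otimes 1)\Delta(x)=\exp_{u+v}(a)$ (legitimate, since what Lemma~\ref{lem-prop-exp}(2) really needs is $u^2=v^2=0$, which your elements satisfy because their full $R$-augmentations vanish, even though their $R'$-augmentations do not), and then expanding $u+v=\sum_{\emptyset\ne S\subseteq X}c_S\,u'_S$ via the identity $1+(\sigma,\sigma)=(1+(\sigma,1))+(\sigma,1)(1+(1,\sigma))$. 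This exhibits $\phi(\exp_{u_X}(a))$, for $a^2=0$, as the explicit product $\prod_{\emptyset\ne S\subseteq X}\exp_{u'_S}(a\,c_S)$, whose top factor is $\exp_{u'_X}(a\,m_X)$ and whose remaining factors visibly lie in $S(R'[G])_X\cap S(R'[G])_{\ell-1}$; passing to the subquotient $R'_X$ then gives $\phi_X(a)=a\,m_X$ exactly as needed. What your version buys is a complete formula for $\tilde\phi$ on square-zero coefficients (all the lower-order components $c_S$, not just the leading one), with no need for the intermediate congruence (**) or a separate base case; what the paper's version buys is lighter bookkeeping of the two copies of $G$, since all the manipulation happens on the source side before $\phi$ is ever applied. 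Note that, like the paper's proof, your first step already uses $a^2=0$ (to kill $\exp_{uv}(a^2)$), so this does not touch the question raised in Remark~\ref{rmk-computer}.
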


Note that~$m_X \in G$ and~$a m_X$ is indeed an element of~$R'=
R[G]$. 

\begin{rmk} \label{rmk-computer}
It is very likely that~$\phi_X(a) = a \, m_X$ for all~$a \in R_X$
without restriction, as soon as the cardinality of~$X$ is at least~$2$
(though definitely not when~$X$ is reduced to one element). Computer
calculations have confirmed this when~~$2 \le |X| \le 7$.
\end{rmk}

\begin{proof}
The case when~$X= \{ \sigma  \}$ is both simple and important for the
general case. In this situation we have~$u = u_X = 1 + \sigma $, and
we need to consider~$\phi(x)$ for 
\[ x= \exp_u(a) = 1 + a + a \sigma \, .   \]
Direct calculation yields then 
\[ \phi(x) = 1 + (a \sigma_1 + a^2 \sigma_1 + a^2)(1 + \sigma_2) =
\exp_{1 + \sigma_2}(a \sigma_1 + a^2 \sigma_1 + a^2) \, ,   \]
were the elements of~$G \times G$ are decorated with indices. So if we
assume that~$a^2 = 0$ we have indeed
\[ \phi(\exp_{u_X}(a)) =  \exp_{u_X}(a \sigma ) \tag{*} \]
where~$u_X$ is interpreted (slightly) differently on either side of
this equation.

We turn to the general case. Let~$X = \{ x_1, \ldots, x_\ell \}$, and
let~$s_i$ be the~$i$-th symmetric function in the~$x_j$'s (so
that~$s_\ell = m_X$). We have~$u = u_X = 1 + s_1 + \cdots + s_\ell$. The idea is
to replace~$u$ by~$1 + s_\ell$ and reduce to the case~$\ell=1$. 

To see this, start by observing that 
\[  \exp_u (a) \, \exp_{1+s_\ell} (a) = \exp_{s_1 + \cdots + s_{\ell-1}} (a)
\, , \]
from Lemma~\ref{lem-prop-exp} (2) (either since~$a^2=0$ or since~$u(1
+ s_\ell) = 0$). Call~$y$ the right hand side. By a repeated use of
Lemma~\ref{lem-prop-exp} (2), we see that we may write~$y$ as a
product of terms of the form~$\exp_v (a)$ with~$v$ in
the~$\f_2$-subalgebra of~$R[G]$ generated by some of the~$x_i$'s, but
always less than~$\ell$ of them, so~$y$ lies in~$S(R[G])_{\ell-1}$. Finally
\[ \exp_u(a) = \exp_{1 + s_\ell}(a) ~\textnormal{mod}~ S(R[G])_{\ell-1} \, .  \tag{**} \]

On the other hand we can compute the value of~$\phi( \exp_{1 + s_\ell}
(a))$ by (*) (applied to the case~$X = \{ s_\ell \}$):
\[ \phi( \exp_{1+s_\ell}(a)) = \exp_{1+s_\ell} (a s_\ell)  \, .   \]
(Here the cautious reader can rewrite this with indices~$s_{\ell, 1}$
and~$s_{\ell,2}$ if she wishes to distinguish between the two.) Working
with (**) backwards yields the result.
\end{proof}

We conclude with some remarks about the compatibility of~$\phi$ with
augmentation maps. We write~$\varepsilon \colon R[G] \to R $ for the
usual augmentation. Keeping the notation~$R' = R[G]$, we point out
that the construction of~$R[G]$ is natural in~$R$, so
that~$\varepsilon \colon R' \to R$ induces a map~$e \colon R'[G] \to
R[G]$. (If we think of~$R'[G]$ as~$R[G]\otimes_R R[G]$, then we
have~$e(\sigma \otimes \tau ) = \tau$.) It is immediate that

\begin{lem} \label{lem-phi-augmentation}
For all~$x \in R[G]$, one has $e\circ \phi (x) = x$.
\end{lem}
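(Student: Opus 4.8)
The plan is to reduce the identity to the counit axiom of the Hopf algebra~$R[G]$, after observing that the inclusion half of~$\phi$ is killed by~$e$ on the unit sphere. Writing~$\phi(x)=\iota(x)\,\Delta(x)$, where~$\iota(x)=x\otimes 1$ is the inclusion into the first tensor factor and~$\Delta$ is the comultiplication, I would first pin down the conventions: under the identification~$R[G\times G]=R'[G]$ the first factor is the coefficient ring~$R'=R[G]$, so that~$e=\varepsilon\otimes\mathrm{id}$ is precisely the algebra homomorphism recorded by~$e(\sigma\otimes\tau)=\tau$. Since~$\phi$ was set up as a map with source~$S(R[G])$, I would prove the identity for~$x$ in that sphere, where~$\varepsilon(x)=1$.

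I would then isolate the two ingredients. The first is the vanishing of the inclusion term: $e(\iota(x))=e(x\otimes 1)=\varepsilon(x)\cdot 1=1$, using~$\varepsilon(x)=1$. The second, which is the heart of the matter, is that~$e$ is a one-sided inverse to the diagonal, $e\circ\Delta=\mathrm{id}$; this is exactly the counit law~$(\varepsilon\otimes\mathrm{id})\circ\Delta=\mathrm{id}$, visible on a group element as~$e(g\otimes g)=\varepsilon(g)\,g=g$ and extended~$R$-linearly.

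With these in hand the conclusion follows by the multiplicativity of the algebra map~$e$:
\[ e(\phi(x))=e(\iota(x))\,e(\Delta(x))=1\cdot x=x \, . \]
Thus the lemma is, as the author indicates, immediate from the counit axiom, once the inclusion factor has been absorbed by the defining condition of the unit sphere.

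The step I expect to demand the most attention is the tensor-factor bookkeeping in~$R[G\times G]=R'[G]$. One must be certain that~$e$ contracts the same factor---the coefficient copy~$R'$---into which~$\iota$ places~$x$; only then does~$e\circ\iota$ supply the augmentation and~$e\circ\Delta$ return~$x$, whereas contracting the other factor would yield~$x^{2}$ instead. A quick cross-check is to apply~$e$, which sends~$\sigma_1\mapsto 1$ and~$\sigma_2\mapsto\sigma$, to the explicit element~$\phi(1+a+a\sigma)$ obtained in the proof of Proposition~\ref{prop-phiX-explicit}; the result is~$1+a+a\sigma$, confirming both the conventions and the identity.
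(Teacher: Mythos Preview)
Your argument is correct and is exactly the natural unpacking of the word ``immediate'' preceding the lemma; the paper gives no separate proof. You are also right to restrict to~$x\in S(R[G])$: in general one only gets $e\circ\phi(x)=\varepsilon(x)\,x$, so the phrase ``for all~$x\in R[G]$'' in the statement is a small slip, and your reduction to the counit axiom $(\varepsilon\otimes\mathrm{id})\circ\Delta=\mathrm{id}$ together with $e(x\otimes 1)=\varepsilon(x)=1$ is precisely what is intended.
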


We summarize the notation of this section in a commutative diagram. 
\[ \begin{CD}
{\displaystyle \bigoplus_X R_X} @>{\myexp}>> S(R[G]) \\
@V{\tilde \phi}VV           @VV{\phi}V \\
{\displaystyle \bigoplus_X R'_X} @>{\myexp}>> S(R'[G]) \\
@V{\oplus \varepsilon }VV       @VV{e}V\\
{\displaystyle \bigoplus_X R_X} @>{\myexp}>> S(R[G]) 
\end{CD}
  \]
The horizontal maps are isomorphisms, and the compositions of the
vertical maps are identities.

\subsection{Vanishing of the twist cohomology}

\begin{thm} \label{thm-big-mama-vanishing}
For~$n \ge 3$, we have~$H^n(A^*_\mm, d) = 0$.
\end{thm}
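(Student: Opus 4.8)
The plan is to carry out, for the multiplicative cosimplicial group $A^*_\mm$, the general‑$r$ version of the argument already used for $\z/2$ in Theorem~\ref{thm-main-zedmodtwo}: run the two‑step reduction of \S\ref{sec-homology} and compare the outcome with the additive complexes of Proposition~\ref{prop-application-homology}, whose cohomology we already know to vanish. First I would apply Proposition~\ref{prop-coho-iso} to $A^*_\mm$, replacing it with the subcomplex $(B^*,d)$ where $B^n = S(A^n)$ is the unit sphere for the augmentation $A^n = A^{n-1}[G]\to A^{n-1}$ (the last codegeneracy, singled out in \S\ref{subsec-strategy}). Theorem~\ref{thm-decomposition-exp}, applied with $R = A^{n-1}$, then identifies $B^n$ with $\bigoplus_{\emptyset\ne X\subset\Sigma} A^{n-1}_X$ through $\myexp$; after the customary shift this produces a cochain complex $(C^*,\partial)$ with $C^n = \bigoplus_X A^n_X$ and $H^n(C^*,\partial)\cong H^{n+1}(A^*_\mm,d)$, where $\partial$ is the transport of $d$ through $\myexp$.

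Next I would make $\partial$ explicit using Lemma~\ref{lem-diff-explicit}. The stable cofaces $\bar d^i = d^i$ do not involve the last tensor slot, so they commute with $\myexp$ and act diagonally, as $(a_X)_X\mapsto (d^i(a_X))_X$. The only other contribution is the corrected coface of shape $d^n - d^{n+1}$; restricted to the unit spheres this is precisely the map $\phi(x) = x\,\Delta(x)$ studied in \S\ref{subsec-map-phi} (replacing $d^n(d^{n+1})^{-1}$ by $d^n d^{n+1}$ costs nothing since the unit sphere is an elementary abelian $2$-group). By Lemma~\ref{lem-phi-preserves-filtrations}, transporting $\phi$ through $\myexp$ gives a map triangular for the filtration $S(-)_\ell$, and by Proposition~\ref{prop-phiX-explicit} its associated graded in the slot $X$ is $a\mapsto a\,m_X$ — as soon as $a^2 = 0$; for a slot with $|X| = 1$ the case $X = \{\sigma\}$ of that same proof shows it is $a\mapsto a\sigma$ up to a term divisible by $a^2$, exactly the discrepancy between $\partial$ and $\delta_\sigma$ observed after Proposition~\ref{prop-multiplicative}. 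So, up to higher‑order corrections, the $X$‑th graded piece of $\partial$ is the additive differential $\delta_{m_X}$ of Proposition~\ref{prop-application-homology}.

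Since Proposition~\ref{prop-phiX-explicit} is only proved under the hypothesis $a^2 = 0$ (and Remark~\ref{rmk-computer} is only a conjecture beyond $|X|\le 7$), these corrections have to be killed by a second reduction. The correction terms lie in the kernel of the relevant degeneracy — this follows from Lemma~\ref{lem-phi-augmentation} together with $\varepsilon(\exp_u(a)) = 1$, as in the proof of Proposition~\ref{prop-multiplicative} — so $(C^*,\partial)$ meets the hypotheses of Lemma~\ref{lem-iterate} and $H^n(C^*,\partial) = H^n(K^*,\partial)$, where $K^n$ is the kernel of that degeneracy. On $K^*$ one has $a^2 = \varepsilon_n(a)^2 = 0$ in every slot once $n\ge 1$, so all the higher‑order corrections vanish at once: the filtration still makes $\partial|_{K^*}$ triangular, and from degree $1$ on its $X$‑th graded piece is literally the complex $(\bar A^*,\delta_{m_X})$ of Proposition~\ref{prop-application-homology}.

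It then remains to assemble the conclusion. Each $(\bar A^*,\delta_{m_X})$ has vanishing cohomology in every degree by Proposition~\ref{prop-application-homology}, so a finite filtered complex whose graded pieces are cohomologically trivial from degree $1$ onwards has itself vanishing cohomology in degrees $\ge 2$; induction on the length $\dim_{\f_2}G$ of the filtration, or the associated spectral sequence, does this. Hence $H^n(K^*,\partial) = 0$ for $n\ge 2$, so $H^n(C^*,\partial) = 0$ for $n\ge 2$, so $H^m(A^*_\mm,d) = H^{m-1}(C^*,\partial) = 0$ for $m\ge 3$. I expect the real work to be bookkeeping rather than new ideas: the delicate points are verifying that the transported $\phi$ preserves $K^*$ and is triangular there with graded pieces \emph{equal} to $\delta_{m_X}$ (not merely conjugate to them by a filtered automorphism), and tracking the two degree shifts carefully enough that the exceptional degrees come out to be exactly $n = 0, 1, 2$ — the range in which, as Theorem~\ref{thm-main-zedmodtwo} already shows for $r = 1$, the cohomology really can be non‑zero.
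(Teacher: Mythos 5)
Your proposal is correct and is essentially the paper's own argument: the same reduction via Proposition~\ref{prop-coho-iso} and the $\myexp$ isomorphism of Theorem~\ref{thm-decomposition-exp}, the same identification of the corrected last coface with $\phi$ and of its graded pieces with $a\mapsto a\,m_X$ via Lemma~\ref{lem-phi-preserves-filtrations} and Proposition~\ref{prop-phiX-explicit}, and the same use of Lemma~\ref{lem-iterate} (justified by Lemma~\ref{lem-phi-augmentation}) to force $a^2=0$ before comparing with $\delta_{m_X}$ and invoking Proposition~\ref{prop-application-homology}. The only deviation is cosmetic: you apply Lemma~\ref{lem-iterate} once to the whole complex $(C^*,\partial)$ and then filter, whereas the paper filters first and applies Lemma~\ref{lem-iterate} to each subquotient $(A^*_X,\partial_X)$; both orders are valid and rest on the same ingredients.
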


\begin{proof}
We apply Proposition~\ref{prop-coho-iso}. It follows that $H^n(A^*_\mm, d)
= H^n(B^*, d)$ where~$B^* = S_{A^{*-1}}( A^* )$ in the notation
above. Put
\[ C^* =  \bigoplus_X A_X^* \, ,   \]
where~$A^*_X$ is a copy of~$A^*$, so that the~$\myexp$ isomorphism of
Theorem~\ref{thm-decomposition-exp} provides us with an
isomorphism~$C^* \cong B^{*+1}$. We set~$\partial = \myexp^{-1} \circ
d \circ \exp$, so that our goal is to prove that~$H^n(C^*, \partial) =
0$ for~$n \ge 2$.

The differential~$\partial$ on~$C^{n-1} \cong B^n$ is the sum of
maps~$\partial^i = \myexp^{-1} \circ d^i \circ \myexp$ for~$0 \le i
\le n$ which are given by Lemma~\ref{lem-diff-explicit}. For~$0 \le i
< n$, it turns out that~$\partial^i$ coincides with~$d^i$, or rather a
direct sum of copies of~$d^i$ indexed by the subsets~$X$. For~$i=n$,
we have~$\partial^n= \myexp^{-1} \circ \phi \circ \myexp = \tilde
\phi$ in the notation of \S\ref{subsec-map-phi}, where~$R= A^{n-1}$. 

From Lemma~\ref{lem-phi-preserves-filtrations}, it follows that~$C^*$
has a filtration by subcomplexes (preserved by~$\partial$), and the
subquotients are cochain complexes of the form~$(A^*_X,
\partial_X)$. In degree~$n$ the underlying abelian group is~$A^n_X$, a
copy of~$A^n$, and the differential~$\partial_X: A^{n-1}_X \to A^n_X$
is given by
\[ \partial_X = \sum_{i=0}^{n-1} d^i + \phi_X \, , \tag{*} \]
where the notation~$\phi_X$ is as in
Proposition~\ref{prop-phiX-explicit} (again for~$R= A^{*-1}$). We will
prove that~$H^n(A^*_X, \partial_X) = 0$ for~$n \ge 2$ (and for
each~$X$), which implies the Theorem from the long exact sequences in
cohomology.

From Lemma~\ref{lem-phi-augmentation}, we see that the
complex~$(A^*_X, \partial_X)$ satisfies the hypotheses of
Lemma~\ref{lem-iterate}. As a result, in order to compute its
cohomology, we may restrict to the subgroup~$\bar A^*_X$ of those
elements~$a$ with~$\varepsilon (a) = 0$. In degree~$n \ge 1$, we
have~$a^2 = \varepsilon (a)^2$, so these elements satisfy~$a^2 = 0$.
(Recall that in degree~$0$ we have~$A^0_X = k$ and the
``augmentation'' is the zero map, so we can draw no such conclusion).

We can thus use Proposition~\ref{prop-phiX-explicit}. Together with
(*), it implies for~$a \in \bar A^n_X$ and~$n \ge 1$ that 
\[ \partial_X(a) = d(a) + a(1 + m_X) \, .   \]
In other words, in degrees~$n \ge 1$, the differential~$\partial_X$
on~$\bar A^*_X$ coincides with~$\delta_{m_X}$ considered in
Proposition~\ref{prop-application-homology}. By that Proposition, the
cohomology does vanish in degrees~$\ge 2$.  
\end{proof}

\subsection{The Sweedler cohomology groups}

In this section we prove the following result.

\begin{thm} \label{thm-main-elemab}
Let~$k$ be a field of characteristic~$2$. The twist cohomology
of~$k[\left(\z/2 \right)^r]$, or the Sweedler cohomology
of~$\O_k(\left(\z/2 \right)^r)$, is given by
\[ H^n_{sw} (\O_k(\left(\z/2 \right)^r)) = \left\{ \begin{array}{l}
0 ~\textnormal{for}~ n\ge 3 ~\textnormal{or}~ n=0 \, , \\
~\\
\left(\z/2 \right)^r ~\textnormal{for}~ n=1 \, , \\
~\\
\left( k / \{ x + x^2 : x \in k \} \right)^{\oplus r}
~\textnormal{for}~ n=2 \, . 
\end{array}\right.  \]
When~$k$ is an algebraically closed field, we have in particular $H^2_{sw}(\O_k(\left(\z/2 \right)^r)) = 0$.

\end{thm}

The statement for~$n \ge 3$ is Theorem~\ref{thm-big-mama-vanishing},
while the statements for~$n= 0$ or~$1$ are classical and easy (they
hold for any finite group~$G$). What remains is the result for~$n=2$,
which we have established in the case~$r=1$ with
Theorem~\ref{thm-main-zedmodtwo}.

Fortunately there is a Künneth-type theorem for~$H^2_{sw}$,
established in the context of ``lazy cohomology'' by Bichon and
Carnovale: see Theorem 4.8 in~\cite{bichon} which stipulates that 
\[ H^2_\ell (A \otimes B) \cong H^2_\ell(A) \times H^2_\ell(B) \times
\mathcal{ZP} (A \otimes B) \, ,   \]
for any two Hopf algebras~$A$ and~$B$. Recall that~$H^2_\ell(A) =
H^2_{sw}(A)$ when~$A$ is cocommutative. Moreover, Lemma 4.9 in {\em
  loc.\ cit}.\ describes~$\mathcal{ZP}(A \otimes B)$ as a group of Hopf
algebra homomorphisms~$A \to B^*$ (the dual of~$B$), satisfying
certain conditions. However, in our situation the following must be
noticed. 

\begin{lem} \label{lem-hopf-hom-O-kP}
Let~$G$ be any finite group, let~$P$ be a finite~$p$-group, and
let~$k$ be a field of characteristic~$p$. Then there is only one
homomorphism of Hopf algebras 
\[ \O_k(G) \longrightarrow k[P] \, ,   \]
namely the ``augmentation''~$f \mapsto f(1)1$.
\end{lem}

\begin{proof}
Let~$K$ be any algebra at all, and let~$\phi \colon \O_k(G) \to K$ be
any algebra homomorphism. Letting~$\delta_g$ denote the Dirac function
at~$g \in G$, we see that~$\phi$ is entirely determined by the
elements~$x_g = \phi(\delta_g) \in K$, which must be idempotents
summing to~$1$ and satisfying~$x_g x_h = 0$ whenever~$g \ne h$. If we
assume that the only idempotents in~$K$ are~$0$ and~$1$, then it
follows that there is one and only one~$g$ such that~$x_g = 1$ and all
other~$x_h$ are zero. Thus~$\phi(f) = f(g)1$.

Assume further that~$K$ is a Hopf algebra and that~$\phi$ is a Hopf
algebra homomorphism. Examination of the relation~$\Delta
(\phi(\delta_g)) = \phi \otimes \phi (\Delta (\delta_g))$ reveals
that~$g=1$. 

The key point is then the fact that this argument applies to~$K=
k[P]$, since the group algebra of a~$p$-group, in characteristic~$p$,
is indecomposable and thus has no other idempotents beside~$0$
and~$1$.
\end{proof}

The Lemma implies that~$\mathcal{ZP}(A\otimes B)$ is the trivial group
when~$A= \O_k(\z/2)$ and~$B= \O_k(\left( \z/2  \right)^r)$. Thus
what remains to be proved in Theorem~\ref{thm-main-elemab} follows
from Theorem~\ref{thm-main-zedmodtwo} by induction. 



\section{Lazy cohomology of function algebras} \label{sec-guillotkassel}

We now turn our attention to the result obtained by Kassel and the
author in~\cite{kassel}, and seek to adapt it to positive
characteristic. So now~$k$ is any field of characteristic~$p$, we
consider an arbitrary finite group~$G$, and we consider the second
lazy cohomology group~$H^2_\ell(\O(G))$ which was described at the end
of section~\ref{sec-defs}.

\subsection{Twists and~$R$-matrices}

Let~$F$ be a Drinfeld twist on the Hopf algebra~$\h$. If we put 
\[ R_F = F_{21} F^{-1}  \, ,  \]
then~$R_F \in \h \otimes \h$ is an~$R$-matrix. In~\cite{kassel}, we
have exploited the fact that, for~$\h = k[G]$ with~$G$ a finite group,
the~$R$-matrix~$R_F$ essentially determines~$F$ up to equivalence (a
more precise statement follows). What is more, a result of Radford
(\cite{radford}) shows that any~$R$-matrix at all for~$k[G]$ lives in fact in~$k[A]
\otimes k[A]$ where~$A$ is an abelian, normal subgroup of~$G$.

These results are valid regardless of the characteristic of~$k$, and
in order to extend the main theorem in~\cite{kassel} we are thus led
to investigate~$R$-matrices for Hopf algebras of the form~$k[A]$
where~$k$ has positive characteristic.

\subsection{$R$-matrices on abelian~$p$-group algebras}

We wish to prove the following result.

\begin{prop} \label{prop-R-matrix-char-p}
  Let~$k$ be a field of characteristic~$p$. If~$A$ is a finite
  abelian~$p$-group, then the only~$R$-matrix on the Hopf algebra~$k[A]$
  is the trivial one~$R= 1 \otimes 1$. More generally, if~$A$ is a
  finite abelian group, and if we write~$A = A_p \times A'$
  where~$A_p$ is the~$p$-Sylow subgroup of~$A$, then any~$R$-matrix
  on~$k[A]$ belongs to~$k[A'] \otimes k[A']$.
\end{prop}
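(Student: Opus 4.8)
The plan is to work over the algebraic closure $\bar k$ of $k$ and dualize. Since $A$ is abelian, $k[A]$ is commutative, and an $R$-matrix $R \in k[A] \otimes k[A]$ is in particular an invertible element. Passing to $\bar k$, I would use the fact that $\bar k[A] = \O_{\bar k}(\widehat{A})$ via the Fourier/evaluation isomorphism precisely when $|A|$ is invertible in $\bar k$ — but this is exactly what fails for the $p$-part. So instead I would argue directly on the structure of $\bar k[A]$ as a commutative Hopf algebra: writing $A = A_p \times A'$, we get $\bar k[A] = \bar k[A_p] \otimes \bar k[A']$, where $\bar k[A_p]$ is a local ring (its only maximal ideal is the augmentation ideal, since $A_p$ is a $p$-group and $\operatorname{char} \bar k = p$), in fact $\bar k[A_p]$ is the regular representation of a local Artinian Hopf algebra with nilpotent augmentation ideal. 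The key structural input is that $\bar k[A_p]$ has no nontrivial group-like or relevant invertible elements beyond scalars times... — more precisely I would exploit that its only idempotents are $0$ and $1$, whereas $\bar k[A']$ splits as a product of copies of $\bar k$.

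The core of the argument: an $R$-matrix satisfies the quasi-triangularity axioms, and for a commutative (hence trivially quasi-cocommutative after the $R$ is extracted) Hopf algebra these force strong constraints. I would first show any $R$-matrix $R$ for $\bar k[A]$ must satisfy $(\varepsilon \otimes \mathrm{id})(R) = (\mathrm{id} \otimes \varepsilon)(R) = 1$ and $(\Delta \otimes \mathrm{id})(R) = R_{13}R_{23}$, $(\mathrm{id} \otimes \Delta)(R) = R_{13}R_{12}$. Decompose $R = \sum_{\chi} e_\chi \otimes r_\chi$ using the primitive idempotents $e_\chi$ of $\bar k[A']$ (indexed by characters $\chi$ of $A'$) — here I use that $\bar k[A'] \cong \prod_\chi \bar k$ because $|A'|$ is prime to $p$. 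Plugging into the hexagon axioms, the $R_{13}R_{23}$ relation forces each $r_\chi \in \bar k[A]$ to behave multiplicatively in the $A'$-variable, pushing $R$ into $\bar k[A'] \otimes \bar k[A']$ modulo contributions from $\bar k[A_p]$; and on the $\bar k[A_p]$ factor, since the augmentation ideal is nilpotent and the relevant idempotents are trivial, the same relations force the $\bar k[A_p]$-component of $R$ to be exactly $1 \otimes 1$. This gives $R \in \bar k[A'] \otimes \bar k[A']$, and by descent (the idempotents and the equations are defined over $k$, or by a Galois-invariance argument) $R \in k[A'] \otimes k[A']$. The first assertion is the special case $A' = 1$, forcing $R = 1 \otimes 1$.

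I expect the main obstacle to be the bookkeeping on the $p$-part: showing that the nilpotence of the augmentation ideal of $\bar k[A_p]$, combined with the multiplicativity forced by $(\Delta \otimes \mathrm{id})(R) = R_{13}R_{23}$, genuinely kills every term supported on $A_p$ in either tensor slot. One clean way to organize this is to filter $\bar k[A_p]$ by powers of its augmentation ideal and run an induction: if $R \equiv 1\otimes 1$ modulo $\mathfrak m^{j}$ in the $A_p$-directions, the hexagon identity forces the next graded piece to vanish as well, because a would-be nonzero primitive-type correction in a nilpotent ideal cannot satisfy the group-like compatibility $(\Delta \otimes \mathrm{id})(R) = R_{13}R_{23}$ unless it is zero. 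The rest — the behavior on $A'$ and the descent from $\bar k$ to $k$ — is routine once this is in place.
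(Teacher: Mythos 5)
Your overall scaffolding (splitting $A = A_p \times A'$, using the primitive idempotents of $\bar k[A']$, descending from $\bar k$ to $k$) is reasonable, but the core step -- killing the $A_p$-part by a filtration induction -- has a genuine gap, and the justification you give for it is false. Writing $R = 1\otimes 1 + \rho$ with $\rho \in \mathfrak{m}\otimes\mathfrak{m}$ and passing to the associated graded of the $\mathfrak{m}$-adic filtration, the hexagon identities only force the leading term of $\rho$ to be primitive in each tensor slot of the graded Hopf algebra, and nonzero bi-primitive elements do exist, so ``the group-like compatibility'' does not force the next graded piece to vanish. Concretely, take $A = \z/2$ and $\operatorname{char} k = 2$, and set $u = 1 + \sigma$, so $u^2 = 0$ and $\Delta(u) = u\otimes 1 + 1\otimes u + u\otimes u$. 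The candidate $R = 1\otimes 1 + \lambda\, u\otimes u$ passes your linearized test: at leading order $u$ is primitive and $u\otimes u$ satisfies $(\Delta\otimes\mathrm{id})\bar\rho = \bar\rho_{13} + \bar\rho_{23}$. What actually rules it out is a comparison one filtration degree higher: $(\Delta\otimes\mathrm{id})(R)$ contains the term $\lambda\, u\otimes u\otimes u$ coming from the non-primitive part of $\Delta(u)$, while $R_{13}R_{23}$ contains only $\lambda^2\, u\otimes u\otimes u^2 = 0$, forcing $\lambda = 0$. So the mechanism is the interplay between the quadratic deviation of $\Delta$ from the graded coproduct and nilpotence -- not the linearized hexagon -- and organizing that interplay for an arbitrary abelian $p$-group is exactly the content your induction would have to supply; as stated, it is missing.

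For comparison, the paper's proof avoids this bookkeeping altogether by dualizing at the level of group schemes: the hexagon axioms say precisely that $R$ defines a bilinear map $\mathbf{A}\times\mathbf{A}\to \gm$ with $\mathbf{A} = \operatorname{Spec}(k[A])$, i.e.\ a homomorphism $\mathbf{A}\to\mathbf{A}^D$ to the Cartier dual, i.e.\ a Hopf algebra map $\mathcal{O}(A)\to k[A]$. Your observation that $k[A_p]$ has no idempotents besides $0$ and $1$ is then used for a different purpose than in your sketch: it shows that \emph{every} algebra map $\mathcal{O}(A)\to k[A]$ is one of the $|A|$ ``constant'' ones, and only the trivial constant is a map of Hopf algebras; the mixed case follows because the resulting bilinear map is trivial on $\mathbf{A}_p\times\mathbf{A}$ and $\mathbf{A}\times\mathbf{A}_p$. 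If you want to keep a purely computational route, you would in effect have to reprove this Cartier-duality statement by hand (tracking the deviation terms through the filtration), which is where the real work lies.
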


\begin{proof}
Writing~$R= \sum_{a,b} \lambda_{ab} \, a \otimes b$,
where~$\lambda_{ab} \in k$ and~$a, b \in A$, we define a map $\phi_R \colon
\O(A) \longrightarrow  k[A]$ by the formula
\[ \phi_R(f) = \sum_{a,b} \lambda_{ab} \, f(a)b \, . \tag{*}  \]
The axioms for~$R$-matrices imply that~$\phi_R$ is a homomorphism of
Hopf algebras, as the reader will check. Thus
Lemma~\ref{lem-hopf-hom-O-kP} implies that~$\phi_R(f) = f(1)1$, for
all~$f \in \O(A)$. It follows that~$R= 1 \otimes 1$.

For the general statement, one establishes that (*) gives in fact a
bijection~$R \mapsto \phi_R$ between~$R$-matrices for~$k[A]$ and
homomorphisms of Hopf algebras~$\O(A) \to k[A]$; moreover this
bijection is natural in~$A$. Once this is granted, one starts with
an~$R$-matrix~$R$ for~$k[A]$ and composes~$\phi_R$ with the
projection~$k[A] \to k[A_p]$; this composition~$\O(A) \to k[A_p]$ must
the the trivial (augmentation) homomorphism, by
Lemma~\ref{lem-hopf-hom-O-kP}. It follows that~$\phi(f) \in k[A']$,
for all~$f$, so that precomposing with~$\O(A') \to \O(A)$ gives a
homomorphism of Hopf algebras~$\phi_{R'} \colon O(A') \to k[A']$
corresponding to an~$R$-matrix~$R'$ for~$k[A']$. By inspection, the
following diagram is commutative: 
\[ \begin{CD}
\O(A) @>{\phi_R}>> k[A] \\
@VVV                @AAA \\
\O(A') @>{\phi_{R'}}>> k[A']
\end{CD}
  \]
where the vertical maps are induced by the inclusion~$A' \to A$. It
follows that~$R$ is the image of~$R'$ under the map~$k[A'] \to k[A]$.
\end{proof}

This explains the relation~$F_{21}F^{-1} = 1 \otimes 1$ which we had
observed in example~\ref{ex-zedmodtwo}.

In order to complete the picture, at least when~$k$ is algebraically
closed, there remains only to state the following.

\begin{prop} \label{prop-char-prime-to-p}
  Let~$k$ be algebraically closed of characteristic~$p$, and let~$A$
  be a finite abelian group of order prime to~$p$. Then there is a
  bijection between the set of~$R$-matrices on~$k[A]$ and the bilinear
  forms on the Pontryagin dual of~$A$ with values in~$k^\times$.

Moreover, if~$R = \sum_i \lambda_i a_i \otimes b_i$ with~$\lambda_i
\in k$ and~$a_i, b_i \in A$, then the bilinear form corresponding
to~$R$ is alternating if and only if

\[ u_R := \sum_i \lambda_i a_i^{-1} \, b_i = 1  \, .  \]
\end{prop}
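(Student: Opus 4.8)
The plan is to reduce the statement to standard facts about $R$-matrices on the group algebra of a finite abelian group whose order is prime to the characteristic, using Fourier transform (available here since $k$ is algebraically closed and $|A|$ is invertible). First I would recall that for such an $A$ one has a Hopf algebra isomorphism $k[A] \cong \O_k(\widehat A)$, where $\widehat A = \operatorname{Hom}(A, k^\times)$ is the Pontryagin dual; equivalently $\operatorname{Spec}(k[A]) = \a$ is the constant group scheme on $A$, whose Cartier dual $\a^D$ is the constant group scheme on $\widehat A$. By the same argument as in Proposition~\ref{prop-R-matrix-char-p}, an $R$-matrix on $k[A]$ is the same as a bilinear pairing of group schemes $\a \times \a \to \gm$, i.e.\ (taking $K$-points for $K$ with no nontrivial idempotents, which now includes all the factor fields of $k[A]$) a bilinear map $A \times A \to k^\times$, equivalently a bilinear form on $\widehat A$ with values in $k^\times$ via the canonical identification $A \cong \widehat{\widehat A}$. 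That establishes the claimed bijection.

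Next I would make this bijection completely explicit so that the alternating condition can be read off. Write the idempotent basis $e_\chi$, $\chi \in \widehat A$, of $k[A]$ dual to the characters, so that $a = \sum_\chi \chi(a) e_\chi$ for $a \in A$. The $R$-matrix attached to a bilinear form $\beta$ on $\widehat A$ is then $R_\beta = \sum_{\chi, \psi} \beta(\chi, \psi)\, e_\chi \otimes e_\psi$; conversely, given $R = \sum_i \lambda_i\, a_i \otimes b_i$, expanding each $a_i$, $b_i$ in the idempotent basis recovers $\beta(\chi,\psi) = \sum_i \lambda_i\, \chi(a_i)\, \psi(b_i)$. The form $\beta$ is alternating precisely when $\beta(\chi,\chi) = 1$ for all $\chi \in \widehat A$, i.e.\ when $\sum_i \lambda_i\, \chi(a_i)\, \chi(b_i) = \sum_i \lambda_i\, \chi(a_i b_i) = 1$ for every character $\chi$. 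By Fourier inversion (again using that $|A|$ is invertible in $k$), the element $\sum_i \lambda_i\, a_i b_i \in k[A]$ has all its $\chi$-components equal to $1$ exactly when it equals the ``all-ones'' element $\sum_\chi e_\chi \cdot$(something)... — more cleanly, evaluating a character $\chi$ on $u := \sum_i \lambda_i\, a_i^{-1} b_i$ and on $u^{-1}$-type expressions, the condition $\chi(u) = 1$ for all $\chi$ is equivalent to $u = 1$ in $k[A]$, since characters separate points of $k[A]$ when $|A|$ is prime to $p$. It remains only to check that $a_i b_i$ versus $a_i^{-1} b_i$ is the right normalization: this comes from the definition of an $R$-matrix being an invertible element, together with the standard counit normalization $(\varepsilon \otimes \mathrm{id})(R) = (\mathrm{id} \otimes \varepsilon)(R) = 1$, which forces $\sum_i \lambda_i a_i = \sum_i \lambda_i b_i = 1$ in an appropriate sense and lets one pass between $b_i$ and $a_i^{-1} b_i$ inside the character evaluation; I would spell this out carefully since it is the one place sign/inverse conventions matter.

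The main obstacle I anticipate is precisely this last bookkeeping step: pinning down that the correct invariant is $u_R = \sum_i \lambda_i\, a_i^{-1} b_i$ and not, say, $\sum_i \lambda_i\, a_i b_i$, which requires being careful about whether the bijection sends $\beta$ to $\sum \beta(\chi,\psi)\, e_\chi \otimes e_\psi$ or to its transpose, and about how the antipode interacts with the Fourier transform. Everything else — the existence and explicit form of the bijection, and the translation of ``alternating'' into ``$\chi$ vanishes on the diagonal'' — is formal once the Cartier-duality picture of Proposition~\ref{prop-R-matrix-char-p} is in place. I would therefore organize the write-up as: (1) recall the $R$-matrix $\leftrightarrow$ bilinear-form dictionary via Cartier duality and Fourier transform; (2) write both directions of the bijection in the idempotent basis; (3) compute $\beta(\chi,\chi)$ in terms of the coefficients $\lambda_i, a_i, b_i$; (4) invoke separation of points of $k[A]$ by characters to conclude that $\beta(\chi,\chi) = 1$ for all $\chi$ iff $u_R = 1$, taking care with the inverse in the definition of $u_R$.
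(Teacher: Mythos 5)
Your overall route is the paper's own: the proof in the text also goes through the discrete Fourier transform $k[A]\otimes k[A]\cong \O(\widehat{A}\times\widehat{A})$ (your idempotent-basis dictionary $\beta(\chi,\psi)=\sum_i\lambda_i\chi(a_i)\psi(b_i)$ is exactly that transform), identifies $R$-matrices with functions that are multiplicative in each variable, and reads off the alternating condition. One framing slip before the real point: $\a=Spec(k[A])$ is not the constant group scheme on $A$ but its Cartier dual, i.e.\ (since $|A|$ is prime to $p$ and $k$ is algebraically closed) the constant group scheme on $\widehat{A}$. So the $k$-points of $\a$ are already the characters of $A$, a biadditive pairing $\a\times\a\to\gm$ is directly a bilinear form on $\widehat{A}$, and no identification $A\cong\widehat{\widehat{A}}$ or idempotent-free argument is needed (here $k[A]\cong k^{\widehat{A}}$ is maximally decomposable). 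Your explicit formulas are nevertheless the correct dictionary, so the bijection half of the argument stands.

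The genuine gap is in the step you yourself flag as delicate, and the fix you propose would not work: the counit normalization $(\varepsilon\otimes id)(R)=(id\otimes\varepsilon)(R)=1$ only says $\beta(1,\psi)=\beta(\chi,1)=1$, and it cannot convert your condition $\sum_i\lambda_i a_ib_i=1$ into $u_R=\sum_i\lambda_i a_i^{-1}b_i=1$. What does the job is bilinearity itself: the Fourier transform of $u_R$ is the function $\chi\mapsto\beta(\chi^{-1},\chi)$, and since $\beta$ is bilinear, $\beta(\chi^{-1},\chi)=\beta(\chi,\chi)^{-1}$. Hence, characters separating points of $k[A]$, one gets $u_R=1$ if and only if $\beta(\chi,\chi)=1$ for all $\chi$, i.e.\ if and only if $\beta$ is alternating; this is precisely the paper's one-line conclusion ``$u_R=1$ iff $b(x^{-1},x)=1$ for all $x$''. (Your candidate element $\sum_i\lambda_i a_ib_i$ detects the same condition, but its equivalence with $u_R=1$ again uses bilinearity, not the counit.) With that replacement your write-up coincides with the proof in the paper.
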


(In the proof we recall the relevant definitions. The element~$u_R$ is
called the Drinfeld element of~$R$.)

\begin{proof}
Let~$\widehat{A} = Hom(A, k^\times)$ be the Pontryagin dual
of~$A$. The discrete Fourier transform is the homomorphism 
\[ k[A] \longrightarrow \O(\widehat{A})  \]
defined by~$g \mapsto \hat g$, where~$\hat g (\chi ) = \chi (g)$
for~$\chi \in \widehat{A}$. The hypotheses on~$k$ garantee that the
discrete Fourier transform is an isomorphism of Hopf algebras.

As a consequence of this result, applied in fact to~$A \times A$, we
have a dictionnary between~$k[A] \otimes k[A]$ and~$\O(\widehat{A}\times
\widehat{A})$, that is the algebra of functions~$\widehat{A} \times
\widehat{A} \to k$. An~$R$-matrix for~$k[A]$ thus defines (and can be
defined by) a map
\[ b\colon \widehat{A} \times \widehat{A} \longrightarrow k^\times \, ,   \]
such that~$x \mapsto b(x, y)$ is a homomorphism for fixed~$y$, and~$y
\mapsto b(x, y)$ is a homomorphism for fixed~$x$. It is also immediate
that~$u_R = 1$ if and only if~$b(x^{-1}, x) = 1$ for all~$x\in
\widehat{A}$. This is the conclusion of the Proposition.
\end{proof}

It is instructive to see how this proof compares with the previous
one. The reader who is so inclined will check that, letting~$\a =
Spec(k[A])$ denote the affine group scheme associated to~$k[A]$, then
$R$-matrices on~$k[A]$ are in bijection with bilinear maps~$\a \times
\a \longrightarrow \g_m$. One can prove both
Proposition~\ref{prop-R-matrix-char-p} and
Proposition~\ref{prop-char-prime-to-p} using this language; in the
former case, the correspondence between~$R$ and~$\phi_R$ is
elucidated, while in the latter case the bilinear maps~$\a \times \a
\longrightarrow \g_m$ turn out to be equivalent to bilinear
maps~$\widehat{A} \times \widehat{A} \longrightarrow k^\times$ {\em
  via} the Fourier transform.

\subsection{The main theorem}

Let~$G$ be a finite group, and~$k$ an algebraically closed field of
characteristic~$p$. We let~$\b(G)$ denote the set of pairs~$(A, b)$
where~$A$ is an abelian, normal subgroup of~$G$ of order prime to~$p$,
and~$b$ is an alternating bilinear form~$\widehat{A} \times
\widehat{A} \to k^\times$ which is~$G$-invariant, and non-degenerate. 

Moreover, let~$\Int_k(G)$ denote the group of automorphisms of~$G$
induced by conjugation by elements of~$k[G]$, while~$\Inn(G)$ is the
group of inner automorphisms of~$G$; the quotient~$\Int_k(G)/\Inn(G)$
is a subgroup (which is often trivial in practice) of~$Out(G)$ .

\begin{thm} \label{thm-main-guillotkassel}
There is a map~$\Theta\colon H^2_\ell(G) \to \b(G)$ such that

(a) The subset~$\Theta^{-1}(1)$ is a subgroup of~$H^2_\ell(G)$
isomorphic to~$\Int_k(G) / \Inn(G)$;

(b) The fibres of~$\Theta $ are the left cosets of~$\Theta^{-1}(1)$;

(c) $\Theta $ is surjective if all the subgroups~$A$ involved in the
definition of~$\b(G)$ have odd order. In particular, $\Theta $ is
surjective if~$k$ has characteristic~$2$.

\end{thm}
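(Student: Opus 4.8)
The plan is to follow the argument of \cite{kassel} closely; the only structural change is that the characteristic-zero classification of $R$-matrices on $k[G]$ used there is replaced by Propositions~\ref{prop-R-matrix-char-p} and~\ref{prop-char-prime-to-p}. To construct $\Theta$, represent a class in $H^2_\ell(\O(G)) = Z^2/B^2$ by a Drinfeld twist, that is an invertible invariant $F \in k[G] \otimes k[G]$ satisfying the twist equation ($\dagger$), and form $R_F = F_{21}F^{-1}$. By Radford's theorem (\cite{radford}) this $R$-matrix lies in $k[A_0] \otimes k[A_0]$ for some abelian, normal subgroup $A_0$ of $G$, and by Proposition~\ref{prop-R-matrix-char-p} we may take $A_0$ of order prime to $p$. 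Proposition~\ref{prop-char-prime-to-p} then turns $R_F$ into a bilinear form on $\widehat{A_0}$ with values in $k^\times$; this form is alternating, because the Drinfeld element of any $R$-matrix of the shape $F_{21}F^{-1}$ is trivial, and it is $G$-invariant, because $F$ is. Replacing $A_0$ by the smallest subgroup on which the form is supported makes it non-degenerate, and we set $\Theta([F]) = (A, b) \in \b(G)$. Independence of the representative, and the fact that trivial twists go to the trivial pair, follow exactly as in \cite{kassel} from the observation that every element of $B^2$ is symmetric, hence has $R_F = 1 \otimes 1$.

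Properties (a) and (b) are then formal and transcribe from \cite{kassel} without any new ingredient: $\Theta^{-1}(1)$ is the set of classes of symmetric twists ($F_{21} = F$), which is identified with $\Int_k(G)/\Inn(G)$; and since, by the dictionary of \cite{kassel}, $R_F$ determines the class of $F$ up to multiplication by the class of a symmetric twist, the fibres of $\Theta$ are exactly the cosets of $\Theta^{-1}(1)$. The characteristic enters here only through the shape of the target: $\b(G)$ has been cut down to exclude subgroups of order divisible by $p$, which is precisely the content of Proposition~\ref{prop-R-matrix-char-p}.

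The one genuinely new point is surjectivity, (c). Let $(A, b) \in \b(G)$ with $|A|$ odd. For $\chi, \chi' \in \widehat{A}$ the map $b(\chi, -)$ is a homomorphism $\widehat{A} \to k^\times$, so $b(\chi, \chi')$ is a root of unity of order dividing $|\widehat{A}| = |A|$, hence of odd order; we may therefore define the bilinear form
\[ \beta(\chi, \chi') = b(\chi, \chi')^{(|A|+1)/2} \, , \]
which satisfies $\beta^2 = b$ and, since $b$ is skew, $\beta(\chi', \chi) = \beta(\chi, \chi')^{-1}$. Being given by an $\operatorname{Aut}(\widehat{A})$-equivariant formula, $\beta$ is again $G$-invariant. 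Viewing $\beta$ as a bi-multiplicative, hence cocyclic, $2$-cochain on $\widehat{A}$ with values in $k^\times$, transport it through the discrete Fourier transform $k[A] \cong \O(\widehat{A})$ of Proposition~\ref{prop-char-prime-to-p} to obtain an invertible element $F_\beta \in k[A] \otimes k[A] \subset k[G] \otimes k[G]$. Then $F_\beta$ is a Drinfeld twist (because $\beta$ is a cocycle) and $G$-invariant (because $\beta$ is), so $F_\beta \in Z^2$; moreover $R_{F_\beta} = (F_\beta)_{21}F_\beta^{-1}$ corresponds under Fourier to $(\chi, \chi') \mapsto \beta(\chi, \chi')\beta(\chi', \chi)^{-1} = \beta(\chi, \chi')^2 = b(\chi, \chi')$. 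Since $b$ is non-degenerate on $\widehat{A}$, the minimal-subgroup normalization in the construction of $\Theta$ gives back $A$, so $\Theta([F_\beta]) = (A, b)$. When $k$ has characteristic $2$ every subgroup occurring in $\b(G)$ has order prime to $2$, hence odd, so the above applies to all of $\b(G)$ and $\Theta$ is surjective.

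The main obstacle is exactly the step just performed: producing, from the $G$-invariant form $b$, a $G$-invariant $2$-cocycle on $\widehat{A}$ whose commutator pairing is $b$. For $|A|$ odd one can do this canonically via the square-root formula above, so no equivariance obstruction intervenes; for $|A|$ even no such canonical square root exists, the obstruction to a $G$-equivariant choice need not vanish, and this is precisely why the oddness hypothesis --- automatic when $\operatorname{char} k = 2$ --- is imposed in (c). Everything else (the bookkeeping around Radford's theorem and the minimal subgroup, and the verifications behind (a) and (b)) is a routine adaptation of \cite{kassel}.
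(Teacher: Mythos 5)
Your proposal follows essentially the same route as the paper: the proof is the characteristic-zero argument of \cite{kassel} run verbatim, with Propositions~\ref{prop-R-matrix-char-p} and~\ref{prop-char-prime-to-p} supplying the single new ingredient (the minimal abelian normal subgroup carrying $R_F$ has order prime to $p$, whence the modified target $\b(G)$), and the explicit square-root twist $F_\beta$ you construct for (c) is precisely the surjectivity argument the paper defers to \emph{loc.\ cit.} One cosmetic point: under the Fourier dictionary $(F_\beta)_{21}F_\beta^{-1}$ corresponds to $(\chi,\chi')\mapsto \beta(\chi',\chi)\beta(\chi,\chi')^{-1}=b(\chi,\chi')^{-1}$ rather than $b(\chi,\chi')$, but since $(A,b^{-1})\in\b(G)$ exactly when $(A,b)$ is, replacing $\beta$ by $\beta^{-1}$ (or adjusting the convention) repairs this and surjectivity is unaffected.
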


\begin{proof}
  The proof of Theorem 4.5 in~\cite{kassel} goes through with only one
  simple change, emphasized below. The details of the following
  argument can all be found in {\em loc.\ cit.}

  To construct~$\Theta $, consider a twist~$F$ and the~$R$-matrix~$R_F
  = F_{21} F^{-1}$. There is a unique minimal, abelian, normal
  subgroup~$A$ of~$G$ such that~$R_F \in k[A] \otimes k[A]$, and by
  Proposition~\ref{prop-R-matrix-char-p}, {\em we know that the order
    of~$A$ is prime to~$p$}. By
  Proposition~\ref{prop-char-prime-to-p}, the~$R$-matrix~$R_F$ gives
  rise to a bilinear form~$b$ on~$\widehat{A}$. One can prove that the
  Drinfeld element of~$R_F$ is~$1$ so that~$b$ is alternating, and the
  minimality of~$A$ shows that~$b$ is non-degenerate; the fact
  that~$F$ is assumed to be~$G$-invariant shows that~$b$
  is~$G$-invariant. Thus it makes sense to put~$\Theta (F) = (A, b)$. 

The study of the fibres of the map~$\Theta $ so constructed is
identical to that carried out in~\cite{kassel}. Likewise for the
surjectivity of~$\Theta $ in good cases.
\end{proof}

\begin{ex}
Let~$G$ be a~$p$-group, and let~$k$ have
characteristic~$p$. Then~$\b(G)$ has only one element, by
construction, so we conclude from the Theorem that 
\[ H^2_\ell(\O(G)) = \Int_k(G)/\Inn(G)  \]
in this case. If moreover~$G$ is abelian, it follows
that~$H^2_\ell(\o(G)) = 0$, which we had observed with~$G=\z/2$
earlier. This example also shows that the condition that~$k$ be
algebraically closed cannot be removed.
\end{ex}


\appendix

\section{Cosimplicial groups obtained from cosimplicial sets}

In this Appendix we aim to prove Lemma~\ref{lem-goodwillie}. In
passing we recall the basic definitions of cosimplicial sets. The
material below grew out of an exchange on MathOverflow
which the author had with Tom Goodwillie and Fernando Muro.

Let~$\Delta $ be the simplex category, whose objects are~$\mathbf{0},
\mathbf{1}, \mathbf{2}, \ldots $ where~$\mathbf{n}$ is the ordered
set~$\{ 0, 1, 2, \ldots, n \}$, and whose morphisms are the
non-decreasing maps. A cosimplicial set is simply a functor
from~$\Delta $ to the category of sets. For the convenience of the
reader we recall that the morphisms in~$\Delta $ are compositions of
certain maps~$d^i$ and~$s^j$, satisfying
\[ d^j d^i = d^i d^{j-1} ~\textnormal{for}~ i < j  \]
\[ s^j d^i = \left\{ \begin{array}{l}
d^i s^{j-1} ~\textnormal{for}~ i < j \, , \\
Id ~\textnormal{for}~ i=j, j+1 \, , \\
d^{i-1} s^j ~\textnormal{for}~ i > j+1 \, , 
\end{array}\right.  \]
\[ s^j s^i = s^i s^{j+1} ~\textnormal{for}~ i \le j \, .   \]
Moreover these ``are enough''; that is, one can show that a
cosimplicial set~$X^*$ is precisely defined by a set~$X^n$ for each
integer~$n$ (we say that~$X^n$ is in ``degree~$n$''), together with
maps~$d^i = d^i_n \colon X^n \to X^{n+1}$ and~$s^j = s^j_n \colon X^n
\to X^{n-1}$ (with~$0 \le i \le n+1$ and~$0 \le j \le n-1$) satisfying
the relations above.

Given an integer~$m\ge 0$, there is a cosimplicial set which can be
called the ``free cosimplicial set on one point in degree~$m$'', and
which is given by~$Hom_\Delta (\mathbf{m}, \mathbf{n})$ in
degree~$n$. However, we will instead consider the {\em
  semi}-cosimplicial set~$F_m$ which in degree~$n$ consists of all
{\em injective} maps~$\mathbf{m} \to \mathbf{n}$ in~$\Delta $. Recall
that ``semi-cosimplicial'' means that that~$F_m$ is endowed with
cofaces, but no codegeneracies. Note also that~$F_m^n$ is empty for~$n
< m$. We shall also need to speak of the cosimplicial set which is
reduced to a point in every degree; we call it ``the cosimplicial
point''. 

The next Lemma says that any cosimplicial set is almost free as a
semi-cosimplicial set, except for the presence of cosimplicial points.

\begin{lem}[Goodwillie]
Any cosimplicial set is a disjoint union of cosimplicial points and
copies of~$F_m$ (for various values of~$m$), as semi-cosimplicial set.
\end{lem}

\begin{proof}
The {\em dual} of this statement is probably more familiar to the
reader. Namely in a simplicial set~$S_*$, if we call non-degenerate
the simplices which are not in the image of any degeneracy map, then
any element~$x \in S_*$ can be written uniquely~$x= s_{i_q} \cdots
s_{i_1} y$ with~$i_1 \le i_2 \le \cdots \le i_q$ and~$y$
non-degenerate. 

Dually, in a cosimplicial set~$X$, call an element a {\em root} of~$X$
if it is not in the image of any coface map. Then any~$x \in X^*$ can
(almost tautologically) be written~$ x= d^{i_1} \cdots d^{i_q} y$
where~$y$ is a root and~$i_1 \le i_2 \le \cdots \le i_q$; more
importantly, if~$y$ can be taken in degree~$> 0$, then this writing is
unique; as for roots in degree~$0$, they generate either a
cosimplicial point or a copy of~$F_0$.  We let the proof of this fact
as a (not entirely painless) exercise (start by proving that the
relation~$d^i(x) = d^j(y)$, when~$x$ and~$y$ are roots, implies
that~$x=y$ and either~$i=j$ or the degree of~$x$ is~$0$). We point out
however that the presence of codegeneracies is crucial here (for
example the relations~$s^i d^i = Id$ guarantee that the cofaces are
injective).

The lemma follows immediately from this. The various copies of~$F_m$
are indexed by the set of roots of~$X$; from now on the word ``root''
will exclude the elements of degree~$0$ which generate a cosimplicial
point.
\end{proof}

\begin{coro}
Let~$k$ be any ring, and let~$k[X]^*$ be the cosimplicial~$k$-module
obtained by taking in degree~$n$ the free~$k$-module
on~$X^n$. Then for~$n > 0$
\[ H^n(k[X]^*) = \bigoplus_{r} H^n(k[F_{m_r}]^*) \, .   \]
where~$r$ runs through the roots of~$X$, and~$m_r$ is the degree
of~$r$. 
\end{coro}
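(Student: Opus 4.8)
The plan is to read off both displayed identities as a purely formal consequence of Goodwillie's Lemma; the Corollary itself is bookkeeping, and I will flag at the end where the genuine work lies, namely in the computation the Corollary is designed to enable.

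First I would invoke Goodwillie's Lemma to decompose $X$, as a semi-cosimplicial set, into a disjoint union $X = \coprod_r F_{m_r}$ indexed by the roots $r$ of $X$, with $m_r$ the degree of $r$; concretely this says $X^n = \coprod_r F_{m_r}^n$ in each degree, and that every coface $d^i\colon X^n \to X^{n+1}$ carries $F_{m_r}^n$ into $F_{m_r}^{n+1}$. Then I would apply the ``free $k$-module'' functor: being functorial and sending disjoint unions of sets to direct sums of modules, it turns this into a decomposition $k[X]^n = \bigoplus_r k[F_{m_r}]^n$ in which the cofaces respect the summands. This is the first identity.

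Next I would note that the differential of the cochain complex attached to a semi-cosimplicial $k$-module is $d = \sum_i (-1)^i d^i$, built solely from the cofaces (no codegeneracies enter). Hence the decomposition above is in fact a decomposition of cochain complexes, $(k[X]^*, d) = \bigoplus_r (k[F_{m_r}]^*, d)$. Since the cocycles and coboundaries of a direct sum of complexes of $k$-modules are the direct sums of the respective cocycles and coboundaries, cohomology commutes with this (arbitrary) direct sum, giving $H^n(k[X]^*) = \bigoplus_r H^n(k[F_{m_r}]^*)$ — the second identity — and the Corollary would be proved.

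The main obstacle is not in the Corollary but in the step it sets up: to conclude Lemma~\ref{lem-goodwillie} one still has to show $H^n(k[F_m]^*) = 0$ for every $m$ and every $n > 0$ (in fact $k[F_m]^*$ ought to be acyclic in all degrees). Here the complex is entirely explicit — $k[F_m]^n$ is free on the injective maps $\mathbf{m} \to \mathbf{n}$, equivalently on the $(m+1)$-element subsets of $\{0, 1, \dots, n\}$, with differential the alternating sum of post-compositions with the standard cofaces — so what remains is to produce a contracting homotopy described directly in terms of these subsets. Because $F_m$ has no codegeneracies, the usual ``extra degeneracy'' trick for building such a homotopy is not directly available, and getting this last combinatorial identity right is where the real effort goes.
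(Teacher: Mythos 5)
Your proof of the Corollary is correct and coincides with what the paper intends: the paper treats it as an immediate consequence of Goodwillie's Lemma, exactly as you argue — the decomposition $X=\coprod_r F_{m_r}$ as semi-cosimplicial sets becomes a direct sum of complexes after applying the free $k$-module functor, since the differential involves only cofaces, and cohomology commutes with direct sums. Your closing worry about proving $H^n(k[F_m]^*)=0$ concerns a later step, not this statement, and there the paper does not build a contracting homotopy at all: it invokes Muro's Lemma, realizing each $F_m$ as a summand of a suitable $S^*$ whose dual complex is a Hochschild complex of a product of copies of $k$, killed by the K\"unneth formula.
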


We have used that the cohomology of a cosimplicial point is~$0$ in
degrees~$> 0$. Let us now consider a specific cosimplicial set~$S^*$.

\begin{lem}[Muro]
Let~$S$ be any pointed set, and let~$S^n$ be the cartesian product
of~$n$ copies of~$S$. Define a cosimplicial set structure on~$S^*$ by 
\[ d^0(x_1, \ldots, x_n) = (*, x_1, \ldots, x_n) \, ,   \]
\[ d^i(x_1, \ldots, x_n) = (x_1, \ldots, x_i, x_i, \ldots, x_n) \, ,   \]
\[ d^n(x_1, \ldots, x_n) = (x_1, \ldots , x_n, *) \, ,   \]
while the codegeneracy~$s^i$ omits the~$i$-th entry. (Here~$*$ is the
base-point of~$S$). 

Then for any field~$k$ the cohomology~$H^n(k[S]^*)$ vanishes for~$n >
0$.
\end{lem}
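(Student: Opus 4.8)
The plan is to reinterpret $(k[S]^{*},d)$ as a cobar complex and then exploit that $k[S]$ is a \emph{coseparable} coalgebra. Write $C=k[S]$; the pointed set $S$ turns $C$ into a coalgebra with $\Delta([s])=[s]\otimes[s]$ and $\varepsilon([s])=1$, and the basepoint $g:=[*]$ is then a grouplike element. Unwinding the definitions, $k[S]^{n}=C^{\otimes n}$, with $d^{0}=g\otimes(-)$, with $d^{n+1}=(-)\otimes g$, and with $d^{i}=\mathrm{id}^{\otimes(i-1)}\otimes\Delta\otimes\mathrm{id}^{\otimes(n-i)}$ for $1\le i\le n$. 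Thus $(k[S]^{*},d)$ is exactly the two-sided cobar complex of $C$ based at $g$, and its cohomology is $\mathrm{Cotor}^{*}_{C}(k_{g},k_{g})$, where $k_{g}$ is the one-dimensional $C$-comodule determined by the grouplike $g$.

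The crucial point is then that $C=k[S]=\bigoplus_{s\in S}k[s]$ is, as a comodule over itself, a direct sum of the one-dimensional subcomodules $k[s]$; in particular $k_{g}=k[*]$ is a direct summand of $C$. Since $C$ is the cofree $C$-comodule on a one-dimensional space, this exhibits $k_{g}$ as a \emph{relatively injective} comodule, and therefore $\mathrm{Cotor}^{n}_{C}(k_{g},k_{g})=0$ for $n>0$, which is the assertion. Equivalently, and keeping everything at ground level, one may package the same input as follows: $k[S]$ is coseparable because the ``Kronecker multiplication'' $\pi\colon C\otimes C\to C$, $\pi([s]\otimes[t])=\delta_{s,t}\,[s]$, is a $C$-bicomodule map splitting $\Delta$ (one has $\pi\circ\Delta=\mathrm{id}_{C}$), and out of $\pi$ one manufactures a contracting homotopy $h^{n}\colon C^{\otimes n}\to C^{\otimes(n-1)}$ — essentially ``apply $\pi$ to two adjacent tensor factors'', with counit corrections in the outermost terms — satisfying $dh+hd=\mathrm{id}$ on $C^{\otimes n}$ for every $n\ge 1$.

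The main obstacle is the bookkeeping, in whichever form one picks. The grouplike $g$ occupies a distinguished position among the tensor factors, so an explicit homotopy built from $\pi$ has to treat its end terms separately — this is precisely where $\varepsilon(g)=1$ is used — and verifying $dh+hd=\mathrm{id}$ then reduces to a term-by-term check from $\pi\circ\Delta=\mathrm{id}$ and the bicomodule identities for $\pi$. I expect the coseparability/contracting-homotopy version to be the cleaner one to write out in detail; one could in principle instead decompose $S^{*}$ into the semi-cosimplicial sets $F_{m}$ and invoke the Corollary to reduce to $H^{n}(k[F_{m}]^{*})=0$, but the direct cobar argument looks more economical here.
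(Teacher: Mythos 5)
Your proof is correct, but it takes a genuinely different route from the one in the paper. The paper dualizes the complex: identifying $Hom_k(k[S]^n,k)$ with $R^{\otimes n}$, where $R$ is the algebra of $k$-valued functions on $S$ (an identification which implicitly uses that $S$ is finite, as it is in the intended application $S=G$), the dual of $(k[S]^*,d)$ is recognized as the Hochschild complex of $R$ with coefficients in $k$ via evaluation at the basepoint; since $R$ is a product of copies of $k$, that homology vanishes in positive degrees, and exactness of $Hom_k(-,k)$ over a field transports the vanishing back to $H^n(k[S]^*)$. You stay on the coalgebra side instead: you recognize $(k[S]^*,d)$ as the two-sided cobar complex of the grouplike coalgebra $C=k[S]$ at $g=[*]$, so that its cohomology is $\mathrm{Cotor}^*_C(k_g,k_g)$, and you kill it because $k_g$ is a direct summand of the cofree comodule $C=\bigoplus_s k_s$ — equivalently by coseparability of $C$, with cointegral $\pi([s]\otimes[t])=\delta_{s,t}[s]$. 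The two arguments rest on the same phenomenon, yours being literally dual to the paper's (coseparability of $k[S]$ versus separability of $k^S$), but yours buys something: it needs no finiteness of $S$, and in its contracting-homotopy form it works over any commutative ring $k$, whereas the paper's version is shorter if one takes the vanishing of Hochschild (co)homology of split semisimple algebras for granted. One simplification for your write-up: no counit corrections are needed at all; the map $h([s_1]\otimes\cdots\otimes[s_n])=\delta_{s_1,*}\,[s_2]\otimes\cdots\otimes[s_n]$ already satisfies $dh+hd=\mathrm{id}$ in every degree $n\ge 1$ (the only identity used in the check is $\delta_{s_1,*}[s_1]=\delta_{s_1,*}[*]$, together with the cancellation of the $i$-th coface of $h(x)$ against the image under $h$ of the $(i+1)$-st coface of $x$), and this is probably the quickest complete version of your argument.
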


\begin{proof} The trick is to consider the dual chain complex. Let~$V=
  k[S]^1$, so that~$k[S]^n= V^{\otimes n}$, and let~$R= Hom_k(V,
  k)$. Then~$R$ can be seen as the vector space of~$k$-valued
  functions on~$S$, and as such is a ring. We have~$Hom_k(V^{\otimes
    n}, k) = R^{\otimes n}$.

If we now inspect the chain complex~$Hom_k(k[S]^*, k)$, we recognize
the Hochschild complex of the ring~$R$ (with values in
the~$R$-module~$k$, the module structure being given by evaluation at
the base point of~$S$). Since~$R$ is a product of~$N$ copies of~$k$,
where~$N$ is the cardinal of~$S$, the K\"unneth formula shows then
that~$H_n( Hom_k(k[S]^*, k) )= 0$ for~$n > 0$. Therefore, we also have~$H^n(
k[S]^*) = 0$ for~$n > 0$.
\end{proof}

\begin{coro}
Let~$X^*$ be any cosimplicial set. Then the
group~$H^n(k[X]^*)$ vanishes for~$n > 0$.
\end{coro}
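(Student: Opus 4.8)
The plan is to deduce the final Corollary from the two Lemmas (Goodwillie's and Muro's) and the intervening Corollary already in the text. The point is that Muro's Lemma handles one very specific cosimplicial set, while Goodwillie's structural result lets us reduce an \emph{arbitrary} cosimplicial set to that case, up to the semi-cosimplicial decomposition. So the main work is organizational rather than computational.

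First I would observe that by Goodwillie's Lemma, any cosimplicial set $X^*$ decomposes, as a semi-cosimplicial set, into a disjoint union of copies of the free objects $F_m$ indexed by the roots of $X$. Applying the free $k$-module functor (which turns disjoint unions into direct sums) gives $k[X]^n = \bigoplus_r k[F_{m_r}]^n$, compatibly with the cofaces; since the differential of the associated cochain complex is the alternating sum of the cofaces only, the codegeneracies are irrelevant here, and we get $H^n(k[X]^*) = \bigoplus_r H^n(k[F_{m_r}]^*)$ — this is exactly the Corollary already displayed after Goodwillie's Lemma. Therefore it suffices to prove that $H^n(k[F_m]^*) = 0$ for all $m \ge 0$ and all $n > 0$.

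Next I would connect $F_m$ to the cosimplicial set $S^*$ appearing in Muro's Lemma. The natural move is to take $S$ to be a pointed set of an appropriate size and identify $k[F_m]^*$ — or rather the whole cohomology computation — with a summand of $k[S]^*$ for a suitable $S$; concretely, $S^*$ with its given cosimplicial structure, being built from cartesian powers of a pointed set, is itself a cosimplicial set whose semi-cosimplicial decomposition (via Goodwillie again) involves precisely the $F_m$'s, and one reads off that each $k[F_m]^*$ occurs as a direct summand of $k[S]^*$ once $S$ is large enough (e.g.\ $|S| \ge 2$ already produces roots in every degree $m$, or one simply takes $S$ with enough points to realize the root in degree $m$). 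Since Muro's Lemma gives $H^n(k[S]^*) = 0$ for $n > 0$ and cohomology commutes with the finite (or arbitrary) direct sum decomposition, each summand $H^n(k[F_m]^*)$ must vanish for $n > 0$ as well. Combining with the previous paragraph yields $H^n(k[X]^*) = 0$ for $n > 0$, which is the claim. In particular this also establishes Lemma~\ref{lem-goodwillie} as stated.

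The step I expect to be the main obstacle is the precise matching in the second paragraph: making rigorous that every $F_m$ genuinely appears as a semi-cosimplicial direct summand of the specific $S^*$ from Muro's Lemma (and not some twisted or shifted variant), so that the vanishing transfers cleanly. One has to check that the roots of $S^*$ — the tuples not in the image of any coface — exist in each degree $m$ and that the sub-semi-cosimplicial set they generate is honestly isomorphic to $F_m$; this is the kind of bookkeeping that Goodwillie's Lemma was set up to handle, but it still requires care to invoke correctly. Everything else — the free-module functor preserving coproducts, cohomology of a direct sum being the direct sum of cohomologies — is routine.
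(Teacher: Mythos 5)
Your proposal is correct and takes essentially the same route as the paper: reduce to the free pieces $F_m$ via the decomposition Corollary, then kill $H^n(k[F_m]^*)$ for $n>0$ by exhibiting $k[F_m]^*$ as a direct summand of $k[S]^*$ for a suitable pointed $S$, which the paper does by taking $S$ with $m+1$ elements $x_0=*,x_1,\ldots,x_m$ so that $(x_1,\ldots,x_m)$ is a root in degree $m$. The only quibble is your parenthetical that $|S|\ge 2$ already produces roots in every degree -- this fails in degree $2$, since a root must have non-basepoint first and last entries and distinct consecutive entries -- but your stated fallback (take $S$ with enough points to realize a root in degree $m$) is exactly the paper's choice, so the argument stands.
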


\begin{proof}
Since this holds for the example~$S^*$ of the Lemma, we gather from the
previous Corollary that~$H^n(k[F_m]^*) = 0$ for~$n > 0$ whenever~$m$
is one of those integers such that~$F_m$ shows up in the decomposition
of~$S^*$. However, whatever the integer~$m$, if suffices to take~$S$
with~$m+1$ elements~$x_0= *, x_1, \ldots, x_m$ to obtain a root~$(x_1,
\ldots, x_m)$ in degree~$m$ for the cosimplicial set~$S^*$.

We conclude that $H^n(k[F_m]^*) = 0$ for all~$m \ge 0$ and all~$n >
0$. Thus from the previous Corollary, $H^n(k[X]^*) = 0$ for any~$X^*$.
\end{proof}

\bibliography{myrefs}

\newcommand{\noopsort}[1]{} \newcommand{\printfirst}[2]{#1}
  \newcommand{\singleletter}[1]{#1} \newcommand{\switchargs}[2]{#2#1}
\providecommand{\bysame}{\leavevmode\hbox to3em{\hrulefill}\thinspace}
\providecommand{\MR}{\relax\ifhmode\unskip\space\fi MR }
\providecommand{\MRhref}[2]{%
  \href{http://www.ams.org/mathscinet-getitem?mr=#1}{#2}
}
\providecommand{\href}[2]{#2}
\begin{thebibliography}{Rad93}

\bibitem[BC06]{bichon}
Julien Bichon and Giovanna Carnovale, \emph{Lazy cohomology: an analogue of the
  {S}chur multiplier for arbitrary {H}opf algebras}, J. Pure Appl. Algebra
  \textbf{204} (2006), no.~3, 627--665. \MR{2185622 (2006i:16052)}

\bibitem[GK10]{kassel}
Pierre Guillot and Christian Kassel, \emph{Cohomology of invariant {D}rinfeld
  twists on group algebras}, Int. Math. Res. Not. IMRN (2010), no.~10,
  1894--1939. \MR{2646345 (2011i:16016)}

\bibitem[GKM]{akira}
Pierre Guillot, Christian Kassel, and Akira Masuoka, \emph{Twisting algebras
  using non-commutative torsors -- explicit computations}, To appear.

\bibitem[Rad93]{radford}
David~E. Radford, \emph{Minimal quasitriangular {H}opf algebras}, J. Algebra
  \textbf{157} (1993), no.~2, 285--315. \MR{1220770 (94c:16052)}

\bibitem[Swe68]{sweedler}
Moss~Eisenberg Sweedler, \emph{Cohomology of algebras over {H}opf algebras},
  Trans. Amer. Math. Soc. \textbf{133} (1968), 205--239. \MR{0224684 (37
  \#283)}

\end{thebibliography}
\bibliographystyle{amsalpha}

\end{document}